\newtheorem{thm}{Theorem}[section]
\newtheorem{claim}[thm]{Claim}
\newtheorem{lem}[thm]{Lemma}
\newtheorem{prop}[thm]{Proposition}
\newtheorem{defi}[thm]{Definition}
\newtheorem{rek}[thm]{Remark}
\DeclareMathOperator{\spann}{span}
\DeclareMathOperator{\sgn}{sgn}
\numberwithin{equation}{section}
\DeclareFontFamily{U}{mathx}{}
\DeclareFontShape{U}{mathx}{m}{n}{<-> mathx10}{}
\DeclareSymbolFont{mathx}{U}{mathx}{m}{n}
\DeclareMathAccent{\widehat}{0}{mathx}{"70}
\DeclareMathAccent{\widecheck}{0}{mathx}{"71}
\newcommand{\BB}{\mathcal{B}}
\newcommand{\yy}{\mathbf{y}}
\newcommand{\NN}{\mathbb{N}}
\newcommand{\cF}{\mathcal{F}}
\newcommand{\cY}{\mathcal{Y}}
\newcommand{\floor}[1]{\left\lfloor #1 \right\rfloor}
\newcommand{\ceil}[1]{\left\lceil #1 \right\rceil}
\begin{document}

\title[]{On sequential greedy-type bases}

\author{Miguel Berasategui}
\address{M. Berasategui, IMAS - UBA - CONICET - Pab I, Facultad de Ciencias
Exactas y Naturales, Universidad de Buenos Aires, (1428), Buenos Aires, Argentina}
\email{mberasategui@dm.uba.ar}

\author{Pablo M. Bern\'a}
\address{P. M. Bern\'a, Departamento de Matemáticas, CUNEF Universidad, Madrid 28040, Spain} 
\email{pablo.berna@cunef.edu}

\author{Hung Viet Chu}
\address{H. V. Chu, Department of Mathematics, Texas A\&M University, TX 77843, USA}
\email{hungchu1@tamu.edu}

\begin{abstract}
It is known that a basis is almost greedy if and only if the thresholding greedy algorithm gives essentially the smallest error term compared to errors from projections onto intervals or in other words, consecutive terms of $\mathbb{N}$. In this paper, we fix a sequence $(a_n)_{n=1}^\infty$ and compare the TGA against projections onto consecutive terms of the sequence and its shifts. We call the corresponding greedy-type condition the $\mathcal{F}_{(a_n)}$-almost greedy property. Our first result shows that the $\mathcal{F}_{(a_n)}$-almost greedy property is equivalent to the classical almost greedy property if and only if $(a_n)_{n=1}^\infty$ is bounded. Then we establish an analog of the result for the strong partially greedy property. Finally, we show that under a certain projection rule and conditions on the sequence $(a_n)_{n=1}^\infty$, we obtain a greedy-type condition that lies strictly between the almost greedy and strong partially greedy properties.
\end{abstract}

\subjclass[2020]{41A65; 46B15}

\keywords{Thresholding Greedy Algorithm; consecutive almost greedy; bases}

\thanks{Miguel Berasategui was partially supported by the grants CONICETPIP 11220200101609CO y ANPCyT PICT 2018-04104 (Consejo Nacional de Investigaciones Cient\'{i}ficas y T\'{e}cnicas y Agencia Nacional de Promoci\'{o}n de la Investigaci\'{o}n, el Desarrollo Tecnol\'{o}gico y la Innovaci\'{o}n, Argentina). P. M. Bern\'{a} was partially supported by the Grant PID2022-1422ONB-I00 (Agencia Estatal de Investigaci\'{o}n, Spain)}

\maketitle

\section{Introduction}\label{intro}
In the setting of abstract normed vector spaces, the \textit{thresholding greedy algorithm} (TGA) arises as one of the most natural approximation methods. The TGA was first formalized by Konyagin and Temlyakov \cite{KT1999} as follows: let $\mathcal{B} = (e_n)_{n=1}^\infty$ be a basis of the space $(X, \|\cdot\|)$ and let $x\in X$; an $m$-term approximation $G_m(x)$ of $x$ is the projection of $x$ onto $m$ basis vectors whose coefficients, with respect to $x$, are the largest. To evaluate the efficiency of this algorithm, Konyagin and Temlyakov \cite{KT1999} compared the error $\|x-G_m(x)\|$ against the smallest possible error, denoted by $\sigma_m(x)$, from approximation by any $m$-term linear combination of basis vectors. If there is a $C > 0$ such that 
$$\|x-G_m(x)\|\ \leqslant\ C\sigma_m(x), \mbox{ for all }x\in X\mbox{ and }m\in \mathbb{N},$$ then the basis $\mathcal{B}$ is said to be \textit{greedy}. Later, Dilworth et al. \cite{DKKT2003} compared $\|x-G_m(x)\|$ against the smallest error, denoted by $\widetilde{\sigma}_m(x)$, from projections of $x$ onto any $m$ basis vectors. They called a basis \textit{almost greedy} if there is a $C>0$ satisfying 
$$\|x-G_m(x)\|\ \leqslant\ C\widetilde{\sigma}_m(x),\mbox{ for all }x\in X\mbox{ and }m\in \mathbb{N}.$$ Clearly, $\sigma_m(x)\leqslant \widetilde{\sigma}_m(x)$ because projections form a proper subset of linear combinations; hence, a greedy basis is almost greedy. However, an almost greedy basis is not necessarily greedy (see \cite{KT1999} or \cite[Example 10.2.9]{AK}).

The literature has shown many pleasant and often unexpected equivalences among different greedy-type properties. For example, 
Dilworth et al. showed that enlarging greedy sums linearly in the definition of greedy bases brings us to the realm of almost greedy bases (see \cite[Theorem 3.3]{DKKT2003}). Extension of this result was recently obtained in \cite{Ch}. Moreover, Dilworth et al. \cite{DKK2003} introduced the so-called \textit{semi-greedy} bases and proved the equivalence between semi-greedy and almost greedy Schauder bases under the assumption of finite cotype, a condition later removed by the second named author \cite{B1}. Then the first named author and Lassalle extended the equivalence to Markushevich bases \cite{BL2021}. Last but not least, there came the two surprising results by Albiac and Ansorena: first, a basis is \textit{$1$-quasi-greedy} if and only if it is $1$-suppression unconditional \cite{AA2016}, where $1$-quasi-greediness is the property that $\|G_m(x)\|\leqslant \|x\|$ for all $x\in X$ and $m\in \mathbb{N}$; second, a basis is almost greedy with constant $1$ if and only if it has the so-called \textit{Property (A)} \cite{AA2017}. The above-mentioned equivalence between $1$-quasi-greediness and $1$-suppression unconditionality is particularly interesting as it connects a nonlinear property with a linear property. In the same vein, recent work of the three authors Albiac, Anserona, and the first named author \cite{AAB2023a, AAB2023b}  established equivalences among weaker versions of the quasi-greedy property and weaker forms of unconditionality. 

Previously, the authors of the present paper gave an order-dependent definition of almost greedy bases, which turned out to be equivalent to the classical order-free definition \cite{BBCa, BBCb}. In particular, we call a basis \textit{consecutive almost greedy} if the error $\|x-G_m(x)\|$ is essentially bounded by the error $\|x-P_I(x)\|$, where $P_I(x)$ is the projection of $x$ onto an arbitrary interval $I\subset \mathbb{N}$ with $|I| = m$. On the surface, the definition of consecutive almost greedy bases is a weaker notion than the classical definition of almost greedy bases. However, \cite[Theorem 2.8]{BBCa} states that a basis is almost greedy if and only if it is consecutive almost greedy. This is unexpected, because unlike the almost greedy property, the consecutive almost greedy property, which involves projections onto intervals, depends on how we order basis vectors. 
The case for \textit{consecutive greedy basis} is later studied in \cite{BBCb}, where we showed that a basis is consecutive greedy if and only if it is almost greedy and Schauder. Hence, for Schauder bases, consecutive greedy and almost greedy bases coincide. In general, consecutive greedy bases are not necessarily greedy (see \cite[Remark 3.9]{BBCb}). This is where the consecutive greedy case is different from the consecutive almost greedy case.

Our main goal in this paper is to study the interval version with respect to an arbitrary sequence. In this more general setting, we answer the question for which sequences, the consecutive greedy-type notion is equivalent to the classical greedy-type notion. This extends and offers new insight about the mechanics behind \cite[Theorem 2.8]{BBCa} and \cite[Theorem 3.7]{BBCb}. 

\section{Preliminaries and main results}
We will work in the setting of $p$-Banach spaces. Let $X$ be a separable, infinite dimensional $p$-Banach space $(0 < p \leqslant 1)$ over the field $\mathbb{F} = \{\mathbb{R}, \mathbb{C}\}$. Let $X^*$ be the dual of $X$. 

\begin{defi}\normalfont\label{definitionbasis}
A basis $\mathcal{B}$ of $X$ is a sequence $(e_n)_{n=1}^\infty$ in $X$, which has three properties: first, the span of $(e_n)_n$ is dense in $X$; second, there is a unique sequence of functionals $(e^*_n)_n\subset X^*$ such that $e^*_m(e_n) = \delta_{m,n}$ for all $m, n\in \mathbb{N}$; third, the system $(e_n, e_n^*)_n$ is semi-normalized, i.e., 
$$0\ <\ c_1\ :=\ \inf_n \{\|e_n\|, \|e_n^*\|\}\ \leqslant\ \sup_n \{\|e_n\|, \|e_n^*\|\}\ =: \ c_2 \ <\ \infty.$$

If a basis $\mathcal{B}$ has $\overline{\spann(e_n^*)_n}^{w*} = X^*$,  then $\mathcal{B}$ is called a Markushevich basis. If, additionally, there is $C > 0$ such that 
$$\left\|\sum_{n=1}^m e_n^*(x)e_n\right\|\ \leqslant\ C\|x\|, \mbox{ for all }x\in X \mbox{ and }m\in \mathbb{N},$$
then $\mathcal{B}$ is called a Schauder basis. 
\end{defi}

We discuss several important operators. For $m\geqslant 0$, the $m$\textsuperscript{th} partial sum operator $S_m : X\rightarrow X$ is $\sum_{n=1}^m e_n^*(x)e_n$. Hence, Schauder bases have uniformly bounded partial sum operators. Partial sum operators belong to the more general collection of projections. Let $\mathbb{N}^{<\infty}$ consist of all finite subsets of $\mathbb{N}$. For $A\in \mathbb{N}^{<\infty}$, the projection $P_A: X\rightarrow X$ is defined as $P_A(x) = \sum_{n\in A}e_n^*(x)e_n$. Next, we define, for each $m\in \mathbb{N}$, the greedy sum operator, $G_m: X\rightarrow X$:  let $\Lambda_m(x)$ be a set satisfying 
\begin{equation}\label{e101}|\Lambda_m(x)| \ =\ m\mbox{ and }\min_{n\in \Lambda_m(x)}|e_n^*(x)|\ \geqslant\ \max_{n\notin \Lambda_m(x)}|e_n^*(x)|,\end{equation}
then $G_m(x) = \sum_{n\in \Lambda_m(x)}e_n^*(x)e_n$. Unlike $P_A$, $G_m$ is not linear, and $G_m(x)$ depends on the choice of $\Lambda_m(x)$ and thus, may not be unique. However, uniqueness is not a concern thanks to the standard perturbation argument. Using the above notation, we can write $\sigma_m(x)$ and $\widetilde{\sigma}_m(x)$ mentioned in Section \ref{intro} as 
\begin{align}
\sigma_m(x) &\ :=\ \inf\left\{\left\|x-\sum_{n\in A}a_ne_n\right\|\,:\, |A| = m, (a_n)_{n\in A}\subset \mathbb{F}\right\}\mbox{ and }\nonumber\\
\label{e50}\widetilde{\sigma}_m(x)&\ :=\ \inf\left\{\left\|x-P_A(x)\right\|\,:\, |A| = m\right\},
\end{align}
respectively. As observed in \cite[Lemma 3.4]{AA2017}, the requirement ``$|A| = m$" in \eqref{e50} can be replaced by ``$|A| \leqslant m$" without changing the constant of the almost greedy property. 

A basis is said to be $C$-suppression quasi-greedy for some $C > 0$ if 
$$\|x-G_m(x)\|\ \leqslant\ C\|x\|, \mbox{ for all }x\in X, m\in \mathbb{N}, \mbox{ and }G_m(x).$$
Besides greedy-type bases mentioned in Section \ref{intro}, another well-known type is the so-called \textit{strong partially greedy} bases (see \cite{BBL2021, DKKT2003}). A basis is said to be strong partially greedy if there is $C > 0$ such that 
$$\|x-G_m(x)\|\ \leqslant\ C\inf\left\{\|x-S_n(x)\|\,:\, 0\leqslant n\leqslant m\right\}.$$

Let $\mathcal{F}$ be a nonempty, infinite collection of  finite subsets of $\mathbb{N}$ and $$\mathcal{PF}\ :=\ \{A\subset \NN: \exists B\in \mathcal{F}\, :\, A\subset B\}.$$ Unlike \cite{BC}, our main results do not assume that $\mathcal{F}$ is hereditary. With an abuse of notation, let $\min \mathcal{F} := \min_{F\in\mathcal{F}}|F|$. We define $\mathcal{F}$-almost greedy bases, an extension of the classical almost greedy bases. 

\begin{defi}(\cite[Definition 1.6]{BC})\normalfont\label{ }
A basis is said to be $\mathcal{F}$-almost greedy if there exists $\Delta \geqslant 1$ such that 
\begin{equation}\label{e4}\|x-G_m(x)\| \ \leqslant\ \Delta\inf_{F\in \mathcal{F}, |F|\leqslant m}\|x-P_{F}(x)\|, \mbox{ for all } x\in X, m\in \mathbb{N}_{\geqslant \min \mathcal{F}}, \mbox{ and } G_m(x).\end{equation}
\end{defi}

Clearly, an almost greedy basis is $\mathcal{F}$-almost greedy. Our first result gives sufficient conditions on $\mathcal{F}$ such that a basis is $\mathcal{F}$-almost greedy if and only if it is almost greedy. For the definition of the $N$-covering and $N$-sliding properties, see Definitions \ref{definitionNcovering} and \ref{definition subsliding}. 

\begin{thm}\label{m1}Fix an $N$-covering and $M$-sliding family $\mathcal{F}$. Then a basis $\mathcal{B}$ is $\mathcal{F}$-almost greedy if and only if it is almost greedy. 
\end{thm}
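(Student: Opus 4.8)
The plan is to prove the two implications separately, the forward one (from $\mathcal{F}$-almost greedy to almost greedy) being the substantive direction since the reverse is immediate from the inclusion $\{P_F : F \in \mathcal{F}\} \subset \{P_A : A \in \NN^{<\infty}\}$. For the forward direction, I would use the well-known characterization that a basis is almost greedy if and only if it is quasi-greedy and \emph{democratic} (or equivalently satisfies the Property (A)-type estimates; in the $p$-Banach setting one uses the analog from \cite{AAB2023a} or the standard reference), so it suffices to extract quasi-greediness and democracy from the $\mathcal{F}$-almost greedy hypothesis \eqref{e4}.

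Quasi-greediness should come almost for free: given $x$ and $m$, one wants to bound $\|x - G_m(x)\|$ by $C\|x\|$. Applying \eqref{e4} with a suitable $F \in \mathcal{F}$ requires producing, for the relevant support size, a set $F$ for which $\|x - P_F(x)\|$ is controlled by $\|x\|$; here is where the $N$-covering property enters — it guarantees that every finite subset of $\NN$ (in particular a window containing $\supp(x)$, after a standard reduction to finitely supported $x$) is covered by at most $N$ members of $\mathcal{F}$, so one can iterate the projection estimate a bounded number of times, at the cost of a constant depending on $N$ and the modulus of concavity of the $p$-norm. The key technical point is to control $\|x - P_F(x)\|$ when $F$ "misses" part of the support; I expect to choose $F$ disjoint from a greedy set of $x$ so that $x - P_F(x)$ has the same large coefficients as $x$ and then apply \eqref{e4} to $x$ itself with the set $F$ chosen to contain $\supp(G_m(x))$ — the $N$-covering property is exactly what lets us find such an $F$ (or a bounded union realizing it).

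For democracy, the role of the $M$-sliding property becomes central. Given two finite sets $A, B$ with $|A| = |B| = k$, I want $\|\mathbbm{1}_A\| \lesssim \|\mathbbm{1}_B\|$ (suitably interpreted, e.g. with signs, $\|\sum_{n\in A} \varepsilon_n e_n\|$). The standard strategy is: build a vector $x$ whose leading coefficients sit on $A$ and whose "tail" sits on a translate of $B$ placed far to the right; then $G_k(x)$ recovers the $A$-part, while choosing $F \in \mathcal{F}$ to cover the tail translate of $B$ lets \eqref{e4} bound $\|x - G_k(x)\| = \|\text{tail}\|$ from above, and a reverse application (projecting onto $A$ via some $F' \in \mathcal{F}$, or onto the tail) bounds things the other way. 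The $M$-sliding property is what guarantees that an arbitrary $k$-set $B$ can be realized, up to a shift, as a subset of a member of $\mathcal{F}$ (so that $\|\mathbbm{1}_B\| \approx \|\mathbbm{1}_{\text{shift of }B}\|$ by semi-normalization plus quasi-greediness / the shift-invariance of the relevant seminorms — actually this last step needs care, since shifts are not isometries for a general basis; one must instead route the comparison through $x$ and the greedy algorithm, never comparing two shifted sets directly).

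The main obstacle I anticipate is precisely this last subtlety: unlike the translation-invariant setting where "sliding" is harmless, here shifting the support of a vector genuinely changes its norm, so the $M$-sliding property cannot be used to literally move sets around. The argument must instead keep everything anchored inside a single vector $x$ and use only the greedy algorithm and projections onto members of $\mathcal{F}$, exploiting covering to handle the head and sliding to place tails far enough out that they interact cleanly with a member of $\mathcal{F}$. Getting the quantifiers right — which $F \in \mathcal{F}$ to pick, how large the gap between head and tail must be, and ensuring the support size stays $\leqslant m$ so \eqref{e4} applies — is the delicate bookkeeping that constitutes the real content of the proof; the $p$-convexity constants propagate routinely once the combinatorial skeleton is fixed.
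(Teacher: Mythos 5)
Your top-level plan -- reduce to ``quasi-greedy plus democratic'' via Theorem~\ref{dkkt} -- is the right one and matches the paper. But the roles you assign to the two hypotheses are wrong, and the democracy argument you sketch does not close. For quasi-greediness, the $N$-covering property is not what does the work, and the iteration you describe would not terminate in a bound: a projection $P_F$ for $F\in\mathcal F$ is not in general bounded, and composing \eqref{e4} across several covering sets $F_1,\dots,F_N$ does not telescope to an estimate of $\|x-G_m(x)\|$ by $\|x\|$. The paper's mechanism is much simpler: after the short preliminary reduction showing one may assume $\emptyset\in\mathcal F$, quasi-greediness is immediate from taking $F=\emptyset$ in \eqref{e4}, with constant $\Delta$. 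Covering plays no role there.

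For democracy, you propose to place a translate of $B$ far to the right and then compare via a greedy sum; you yourself note that translation is not an isometry, and indeed that plan is circular -- but you then leave the resolution as ``delicate bookkeeping'' without supplying it. The paper's argument never moves $B$. Given $A\subset A_1\in\mathcal F$ and arbitrary $B$ with $|B|\geqslant|A|$, the $M$-sliding property produces $F\in\mathcal F$ with $|F\cap\{1,\dots,M\}|\leqslant M$ and $|F|\geqslant M$ for $M$ large, so that one can choose a surrogate set $F_1\subset F$ with $|F_1|=|A|$ and $F_1>A_1\cup B$. The comparison is then a two-step chain entirely inside $\mathcal F$: first, apply \eqref{e4} to $1_{\varepsilon,A}+1_{A_1\setminus A}+1_{F_1}$, projecting onto $A_1\in\mathcal F$, to get $\|1_{\varepsilon,A}\|\leqslant\Delta\|1_{F_1}\|$; second, split $F_1=F_2\sqcup F_3$ with $|F_j|\leqslant|B\setminus F|$ and apply suppression quasi-greediness to $1_{F_j}+1_{F\setminus F_j}+1_{\delta,B\setminus F}$ to get $\|1_{F_j}\|\leqslant\Delta^2\|1_{\delta,B}\|$. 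This yields $\mathcal{PF}$-superdemocracy. Only at that point is $N$-covering used, and it is used for democracy (not quasi-greediness): it lets you decompose an arbitrary $A$ into at most $N$ pieces lying in members of $\mathcal F$ plus a leftover of size at most $N$, and apply the $\mathcal{PF}$-estimate piecewise with $p$-convexity. So both hypotheses serve democracy, and your proposal as written has a genuine gap precisely at the step you flagged as delicate.
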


When $\mathcal{F}$ is the collection of intervals in $\mathbb{N}$, the $\mathcal{F}$-almost greedy property is what we call the consecutive almost greedy property (\cite[Definition 2.7 and Theorem 2.8]{BBCa}). Our next result is motivated by the following theorem from \cite{BBCa}
\begin{thm}(\cite[Theorem 2.8]{BBCa})
A basis is consecutive almost greedy if and only if it is almost greedy. 
\end{thm}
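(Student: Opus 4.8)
The plan is to read this statement off Theorem~\ref{m1}. When $\mathcal{F}$ is taken to be the family of all nonempty finite intervals of $\mathbb{N}$, the right-hand side of \eqref{e4} becomes $\inf\{\|x-P_I(x)\|: I\subset\mathbb{N}\text{ an interval},\ |I|\le m\}$, which is exactly the quantity appearing in the definition of the consecutive almost greedy property; so ``$\mathcal{F}$-almost greedy'' and ``consecutive almost greedy'' coincide for this $\mathcal{F}$, and the theorem will follow once we check that this particular $\mathcal{F}$ satisfies the hypotheses of Theorem~\ref{m1}. The forward implication needs nothing: intervals of length at most $m$ are in particular sets of cardinality at most $m$, so $\inf_{|I|\le m}\|x-P_I(x)\|\ge\widetilde{\sigma}_m(x)$, and any almost greedy estimate $\|x-G_m(x)\|\le C\widetilde{\sigma}_m(x)$ passes to \eqref{e4} with the same $C$.

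For the converse I would verify the two combinatorial conditions for the family of intervals. The $N$-covering property (Definition~\ref{definitionNcovering}) should hold with the smallest admissible parameter: $\mathbb{N}=\bigcup_{k}[1,k]$, intervals of every cardinality up to $m$ are available, and any subinterval of an interval is again a member of $\mathcal{F}$. The $M$-sliding property (Definition~\ref{definition subsliding}) is where the key feature of intervals enters, namely translation-stability: if $I=[a,b]$ then $I+j=[a+j,b+j]\in\mathcal{F}$, and for fixed length $k$ the translates of $[1,k]$ cover $\mathbb{N}$ with overlap bounded independently of $k$. Granting both, Theorem~\ref{m1} applies and yields the equivalence.

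I expect the real content — whether one routes through Theorem~\ref{m1} or argues directly — to be the converse implication, i.e. extracting the classical quasi-greedy-plus-democratic characterization of almost greediness from control by interval projections alone. Quasi-greediness is the easy half: testing \eqref{e4} against an interval of length $m$ lying entirely to the right of the support of a finitely supported $x$ makes $P_I(x)=0$, hence $\|x-G_m(x)\|\le\Delta\|x\|$ there, and a standard perturbation/density argument removes the finiteness assumption. Democracy is the hard part, and the reason the covering and sliding conditions are needed: interval projections only detect interval-shaped index sets, so to compare $\|\sum_{n\in A}e_n\|$ with $\|\sum_{n\in B}e_n\|$ for arbitrary $m$-element sets $A,B$ one must use the structure of $\mathcal{F}$ to split $A$ and $B$ into boundedly many interval pieces of comparable cardinality and then recombine the estimates by means of the quasi-greedy inequalities $\|\sum_{n\in S}a_ne_n\|\lesssim\max_{n\in S}|a_n|\,\|\sum_{n\in S}e_n\|$ and $\|\sum_{n\in S}e_n\|\lesssim(\min_{n\in S}|a_n|)^{-1}\|\sum_{n\in S}a_ne_n\|$. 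This is precisely the mechanism that Theorem~\ref{m1} isolates in the abstract setting, which is why the consecutive case drops out as a corollary.
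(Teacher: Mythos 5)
Your route — specializing Theorem~\ref{m1} to the family of intervals, i.e.\ $\mathcal{F}_{(a_n)}$ with $a_n\equiv 1$ — is exactly how the present paper would rederive the quoted result (it is the $M=1$ case of Theorem~\ref{theorembounded}), and the forward implication and the reduction to verifying the covering and sliding conditions are both correct. The one thing to tighten is the verification of those two conditions: for intervals the $N$-covering property holds with $N=1$ simply because any nonempty finite $B$ lies in the single interval $[\min B,\max B]$, and the $0$-sliding property holds because $[M+1,2M]$ has cardinality $M$ and misses $\{1,\dots,M\}$ entirely. The heuristics you give instead (``$\mathbb{N}=\bigcup_k[1,k]$'' and ``translates of $[1,k]$ cover $\mathbb{N}$ with bounded overlap'') do not match what Definitions~\ref{definitionNcovering} and~\ref{definition subsliding} actually require, though the conclusion you draw from them is right.
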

The theorem states that we can preserve the almost greedy property by choosing sets in $\mathcal{F}$ to be intervals or in other words, to have no gaps. A natural question is whether introducing gaps still preserves the property. If so, how large can the gaps be?
We use Theorem \ref{m1} to answer this question by studying $\mathcal{F}$-almost greedy bases when $\mathcal{F}$ is defined using a given gap sequence. We show that in this case, $\mathcal{F}$-almost greediness is the same as almost greediness if and only if the gaps are bounded. Specifically, let $(a_n)_{n=1}^\infty$ be a sequence of positive integers. Define 
$$\mathcal{F}_{(a_n)}\ :=\ \{\emptyset\}\cup\{k+\{a_1, a_1+a_2, a_1+a_2+a_3, \ldots, a_1+a_2+\cdots+a_\ell\}\,:\, k\geqslant 0, \ell\geqslant 1\},$$
where for an integer $k$ and a set $A = \{m_1, m_2, \ldots, m_\ell\}$, 
$$k+A\ :=\ \{k+m_1, k+m_2, \ldots, k+m_\ell\}.$$
If $a_n = 1$ for all $n$, $\mathcal{F}_{(a_n)}$ is the collection of all finite intervals of $\mathbb{N}$; more generally, when $a_n \equiv d$, $\mathcal{F}_{(a_n)}$ is the collection of all finite arithmetic progressions of difference $d$.  

\begin{thm}\label{m2}
Let $(a_n)_{n=1}^\infty\subset \mathbb{N}$. The following are equivalent
\begin{enumerate}
\item The sequence $(a_n)_{n=1}^\infty$ is bounded.
\item A basis is $\mathcal{F}_{(a_n)}$-almost greedy if and only if it is almost greedy. 
\end{enumerate}
\end{thm}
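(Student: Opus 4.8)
The equivalence asserted in (2) is really the single implication ``$\mathcal F_{(a_n)}$-almost greedy $\Rightarrow$ almost greedy'', since an almost greedy basis is always $\mathcal F_{(a_n)}$-almost greedy. I would prove the two directions between (1) and (2) separately. For $(1)\Rightarrow(2)$, suppose $a_n\le B$ for all $n$ and write $s_j=a_1+\cdots+a_j$. Then each generating block $k+\{s_1,\dots,s_\ell\}$ lies in the interval $[k+1,k+B\ell]$, meets its convex hull with density at least $1/B$, and has all consecutive gaps at most $B$; moreover $\mathcal F_{(a_n)}$ is closed under the shifts $k\mapsto k+1$. Using these two features—sliding a block one step at a time, and overlapping boundedly many of them (a number depending only on $B$) to cover an arbitrary interval—I would verify directly that $\mathcal F_{(a_n)}$ is $N$-covering and $M$-sliding with $N,M$ depending only on $B$; the only work here is matching the precise quantifiers of Definitions~\ref{definitionNcovering} and \ref{definition subsliding}. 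Theorem~\ref{m1} then yields the conclusion.

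For $(2)\Rightarrow(1)$ I would argue the contrapositive: assuming $(a_n)$ unbounded, I construct a basis that is $\mathcal F_{(a_n)}$-almost greedy but not almost greedy. The structural fact to exploit is that unboundedness forces the members of $\mathcal F_{(a_n)}$ to be sparse and spread out: for every $L$ there is an $\ell(L)$ so that any $F\in\mathcal F_{(a_n)}$ with $|F|\ge\ell(L)$ contains two consecutive elements at distance exceeding $L$, while the members all of whose gaps are at most $L$ have bounded diameter. I would then build a quasi-greedy basis whose failure of democracy is witnessed by equal-cardinality sets $A_m$ with $\|\mathbf 1_{A_m}\|$ large and $B_m$ with $\|\mathbf 1_{B_m}\|$ small, and calibrate the block lengths of the construction against the rate at which $(a_n)$ escapes to infinity so that \eqref{e4} nevertheless holds: for the large, wide-gapped $F$ the construction is arranged so that $\|x-P_F(x)\|\gtrsim\|x-G_m(x)\|$ essentially because such an $F$ removes too little of the mass responsible for the democracy-failure, so \eqref{e4} reduces to suppression quasi-greediness—which is already forced since $\emptyset\in\mathcal F_{(a_n)}$—while for the remaining, boundedly-structured $F$ the inequality reduces to a weaker, partially-greedy-type estimate built into the basis. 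Since the basis is not democratic, it is not almost greedy, which is exactly the contrapositive.

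The bulk of the difficulty is this last construction: one must produce a \emph{single} basis for which \eqref{e4} holds uniformly in $m$ and in \emph{every} admissible $F$—the genuinely delicate case being the ``intermediate'' sets, whose cardinality grows while their largest gap grows only slowly—while at the same time keeping the fundamental function non-democratic. Choosing the block lengths as a function of the growth of $(a_n)$ and checking the greedy inequality uniformly over this whole range of $F$ is where the real work lies; by contrast, the bounded case is a clean reduction to Theorem~\ref{m1}.
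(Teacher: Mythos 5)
For the bounded direction your sketch matches the paper: the paper shows $\mathcal F_{(a_n)}$ is $0$-sliding and $M$-covering with $M=\max_n a_n$ (taking $M$ consecutive shifts of one long block and a pigeonhole argument on $M$ consecutive integers versus the gaps $a_2,\dots,a_\ell$, which are all $\leqslant M$), and then invokes Theorem~\ref{m1}. Your description is vague on the covering argument but the mechanism you name is the right one, so that half is essentially fine.

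The unbounded direction, however, has a genuine gap. You correctly observe the structural fact that unboundedness forces members of $\mathcal F_{(a_n)}$ to be sparse, and that one should build a quasi-greedy basis whose failure of democracy is calibrated against the growth of $(a_n)$. But you then aim to verify the full greedy inequality \eqref{e4} ``uniformly in $m$ and in every admissible $F$,'' flagging the ``intermediate'' $F$ as the delicate case and leaving the construction entirely open. This is doing the problem the hard way. The paper sidesteps exactly this concern by using the characterization already stated as Theorem~\ref{bcal} (and generalized in Theorem~\ref{bcal2}): since $\mathcal{PF}_{(a_n)}$ is hereditary, one only needs a quasi-greedy basis that is $\mathcal{PF}_{(a_n)}$-(disjoint) democratic yet not democratic. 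Concretely the paper puts $b_n=\sum_{k\le n}a_k$, chooses $(n_k)$ so that $b_{n_k+1}-b_{n_k}>2k^6+b_{n_{k-1}+1}$, and places blocks $A_k$ with $|A_k|=k$ inside those large gaps, so that \emph{every} $A\in\mathcal{PF}_{(a_n)}$ meets at most one $A_k$ (Claim~\ref{claim3}); the norm $\|\cdot\|_\circ$, a weighted $\ell_1$-type sum supported on $\bigcup_k A_k$, is what blows up on unions $D_s=\bigcup_{j\le s}A_j$ but stays $\lesssim|A|^{1/2}$ on any $A\in\mathcal{PF}_{(a_n)}$ precisely because of that single-intersection property. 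That separation lemma is the missing idea in your sketch: without it you have no handle on why the non-democratic weight behaves on $\mathcal{PF}_{(a_n)}$-sets, and your ``intermediate $F$'' worry never actually resolves. As written, the proposal gives a plausible plan for $(1)\Rightarrow(2)$ but only a wish-list for $(2)\Rightarrow(1)$; the construction and the key geometric claim that makes it checkable are absent.
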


Furthermore, we obtain the analog of Theorem \ref{m2} for strong partially greedy bases.

\begin{thm}\label{m3}
Let $(a_n)_{n=1}^\infty\subset \mathbb{N}$. The following are equivalent
\begin{enumerate}
\item The sequence $(a_n)_{n=1}^\infty$ is bounded.
\item A basis is $\mathcal{F}_{(a_n)}$-strong partially greedy if and only if it is strong partially greedy. 
\end{enumerate}
\end{thm}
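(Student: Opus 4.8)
The plan is to follow the blueprint of Theorem~\ref{m2}, with conservativity-type conditions playing the part that democracy-type conditions play there. Write $\mathcal{M}=\{0\}\cup\{a_1+\dots+a_\ell:\ell\ge 1\}$ for the set of partial sums of $(a_n)$, so that the $\mathcal{F}_{(a_n)}$-strong partially greedy property bounds $\|x-G_m(x)\|$ by a constant times $\inf\{\|x-S_n(x)\|:n\in\mathcal{M},\ n\le m\}$. Since $\{n\in\mathcal{M}:n\le m\}\subseteq\{0,1,\dots,m\}$, this infimum is at least $\inf_{0\le n\le m}\|x-S_n(x)\|$; hence \emph{every} strong partially greedy basis is $\mathcal{F}_{(a_n)}$-strong partially greedy, for an arbitrary sequence $(a_n)$. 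Thus the content of statement~(2) is the reverse implication, and the theorem asserts that this implication holds for all bases precisely when $(a_n)$ is bounded.

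For $(1)\Rightarrow(2)$ I would assume $M:=\sup_na_n<\infty$; the relevant feature of bounded gaps is that $\mathcal{M}$ is $M$-dense, i.e.\ every block of $M$ consecutive nonnegative integers meets $\mathcal{M}$. Given $x$, $m$, and $0\le n\le m$, replace $S_n(x)$ by $S_k(x)$, where $k$ is the least element of $\mathcal{M}$ with $k\ge n$; since $e_i^*(x)=e_i^*(x-S_n(x))$ for $i>n$ and $|\{n+1,\dots,k\}|\le M$, the $p$-triangle inequality and semi-normalization give $\|x-S_k(x)\|\le D\,\|x-S_n(x)\|$ for a constant $D=D(M,p,c_2)$. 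When $k\le m$, the $\mathcal{F}_{(a_n)}$-hypothesis at level $m$ now yields $\|x-G_m(x)\|\le CD\,\|x-S_n(x)\|$, and taking the infimum over $n$ produces strong partial greediness. The remaining case $k>m$ — where $n$ lies within $M$ of $m$, on the far side of a gap — needs a short extra argument: one uses that an $\mathcal{F}_{(a_n)}$-strong partially greedy basis is quasi-greedy (take the marked position $0$) together with the elementary comparison $\|x-S_m(x)\|\le D\,\|x-S_n(x)\|$ (valid because $n<m<n+M$). A cleaner route for this step is to pass through the internal characterization of (strong) partial greediness as quasi-greediness plus a conservativity condition on the functions $\|\sum_{j\in A}e_j\|$ (and of $\mathcal{F}_{(a_n)}$-strong partial greediness as quasi-greediness plus an analogous condition for pairs $A,B$ separated by some element of $\mathcal{M}$): $M$-density lets one separate any $A<B$ with $|A|\le|B|$ at the cost of enlarging $B$ by at most $M$ coordinates, which costs only a bounded factor by quasi-greediness, so the two conditions coincide up to constants.

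For $(2)\Rightarrow(1)$ I would argue contrapositively, building from an unbounded $(a_n)$ a basis that is $\mathcal{F}_{(a_n)}$-strong partially greedy but not strong partially greedy. Unboundedness gives arbitrarily long intervals of $\mathbb{N}$ disjoint from $\mathcal{M}$; I would amalgamate finite-dimensional blocks supported in such intervals — via an $\ell_p$-type sum of suitably weighted block norms, following the construction behind Theorem~\ref{m2} — designed so that, on each block, the greedy algorithm behaves badly against a partial sum $S_n$ with $n$ \emph{inside} the gap but well against every $S_n$ with $n\in\mathcal{M}$. Such a basis violates the strong partially greedy inequality (because of positions in the gaps) while still satisfying its $\mathcal{F}_{(a_n)}$-version. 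This construction is the main obstacle: the $\mathcal{F}_{(a_n)}$-strong partially greedy inequality involves the \emph{global} operators $G_m$ and $S_n$, which may cross several blocks, so the block norms and the gluing must be arranged so that this inequality survives with a constant independent of $x$ and $m$ while the defect responsible for the failure of strong partial greediness remains confined to the gaps; controlling cross-block interaction is where the real work lies. The bounded-gap direction, by contrast, is essentially a perturbation argument resting on $M$-density.
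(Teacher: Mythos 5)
Your proposal rests on a misreading of the definition of the family $\mathcal{F}_{(a_n)}$, and this invalidates both directions of the argument. You identify the $\mathcal{F}_{(a_n)}$-strong partially greedy inequality with a comparison against partial sums $S_n$ for $n$ in the set $\mathcal{M}=\{0\}\cup\{a_1+\dots+a_\ell:\ell\ge 1\}$. But by the paper's definition, the sets in $\mathcal{F}_{(a_n)}$ are $j+\{a_1,\,a_1+a_2,\ldots,\,a_1+\cdots+a_\ell\}$ for $j\ge 0,\ \ell\ge 1$: these are \emph{gapped} sets (arithmetic progressions of difference $d$ when $a_n\equiv d$, for instance), not initial segments of $\NN$. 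The projection $P_F(x)$ for such an $F$ is nothing like $S_n(x)$. Consequently the claim that ``this infimum is at least $\inf_{0\le n\le m}\|x-S_n(x)\|$'' is false, and your stated reason for why every strong partially greedy basis is $\mathcal{F}_{(a_n)}$-strong partially greedy does not hold. (The implication itself is correct, but the paper obtains it via Theorem~\ref{charPG} and Theorem~\ref{charFPG}: a conservative basis is trivially $\mathcal{F}$-strong disjoint conservative, not via any inclusion of index sets.)

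The $(1)\Rightarrow(2)$ argument you sketch — replacing $S_n$ by the nearest $S_k$ with $k\in\mathcal{M}$, at a cost $D(M,p,c_2)$ by a perturbation argument — cannot be completed, because $\{1,\dots,k\}$ with $k\in\mathcal{M}$ is generally \emph{not} a member of $\mathcal{F}_{(a_n)}$, so the $\mathcal{F}_{(a_n)}$-hypothesis at level $m$ cannot be applied with that $F$. The paper's actual proof of this direction passes through the characterization of $\mathcal{F}_{(a_n)}$-SPG as quasi-greedy plus $\mathcal{F}_{(a_n)}$-strong disjoint conservative, and then proves conservativity by a nontrivial covering argument (Claim~\ref{clcon}): for $A<B$ one covers $A_{\geqslant M}$ by at most $M$ shifted members $S_1,\dots,S_M$ of $\mathcal{F}_{(a_n)}$, decomposes $A$ into pieces $A_{2,i}\subset S_i$, and bounds each $\|1_{A_{2,i}}\|$ using the $\mathcal{F}_{(a_n)}$-SPG hypothesis applied to the set $S_i$ with a suitably enlarged $B_1$, taking care that $S_i\setminus((S_i\setminus A_{2,i})\cup B_1)<((S_i\setminus A_{2,i})\cup B_1)\setminus S_i$. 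Your sketch of ``enlarging $B$ by at most $M$ coordinates'' does not capture this; in particular the elements of $A$ may lie in different shifted copies, which is exactly what the $M$-covering and piecewise decomposition are for.

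For $(2)\Rightarrow(1)$ you propose a fresh block construction, but again the description of what it should achieve (behaving badly against $S_n$ for $n$ ``inside a gap'') is anchored to the same misreading. The paper needs no new construction here: it simply reuses the basis built in Theorem~\ref{propunbounded} for the almost greedy case, observing that the sets $E_m$ there can be chosen to satisfy $E_m>B_m$, so that \eqref{e200} directly shows the basis is not conservative and hence not strong partially greedy, while it is already $\mathcal{F}_{(a_n)}$-almost greedy (hence $\mathcal{F}_{(a_n)}$-strong partially greedy). You should revisit the definition of $\mathcal{F}_{(a_n)}$ and the role of the ordering condition $F\setminus\Lambda_m(x)<\Lambda_m(x)\setminus F$ before attempting the argument again.
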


Our final results are concerned with what we call the $\mathcal{F}_{(a_n)}$-minimum partially greedy property inspired by \cite{Ch2}. Depending on the sequence $(a_n)$, we show that the $\mathcal{F}_{(a_n)}$-minimum partially greedy property may be equivalent to strong partial greediness, to almost greediness, or to properties that lie strictly between the two. The following is proved in Section \ref{mpg}.

\begin{thm}\label{corollarystrict}Let $(a_n)_{n\in\NN}$ be a sequence with the following properties
\begin{itemize}
\item $(a_n)_{n\in \NN}$ is bounded. 
\item There is $\alpha >0$ such that for all $m\in \mathbb{N}$, 
$$
\left| \left \{ 1\leqslant n\leqslant m\, :\, a_n\geqslant 2\right\} \right| \ \leqslant\ \alpha m^{1/4}.
$$
\item The set $\left| \left \{ 1\leqslant n\leqslant m: a_n\geqslant 2\right\}\right|$ is infinite. 
\end{itemize}
Then the $\cF_{(a_n)}$-minimum partially greedy property lies strictly between strong partial greediness and almost greediness. 
\end{thm}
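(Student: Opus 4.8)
The statement packages two \emph{strict} inclusions of classes of bases,
$$
\{\text{almost greedy}\}\ \subsetneq\ \{\cF_{(a_n)}\text{-minimum partially greedy}\}\ \subsetneq\ \{\text{strong partially greedy}\},
$$
so the plan is to prove the two inclusions and then exhibit a basis separating each one. The left inclusion is immediate and uses neither arithmetic hypothesis on $(a_n)$: the infimum in the defining inequality of the $\cF_{(a_n)}$-minimum partially greedy property runs over projections $P_F$ with $F\in\cF_{(a_n)}$ and $|F|\le m$, i.e.\ over a subfamily of the sets of size at most $m$, so it is $\ge\widetilde{\sigma}_m(x)$; hence any almost greedy estimate $\|x-G_m(x)\|\le\Delta\widetilde{\sigma}_m(x)$ yields the $\cF_{(a_n)}$-minimum partially greedy inequality verbatim.

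For the right inclusion, inserting $F=\emptyset$ first shows that a $\cF_{(a_n)}$-minimum partially greedy basis is suppression quasi-greedy, say with constant $C$. Now fix $x$, $m$ and $0\le n\le m$. Writing $F_\ell=\{a_1,a_1+a_2,\dots,a_1+\dots+a_\ell\}$, let $\ell$ be largest with $\max F_\ell\le n$, so that $\ell\le\max F_\ell\le n\le m$ and $F_\ell\subseteq\{1,\dots,n\}$; put $D=\{1,\dots,n\}\setminus F_\ell$ and $y=x-P_D(x)$. Since $F_\ell,D\subseteq\{1,\dots,n\}$ one checks $y-P_{F_\ell}(y)=y-S_n(y)=x-S_n(x)$, so the $\cF_{(a_n)}$-minimum partially greedy inequality applied to $y$ gives $\|y-G_m(y)\|\le C\|x-S_n(x)\|$. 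The crux is to transfer this to $x$. The correction $x-y=P_D(x)$ is supported on a set of size
$$
|D|\ =\ \max F_\ell-\ell\ =\ \sum_{i\le\ell}(a_i-1)\ \le\ (\sup_n a_n-1)\,\bigl|\{i\le\ell:a_i\ge2\}\bigr|\ \le\ (\sup_n a_n-1)\,\alpha\,\ell^{1/4}\ =\ O(m^{1/4}),
$$
and $\Lambda_m(x)$ and $\Lambda_m(y)$ can be chosen with $|\Lambda_m(x)\triangle\Lambda_m(y)|\le 2|D|$. When $\|P_D(x)\|$ is comparable to $\|x-S_n(x)\|$ the transfer is a routine $p$-triangle-inequality-plus-suppression-quasi-greedy estimate. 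In the complementary regime, where $\|x-S_n(x)\|$ is small relative to $\|P_D(x)\|$, one writes $x-G_m(x)=P_{P\setminus\Lambda_m(x)}(x)+P_{Q\setminus\Lambda_m(x)}(x)$ with $P=\supp(x)\cap\{1,\dots,n\}$ and $Q=\supp(x)\cap(n,\infty)$, observes that $P_{Q\setminus\Lambda_m(x)}(x)$ is a greedy remainder of $x-S_n(x)$ and hence $\le C\|x-S_n(x)\|$, and controls $P_{P\setminus\Lambda_m(x)}(x)$ by comparing the greedy selections for $x$ and $y$ and invoking the mpg bound on $y$ together with $|P|\le n\le m$. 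Combining the regimes yields $\|x-G_m(x)\|\le C'\|x-S_n(x)\|$ for all $0\le n\le m$, i.e.\ strong partial greediness.

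The strictness of both inclusions is the creative part. A convenient organizing principle is a static characterization of the $\cF_{(a_n)}$-minimum partially greedy property: a suppression quasi-greedy basis has that property exactly when it obeys an $\cF_{(a_n)}$-conservativeness inequality $\|\mathbf{1}_{\varepsilon E}\|\le C\|\mathbf{1}_{\delta E'}\|$ for the configurations $(E,E')$ read off from the chains $F_\ell$ — in analogy with the characterizations of strong partial greediness as suppression quasi-greediness plus conservativeness, and of almost greediness as suppression quasi-greediness plus democracy. As superdemocracy trivially implies $\cF_{(a_n)}$-conservativeness, it suffices to construct (i) a suppression quasi-greedy, $\cF_{(a_n)}$-conservative, non-democratic basis, which is then $\cF_{(a_n)}$-minimum partially greedy but not almost greedy, and (ii) a strong partially greedy basis failing $\cF_{(a_n)}$-conservativeness, which is then strong partially greedy but not $\cF_{(a_n)}$-minimum partially greedy. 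For (i) one works on a $1$-unconditional scaffold and allows the fundamental function to fail to be democratic only along index sets whose gap pattern imitates that of $(a_n)$; the sparsity bound $|\{i\le m:a_i\ge2\}|\le\alpha m^{1/4}$ is exactly the budget reconciling a genuine failure of democracy with $\cF_{(a_n)}$-conservativeness. For (ii) one uses that infinitely many $a_i\ge2$, so infinitely many $F_\ell$ are not intervals, and distributes the conditionality of a summing-basis-type example along the gaps of the $F_\ell$.

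I expect construction (i) to be the main obstacle: one must verify the $\cF_{(a_n)}$-conservativeness inequality for \emph{every} admissible configuration $(E,E')$ coming from the chains $F_\ell$, while simultaneously forcing an unbounded ratio $\|\mathbf{1}_A\|/\|\mathbf{1}_B\|$ over some equinumerous sets $A,B$, and the quantitative interplay between the $m^{1/4}$ defect count and the chosen fundamental function is delicate. By comparison, the transfer step in the right inclusion and the summing-basis-type example in (ii) are variants of techniques already available in \cite{BBCa} and the (strong) partial greediness literature.
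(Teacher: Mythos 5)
Your high-level plan matches the paper's: establish the two inclusions, organize strictness via a static characterization of the $\cF_{(a_n)}$-minimum partially greedy property as ``quasi-greedy $+$ $\cF_{(a_n)}$-minimum disjoint conservative'' (this is exactly the paper's Theorem~\ref{bcal4}), and then produce two separating bases. However, the separating constructions are precisely the content of the theorem, and you do not carry out either one; you explicitly flag construction (i) as ``the main obstacle'' and leave it there. So this is a strategy outline, not a proof. Moreover, your one-line description of construction (ii) — ``distribute the conditionality of a summing-basis-type example along the gaps of the $F_\ell$'' — does not match what actually works. The paper's Lemma~\ref{lemmastrong} builds a $1$-\emph{unconditional} example: it takes a strictly decreasing sequence of exponents $p_j\downarrow 1$, chooses indices $n_j$ so that $F_{1,n_j}$ has at least $k_j$ defects relative to the interval $a_1+I_{n_j}$ (this is where infinitely many $a_n\geqslant 2$ is used), and equips $c_{00}$ with $\|x\|=\max\{\|x\|_{\ell_\infty},\sup_j\|(x_n)_{n>a_1+n_j}\|_{\ell_{p_j}}\}$. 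The point is a quantitative clash between the growth rates $m_j^{1/p_j}$ and $m_j^{1/p_{j-1}}$, not conditionality. Without some such quantitative mechanism, ``distribute conditionality'' does not yield a failure of the $\cF_{(a_n)}$-minimum disjoint conservative inequality.

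A secondary issue is the right inclusion (mpg $\Rightarrow$ strong partially greedy). Your transfer from $y=x-P_D(x)$ back to $x$ is not under control: the set $D$ has $|D|=O(m^{1/4})$, but the coefficients of $x$ on $D$ can be arbitrarily large, and even restricting attention to $D_2=D\setminus\Lambda_m(x)$ gives only $\|P_{D_2}(x)\|\lesssim |D_2|\min_{\Lambda_m(x)}|e_n^*(x)|\lesssim m^{1/4}\|x-S_n(x)\|$, which is not a uniform constant. The ``two regimes'' you invoke are not specified precisely enough to close this. The paper avoids the issue entirely by chaining Proposition~\ref{p300} (mpg $\Rightarrow$ $\cF$-strong partially greedy, a direct case analysis on whether $F\subset\Lambda_m(x)$) with Theorem~\ref{thm: fspg->spg} ($\cF_{(a_n)}$-strong partially greedy $+$ boundedness of $(a_n)$ $\Rightarrow$ conservative, via a covering-by-shifts argument); this route needs only boundedness of $(a_n)$, not the $m^{1/4}$ sparsity you invoke.
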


\section{Framework for an infinite family $\mathcal{F}$ of finite subsets}

The main purpose of this section is to find sufficient conditions on $\mathcal{F}$ such that a basis is $\mathcal{F}$-almost greedy if and only if it is almost greedy. Before diving into proving Theorem \ref{m1}, we go over several useful existing results. In what follows, we employ the following notation
\begin{enumerate}
\item A sign $\varepsilon = (\varepsilon_n)$ is a sequence of scalars of modulus $1$. 
\item For $A\in \mathbb{N}^{<\infty}$ and a sign $\varepsilon$, let $1_A = \sum_{n\in A}e_n$ and $1_{\varepsilon, A} = \sum_{n\in A}\varepsilon_n e_n$.
\item For two subsets $A, B\subset \mathbb{N}$, $A < B$ means that $a < b$ for all $a\in A$ and $b\in B$. Corresponding meanings hold for inequalities $\leqslant, >, \geqslant$.
\end{enumerate}

\begin{defi}\normalfont(\cite{BC, KT1999}) \label{Fdemocracy}
A basis $\mathcal{B}$ is $\mathcal{F}$-(disjoint, respectively) democratic if there is $C > 0$ such that $\|1_A\|\leqslant C\|1_B\|$ for any (disjoint, respectively) $A, B\in \mathbb{N}^{<\infty}$ with $|A|\leqslant |B|$ and $A\in \mathcal{F}$. 

If we can replace $1_A$ and $1_B$ with $1_{\varepsilon,A}$ and $1_{\delta, B}$ for any signs $\varepsilon, \delta$, we say that $\BB$ is $\mathcal{F}$-(disjoint) superdemocratic. 

If $\mathcal{B}$ is $\mathcal{F}$-(super)democratic and $\mathcal{F} = \mathcal{P}(\mathbb{N})$, we say that $\mathcal{B}$ is (super)democratic (as in the classical definition by Konyagin and Temlyakov \cite{KT1999}).
\end{defi}

The following is a neat characterization of almost greedy bases.
\begin{thm}(\cite[Theorem 3.3]{DKKT2003} and \cite[Theorem 6.3]{AABW})\label{dkkt}\label{bc}
A basis is almost greedy if and only if it is quasi-greedy and democratic. 
\end{thm}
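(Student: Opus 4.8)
The plan is to prove the two implications separately: ``almost greedy $\Rightarrow$ quasi-greedy and democratic'' by testing the almost greedy inequality on well-chosen vectors, and the converse by the Dilworth--Kalton--Kutzarova--Temlyakov scheme in the form adapted to $p$-Banach spaces in \cite{AABW}. For the first implication, quasi-greediness is immediate: taking $A=\emptyset$ in \eqref{e50} gives $\widetilde{\sigma}_m(x)\leqslant\|x\|$, hence $\|x-G_m(x)\|\leqslant C\|x\|$. For democracy I would first treat disjoint sets: given disjoint $A,B\in\NN^{<\infty}$ with $|A|\leqslant|B|$ and $\delta>0$, apply the almost greedy inequality to $x:=1_A+(1+\delta)1_B$ with $m:=|B|$; since $1+\delta>1$, the set $B$ is the unique set of size $m$ satisfying \eqref{e101} for $x$, so $G_m(x)=(1+\delta)1_B$ and $x-G_m(x)=1_A$, while $|A|\leqslant m$ forces $\widetilde{\sigma}_m(x)\leqslant\|x-P_A(x)\|=(1+\delta)\|1_B\|$, whence $\|1_A\|\leqslant C(1+\delta)\|1_B\|$; letting $\delta\downarrow0$ gives $\|1_A\|\leqslant C\|1_B\|$. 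For arbitrary $A,B$ with $|A|\leqslant|B|$, I would choose (using that $\NN$ is infinite) a set $B'$ with $B'\cap(A\cup B)=\emptyset$ and $|B'|=|B|$, and conclude $\|1_A\|\leqslant C\|1_{B'}\|\leqslant C^2\|1_B\|$ by the disjoint case applied twice.

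For the converse implication I would first record the standard consequences of quasi-greediness valid in $p$-Banach spaces (see \cite{DKKT2003,AABW}); writing $A\lesssim B$ for $A\leqslant CB$ with $C$ depending only on $p$, the quasi-greedy constant and the democracy constant, these are: (i) $\|G_m(x)\|\lesssim\|x\|$; (ii) unconditionality for constant coefficients, $\|1_{\varepsilon,E}\|\approx\|1_{\delta,E}\|$; (iii) the upper bound $\bigl\|\sum_{n\in E}a_ne_n\bigr\|\lesssim\bigl(\max_{n\in E}|a_n|\bigr)\|1_E\|$; and (iv) the truncation bound $\bigl(\min_{n\in\Gamma}|e_n^*(z)|\bigr)\|1_\Gamma\|\lesssim\|z\|$ whenever $\Gamma$ satisfies \eqref{e101} for $z$ with $|\Gamma|$ terms. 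Now fix $x\in X$, $m\in\NN$, write $G_m(x)=P_\Lambda(x)$ with $|\Lambda|=m$, let $F\in\NN^{<\infty}$ with $|F|\leqslant m$, and set $y:=x-P_F(x)$, $t:=\min_{n\in\Lambda}|e_n^*(x)|$, $s:=\max_{n\notin\Lambda}|e_n^*(x)|$, so $s\leqslant t$ by \eqref{e101}. Since $\Lambda\setminus F$ is disjoint from $F$ one has $P_{\Lambda\setminus F}(x)=P_{\Lambda\setminus F}(y)$, which yields the identity $x-G_m(x)=y+P_{F\setminus\Lambda}(x)-P_{\Lambda\setminus F}(y)$; by the $p$-triangle inequality it then suffices to bound the last two terms by multiples of $\|y\|$. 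The crucial point is that $\Lambda\setminus F$ is a valid greedy set of $y$ (on $\Lambda\setminus F$ the coefficients of $y$ agree with those of $x$, of modulus $\geqslant t$; off $\Lambda\setminus F$ the coefficients of $y$ vanish on $F$ and have modulus $\leqslant s\leqslant t$ elsewhere). Hence $P_{\Lambda\setminus F}(y)=G_{|\Lambda\setminus F|}(y)$, so $\|P_{\Lambda\setminus F}(y)\|\lesssim\|y\|$ by (i); and applying (iv) with $z=y$, $\Gamma=\Lambda\setminus F$, together with (ii), gives $t\,\|1_{\Lambda\setminus F}\|\lesssim\|y\|$. For the other term, every coefficient of $P_{F\setminus\Lambda}(x)$ has modulus $\leqslant s$ by \eqref{e101}, so (iii) gives $\|P_{F\setminus\Lambda}(x)\|\lesssim s\,\|1_{F\setminus\Lambda}\|$; since $|F|\leqslant m=|\Lambda|$ forces $|F\setminus\Lambda|\leqslant|\Lambda\setminus F|$, democracy gives $\|1_{F\setminus\Lambda}\|\lesssim\|1_{\Lambda\setminus F}\|$, and with $s\leqslant t$ this yields $\|P_{F\setminus\Lambda}(x)\|\lesssim t\,\|1_{\Lambda\setminus F}\|\lesssim\|y\|$. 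Combining the three bounds, $\|x-G_m(x)\|\lesssim\|y\|=\|x-P_F(x)\|$, and taking the infimum over all $F$ with $|F|\leqslant m$ (using the remark after \eqref{e50}) gives the almost greedy inequality.

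The step I expect to be the main obstacle is not the argument above --- which, granting (i)--(iv), reduces to the decomposition identity, the cardinality count $|F\setminus\Lambda|\leqslant|\Lambda\setminus F|$, and tracking constants through the $p$-triangle inequality --- but rather the proof of (iii) and (iv) in the range $0<p<1$, where the layer-cake summation-by-parts argument that settles the case $p=1$ is too lossy and one needs instead boundedness of the truncation operator together with a dyadic decomposition of the coefficient sizes. As these facts are proved in \cite{AABW}, I would cite them as black boxes rather than reprove them.
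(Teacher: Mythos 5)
Your proposal is correct: the paper itself does not reprove this theorem (it is quoted from \cite{DKKT2003} and \cite{AABW}), and your two-sided argument is essentially the standard proof from those sources, with the auxiliary facts (i)--(iv) legitimately black-boxed (the paper records them too, cf.\ Lemma \ref{lemmaTQG}). In particular, your hard direction --- splitting off $P_{F\setminus\Lambda}(x)$, observing that $\Lambda\setminus F$ is a greedy set of $y=x-P_F(x)$, and combining the suppression/truncation bounds with democracy and the count $|F\setminus\Lambda|\leqslant|\Lambda\setminus F|$ --- is the same scheme the paper itself uses to prove its generalizations (Theorems \ref{bcal2}, \ref{charFPG}, and \ref{bcal4}), so the approaches coincide.
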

Beanland and the third named author recently extended the above theorem to any hereditary family $\mathcal{F}$ \cite{BC}.  Recall that a family $\mathcal{F}$ of sets is said to be hereditary if whenever $A\in \mathcal{F}$ and $B\subset A$, we have $B\in \mathcal{F}$. While the original proof was for Banach spaces, the theorem holds for $p$-Banach spaces with obvious modifications. 
\begin{thm}(\cite[Theorem 1.7]{BC})\label{bcal}
Let $\mathcal{F}$ be a hereditary family. A basis is $\mathcal{F}$-almost greedy if and only if it is quasi-greedy and $\mathcal{F}$-disjoint democratic. 
\end{thm}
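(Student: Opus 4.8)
\emph{Plan and toolkit.} The plan is to prove the two implications separately, treating the hereditary family exactly as one does in the classical case $\cF=\cP(\NN)$ behind Theorem \ref{dkkt}, and to use throughout the standard $p$-Banach quasi-greedy toolkit: (i) if $A$ is a greedy set of $y$ then $\|P_{A}(y)\|\leqslant K_{q}\|y\|$, where $K_{q}=(1+C_{q}^{p})^{1/p}$ and $C_{q}$ is the suppression quasi-greedy constant; (ii) \emph{unconditionality for constant coefficients}: $\|1_{\varepsilon,A}\|\leqslant C_{u}\|1_{A}\|$ and $\|1_{A}\|\leqslant C_{u}\|1_{\varepsilon,A}\|$ for every finite $A$ and sign $\varepsilon$; (iii) boundedness of the \emph{truncation} operator $T_{t}(y)=\sum_{|e_{n}^{*}(y)|>t}t\,\sgn(e_{n}^{*}(y))e_{n}$, i.e.\ $\|T_{t}(y)\|\leqslant C_{T}\|y\|$; and (iv) the resulting $\ell_{\infty}$-type bound $\bigl\|\sum_{n\in D}e_{n}^{*}(x)e_{n}\bigr\|\leqslant C_{1}\bigl(\max_{n\in D}|e_{n}^{*}(x)|\bigr)\|1_{D}\|$. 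I would record first that, $\cF$ being nonempty and hereditary, $\emptyset\in\cF$ and $\min\cF=0$, so \eqref{e4} is in force for every $m\in\NN$.

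\emph{($\cF$-almost greedy $\Rightarrow$ quasi-greedy and $\cF$-disjoint democratic).} This direction I expect to be routine. Putting $F=\emptyset$ in \eqref{e4} gives $\|x-G_{m}(x)\|\leqslant\Delta\|x\|$, i.e.\ $\Delta$-suppression quasi-greediness. For democracy, given disjoint $A,B\in\NN^{<\infty}$ with $A\in\cF$, $|A|\leqslant|B|$, and $\delta>0$, I would test \eqref{e4} on $x=(1+\delta)1_{B}+1_{A}$: here $B$ is the unique greedy set of size $|B|$, so $x-G_{|B|}(x)=1_{A}$, while $x-P_{A}(x)=(1+\delta)1_{B}$ with $A\in\cF$ and $|A|\leqslant|B|$; hence $\|1_{A}\|\leqslant\Delta(1+\delta)\|1_{B}\|$, and letting $\delta\to0$ yields $\|1_{A}\|\leqslant\Delta\|1_{B}\|$.

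\emph{(Quasi-greedy and $\cF$-disjoint democratic $\Rightarrow$ $\cF$-almost greedy).} This is the substantive direction. Fix $x\in X$, $m\in\NN$, a greedy set $\Lambda=\Lambda_{m}(x)$, and $F\in\cF$ with $|F|\leqslant m$; set $t=\min_{n\in\Lambda}|e_{n}^{*}(x)|$, $D=F\setminus\Lambda$, $E=\Lambda\setminus F$. By heredity $D\in\cF$; clearly $D\cap E=\emptyset$; and $|D|=|F|-|F\cap\Lambda|\leqslant m-|F\cap\Lambda|=|E|$. The argument starts from the decomposition
\begin{equation*}
x-P_{\Lambda}(x)\ =\ \bigl(x-P_{F\cup\Lambda}(x)\bigr)+P_{F\setminus\Lambda}(x),
\end{equation*}
so that, by the $p$-triangle inequality, it suffices to bound each summand by a fixed multiple of $\|x-P_{F}(x)\|$. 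For the first: $\Lambda\setminus F$ is a valid greedy set of $y:=x-P_{F}(x)$ (on $\Lambda\setminus F$ the coefficients of $y$ agree with those of $x$, of modulus $\geqslant t$, while off $\Lambda\setminus F$ they have modulus $\leqslant t$), and $y-P_{\Lambda\setminus F}(y)=x-P_{F\cup\Lambda}(x)$, so (i) gives $\|x-P_{F\cup\Lambda}(x)\|\leqslant C_{q}\|x-P_{F}(x)\|$. For the second (trivial if $D=\emptyset$): the coefficients of $x$ on $D$ have modulus $\leqslant t$, so (iv) gives $\|P_{F\setminus\Lambda}(x)\|\leqslant C_{1}t\|1_{D}\|$; $\cF$-disjoint democracy, applied to $D\in\cF$ against the disjoint $E$ with $|D|\leqslant|E|$, gives $\|1_{D}\|\leqslant C_{d}\|1_{E}\|$; and since the coefficients of $y$ on $E$ have modulus $\geqslant t>s$ for each $s<t$ while every nonzero coefficient of $T_{s}(y)$ has modulus exactly $s$, parts (i)--(iii) give $s\|1_{\eta,E}\|=\|P_{E}(T_{s}(y))\|\leqslant K_{q}C_{T}\|y\|$ (with $\eta$ the sign of the coefficients of $y$ on $E$), whence, letting $s\uparrow t$ and using (ii), $t\|1_{E}\|\leqslant C_{u}K_{q}C_{T}\|x-P_{F}(x)\|$. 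Chaining these, $\|P_{F\setminus\Lambda}(x)\|\leqslant C_{1}C_{d}C_{u}K_{q}C_{T}\|x-P_{F}(x)\|$, and therefore $\|x-G_{m}(x)\|^{p}\leqslant\bigl(C_{q}^{p}+(C_{1}C_{d}C_{u}K_{q}C_{T})^{p}\bigr)\|x-P_{F}(x)\|^{p}$; taking $p$-th roots and the infimum over admissible $F$ yields \eqref{e4}.

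\emph{Main obstacle.} The architecture is short — a two-term decomposition plus a single application of $\cF$-disjoint democracy to the disjoint pair $(D,E)$, where the key numerical input is $|F|\leqslant m$. The real work is in assembling the $p$-Banach quasi-greedy toolkit (i)--(iv) with explicit constants (in particular the truncation bound (iii) and the $\ell_{\infty}$-estimate (iv), which rely on suppression unconditionality for constant coefficients), and in the routine but necessary perturbation argument legitimizing the assumption that $\Lambda_{m}(x)$ is unique and that the strict inequalities $|e_{n}^{*}(x)|>s$ may be used when reading off $T_{s}(y)$. I expect the interaction of this toolkit with heredity — the only place hereditariness of $\cF$ is used is in concluding $D=F\setminus\Lambda\in\cF$ — to require the most care.
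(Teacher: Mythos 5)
Your proposal is correct and takes essentially the same route as the paper: the paper cites \cite{BC} for this statement but proves the generalization (Theorem \ref{bcal2}) by exactly your scheme --- quasi-greediness from $F=\emptyset$, democracy by testing \eqref{e4} on indicator-type vectors, and for the converse the decomposition around $F\cap\Lambda_m(x)$, bounding $\|P_{F\setminus\Lambda_m(x)}(x)\|$ via the truncation-quasi-greedy estimate \eqref{tqg+} and one application of disjoint democracy to the pair $(F\setminus\Lambda_m(x),\Lambda_m(x)\setminus F)$, which is precisely where heredity enters. Your deviations are cosmetic: a $(1+\delta)$-perturbed test vector $(1+\delta)1_B+1_A$ with direct projection onto $A\in\mathcal{F}$ (the paper projects onto a superset $F$, as needed in the non-hereditary setting), splitting before rather than after applying the suppression quasi-greedy bound, and re-deriving the truncation estimate from the operator $T_t$ instead of quoting Lemma \ref{lemmaTQG}.
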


We are ready to introduce the two properties, $N$-covering and $N$-slicing, which guarantee the desired equivalence between $\mathcal{F}$-almost greediness and almost greediness. 

\begin{defi}\label{definitionNcovering}\normalfont For $N\in\mathbb{N}$, a family $\mathcal{F}$ is said to be $N$-covering if for all $B\in \mathbb{N}^{<\infty}$, there exist $(F_i)_{i=1}^N \subset \mathcal{F}$ such that
$$\left|B\backslash (\cup_{i=1}^N F_i)\right| \ \leqslant\ N.$$
\end{defi}

\begin{defi}\label{definition subsliding}\normalfont
For $N\in \mathbb{N}_{\geqslant 0}$, a family $\mathcal{F}$ is said to be $N$-sliding if for all $M\in \mathbb{N}$, there exists $F\in \mathcal{F}$ such that 
\begin{align}
&\left|F\right|\ \geqslant\ M, \mbox{ and }\\
&\left|F\cap \{1,2, \ldots, M\}\right|\ \leqslant\ N.
\end{align}
\end{defi}

The inclusion of the empty set in $\mathcal{F}$ preserves the $\mathcal{F}$-almost greedy property (up to the constant), the $N$-covering property, and the $N$-sliding property. The last two are obvious; we show the following.

\begin{prop}
A basis $\mathcal{B}$ is $\mathcal{F}$-almost greedy if and only if it is $\mathcal{F}\cup\{\emptyset\}$-almost greedy.
\end{prop}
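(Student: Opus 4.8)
The plan is to show the two implications separately, both of which are essentially immediate once one unwinds the definitions. The key observation is that adding the empty set to $\mathcal{F}$ changes the infimum on the right-hand side of \eqref{e4} only by possibly introducing the term $\|x - P_\emptyset(x)\| = \|x\|$, and this term can never decrease the infimum below what is already forced by the quasi-greedy behavior of $\mathcal{B}$.

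For the forward direction, suppose $\mathcal{B}$ is $\mathcal{F}$-almost greedy with constant $\Delta$. Since $\mathcal{F} \subset \mathcal{F} \cup \{\emptyset\}$, the infimum over $F \in \mathcal{F}\cup\{\emptyset\}$ with $|F| \leqslant m$ is at most the infimum over $F \in \mathcal{F}$ with $|F| \leqslant m$, so \eqref{e4} holds with the larger family and the same constant $\Delta$; note also $\min(\mathcal{F}\cup\{\emptyset\}) \leqslant \min\mathcal{F}$, so the range of $m$ only widens, but for $m < \min\mathcal{F}$ the only competitor is $F = \emptyset$ and we must separately bound $\|x - G_m(x)\|$ by a constant times $\|x\|$; here I would invoke that an $\mathcal{F}$-almost greedy basis is quasi-greedy (this follows from Theorem \ref{bcal} applied to the hereditary hull, or directly by taking $F$ to be any fixed member of $\mathcal{F}$ of size $\geqslant m$ and using democracy-type estimates — in fact the simplest route is to first prove $\mathcal{F}$-almost greedy $\Rightarrow$ quasi-greedy, which one gets by comparing against a fixed large $F\in\mathcal{F}$). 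So the forward direction reduces to the standard fact that $\mathcal{F}$-almost greediness implies some suppression quasi-greedy constant $C$, whence $\|x - G_m(x)\| \leqslant C\|x\| = C\|x - P_\emptyset(x)\|$ for the small values of $m$.

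For the reverse direction, suppose $\mathcal{B}$ is $\mathcal{F}\cup\{\emptyset\}$-almost greedy with constant $\Delta$. Then for every $x \in X$, every $m \in \mathbb{N}_{\geqslant \min\mathcal{F}}$, and every $G_m(x)$,
$$
\|x - G_m(x)\| \ \leqslant\ \Delta \inf_{F \in \mathcal{F}\cup\{\emptyset\},\, |F|\leqslant m} \|x - P_F(x)\| \ \leqslant\ \Delta \inf_{F \in \mathcal{F},\, |F|\leqslant m} \|x - P_F(x)\|,
$$
since dropping $\emptyset$ from the family only enlarges the infimum. This is precisely \eqref{e4} for $\mathcal{F}$, so $\mathcal{B}$ is $\mathcal{F}$-almost greedy with the same constant.

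The only genuine point requiring care — and the one I would flag as the main obstacle — is the forward direction at the values $m$ with $\min(\mathcal{F}\cup\{\emptyset\}) = 0 \leqslant m < \min\mathcal{F}$, where the $\mathcal{F}\cup\{\emptyset\}$ condition demands control of $\|x - G_m(x)\|$ by $\Delta\|x\|$ but the bare $\mathcal{F}$ condition says nothing about such $m$. Resolving this is exactly the assertion that $\mathcal{F}$-almost greediness forces quasi-greediness; I would either cite the $p$-Banach version of Theorem \ref{bcal} (passing to the hereditary hull $\mathcal{PF}$, noting $\mathcal{F}$-almost greedy with respect to $\mathcal{F}$ is at least as strong as with respect to the non-hereditary $\mathcal{F}$ in the sense needed) or, more self-containedly, argue directly: fix any $F_0 \in \mathcal{F}$, and for $x$ supported on finitely many coordinates with $m < \min\mathcal{F} \leqslant |F_0|$, apply \eqref{e4} to a suitable rank-$|F_0|$ perturbation of $x$ to extract $\|x - G_m(x)\| \lesssim \|x\|$, then pass to general $x$ by density and continuity of $G_m$ off a null set of coefficient configurations. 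Either way the estimate is soft and the constant is absorbed into a final $\max(\Delta, C)$.
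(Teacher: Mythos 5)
Your proof correctly identifies the crux of the forward direction (bounding $\|x - G_m(x)\|$ by a constant multiple of $\|x\|$, i.e.\ a suppression quasi-greedy bound), but both routes you propose for establishing it are flawed. The hereditary-hull route does not work: passing from $\mathcal{F}$ to $\mathcal{PF}$ would require knowing that $\mathcal{F}$-almost greediness implies $\mathcal{PF}$-almost greediness, but that implication goes the wrong way in general --- if $\mathcal{F}\subset\mathcal{F}'$ then $\mathcal{F}'$-almost greediness is the \emph{stronger} hypothesis (smaller infimum on the right), and Remark~\ref{remarkspg} in the paper gives a concrete counterexample in which $\mathcal{F}$-almost greediness holds (with $\mathcal{F}$ the initial segments) while $\mathcal{PF}$-almost greediness, which is full almost greediness, fails. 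Your alternative, ``taking $F$ to be any fixed member of $\mathcal{F}$ of size $\geqslant m$,'' also cannot work as stated because the infimum in \eqref{e4} runs only over $|F|\leqslant m$, so such an $F$ is not an admissible competitor. The perturbation argument you sketch is unnecessary and, as written, does not clearly close the loop.

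The correct route --- and what the paper in fact does --- is a direct, elementary estimate requiring neither Theorem~\ref{bcal} nor any perturbation. For $m\leqslant\min\mathcal{F}$ one simply bounds $\|G_m(x)\|^p\leqslant m^p c_2^{2p}\|x\|^p\leqslant(\min\mathcal{F})^pc_2^{2p}\|x\|^p$ and uses the $p$-triangle inequality; for $m>\min\mathcal{F}$ one picks $G\in\mathcal{F}$ with $|G|=\min\mathcal{F}\leqslant m$ (which \emph{is} admissible in \eqref{e4}), applies $\mathcal{F}$-almost greediness to get $\|x-G_m(x)\|\leqslant\Delta\|x-P_G(x)\|$, and bounds $\|x-P_G(x)\|$ by the same trivial estimate on $P_G$. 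Separately, your opening sentence in the forward direction (``the infimum over $\mathcal{F}\cup\{\emptyset\}$ is at most the infimum over $\mathcal{F}$, so \eqref{e4} holds with the larger family and the same constant'') has the logic backwards: a smaller right-hand infimum means the larger family imposes the \emph{stronger} requirement, not the weaker one. You do recover from this later, but the statement as written is false and should be removed.
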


\begin{proof}
The backward implication is obvious. For the forward implication, assume that $\mathcal{B}$ is $\Delta$-$\mathcal{F}$-almost greedy. Let $x\in X$ with a greedy sum $G_m(x)$ for some $m\geqslant 0$. Let $F\in \mathcal{F}\cup\{\emptyset\}$ with $|F|\leqslant m$. If $F \neq \emptyset$, then $F\in \mathcal{F}$, and $\mathcal{F}$-almost greediness gives
$$\|x-G_m(x)\|\ \leqslant\ \Delta\|x-P_F(x)\|.$$
If $F = \emptyset$, we shall show that 
$$\|x-G_m(x)\|\ \leqslant\ \Delta(1+c_2^{2p}(\min \mathcal{F})^p)^{1/p}\|x\|.$$
Indeed, if $m\leqslant \min \mathcal{F}$, then 
\begin{align*}\|x - G_m(x)\|^p\ \leqslant\ \|x\|^p + \|G_m(x)\|^p&\ \leqslant\ \|x\|^p + c_2^{2p}(\min \mathcal{F})^p\|x\|^p\\
&\ =\ (1+c_2^{2p}(\min \mathcal{F})^p)\|x\|^p.\end{align*}
If $m > \min\mathcal{F}$, let $G \in \mathcal{F}$ such that $|G| = \min \mathcal{F}$. Then due to $\mathcal{F}$-almost greediness and the above case, 
$$\|x-G_m(x)\|\ \leqslant\ \Delta\|x-P_G(x)\|\ \leqslant\ (1+c_2^{2p}(\min \mathcal{F})^p)^{1/p}\Delta\|x\|.$$
Hence, our basis is $\mathcal{F}\cup\{\emptyset\}$-almost greedy with constant $(1+c_2^{2p}(\min\mathcal{F})^p)^{1/p}\Delta$. 
\end{proof}

Therefore, we can and shall assume that $\emptyset\in \mathcal{F}$.

\begin{lem}\label{l0}
Let $\mathcal{B}$ be $\Delta$-$\mathcal{F}$-almost greedy. Then $\mathcal{B}$ is $\Delta$-suppression quasi-greedy.
\end{lem}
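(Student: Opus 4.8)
The plan is to exploit the standing assumption (justified in the preceding proposition) that $\emptyset \in \mathcal{F}$, which forces $\min \mathcal{F} = 0$ and hence makes the defining inequality of $\mathcal{F}$-almost greediness available for every $m \in \mathbb{N}$.

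First I would fix $x \in X$, $m \in \mathbb{N}$, and a greedy sum $G_m(x)$. Since $\emptyset \in \mathcal{F}$, we have $\min \mathcal{F} = 0$, so $m \geqslant \min \mathcal{F}$ and the inequality \eqref{e4} applies. Taking $F = \emptyset$ in the infimum on the right-hand side of \eqref{e4} and noting that $P_\emptyset(x) = 0$, I get
$$\|x - G_m(x)\| \ \leqslant\ \Delta \inf_{F \in \mathcal{F}, |F| \leqslant m} \|x - P_F(x)\| \ \leqslant\ \Delta \|x - P_\emptyset(x)\| \ =\ \Delta \|x\|.$$
Since $x$, $m$, and $G_m(x)$ were arbitrary, $\mathcal{B}$ is $\Delta$-suppression quasi-greedy.

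There is no real obstacle here: the content of the lemma is entirely in the observation that admitting $\emptyset$ into $\mathcal{F}$ turns the $\mathcal{F}$-almost greedy estimate into the quasi-greedy estimate with the same constant. The only point worth stating explicitly is that the normalization $\emptyset \in \mathcal{F}$ is harmless, which is exactly what the previous proposition guarantees.
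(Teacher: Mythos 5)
Your proof is correct and is exactly the paper's argument: both rely on the standing normalization $\emptyset \in \mathcal{F}$ and obtain the suppression quasi-greedy bound by taking $F = \emptyset$ in the defining inequality \eqref{e4}. Nothing further is needed.
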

\begin{proof}
Since $\emptyset\in \mathcal{F}$, the result follows by setting $F = \emptyset$ in \eqref{e4}.
\end{proof}

For any family $\mathcal{F}$, $\mathcal{PF}$ is hereditary, and, if $\mathcal{F}$ is hereditary, then $\mathcal{PF}=\mathcal{F}$.  Furthermore, if $\mathcal{F}$ is hereditary and contains sets of arbitrarily large cardinality, it is $0$-sliding. On the other hand, it is easy to check that a $0$-sliding family $\mathcal{F}$ need not be hereditary: just take a family of disjoint intervals $(I_n)_{n\in \NN}$ of unbounded cardinality with $I_n<I_{n+1}$ for all $n\in \NN$.  Given a sliding family $\mathcal{F}$, $\mathcal{F}$-almost greediness is equivalent  to $\mathcal{P}\mathcal{F}$-almost greediness up to a constant. More precisely, we have the following result. 

\begin{lem}\label{lemmasubsets} Fix an $N$-sliding family $\mathcal{F}$. If a basis $\BB$ is $\Delta$-$\mathcal{F}$-almost greedy, then $\BB$ is $\max\{2^{1/p}\Delta^3, (2N)^{1/p}c_2^2\}$-$\mathcal{P}\mathcal{F}$-superdemocratic. Hence,  it is $\mathcal{P}\mathcal{F}$-almost greedy. 
\end{lem}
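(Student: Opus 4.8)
The plan is to establish $\mathcal{PF}$-superdemocracy directly from $\Delta$-$\mathcal{F}$-almost greediness, and then invoke Theorem~\ref{bcal} (applied to the hereditary family $\mathcal{PF}$) together with Lemma~\ref{l0} to deduce $\mathcal{PF}$-almost greediness. For the superdemocracy estimate, fix disjoint-or-not sets $A, B \in \mathbb{N}^{<\infty}$ with $|A| \leqslant |B|$ and $A \in \mathcal{PF}$, and fix signs $\varepsilon, \delta$; the goal is $\|1_{\varepsilon,A}\| \leqslant C\|1_{\delta,B}\|$ with $C = \max\{2^{1/p}\Delta^3, (2N)^{1/p}c_2^2\}$. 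Since $A \in \mathcal{PF}$, there is $A' \in \mathcal{F}$ with $A \subset A'$.

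First I would handle the case $|A| \leqslant N$ by a crude estimate: $\|1_{\varepsilon,A}\| \leqslant c_2 |A|^{1/p} \leqslant c_2 N^{1/p}$ (using the $p$-triangle inequality and $\|e_n\| \leqslant c_2$), while $\|1_{\delta,B}\| \geqslant \|e_n^*\|^{-1}|e_n^*(1_{\delta,B})| \geqslant c_2^{-1}$ for any $n \in B$ (which is nonempty since $|B| \geqslant |A| \geqslant 1$; if $A = \emptyset$ the inequality is trivial). This gives the bound with constant $(2N)^{1/p}c_2^2$ to spare. The substantive case is $|A| > N$. Here I would use the $N$-sliding property to pick $F \in \mathcal{F}$ with $|F| \geqslant M$ and $|F \cap \{1,\dots,M\}| \leqslant N$ for a suitably large $M$ — chosen so that $F$ can be "slid past" both $A$ and $B$, i.e., $M > \max(A \cup B)$ roughly, so that $F \setminus \{1,\dots,M\}$ is a large subset of $\mathcal{F}$ disjoint from $A \cup B$. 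Trimming $F$ to a subset $F' \subset F \setminus \{1,\dots,M\}$ with $|F'| = |A| - $ (something small) or simply $|F'|$ large and $F' \cap (A \cup B) = \emptyset$, $F' \in \mathcal{PF}$.

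The heart of the argument is the standard "move a greedy sum onto a far-away block" trick: consider a vector of the form $x = 1_{\delta,B} + t\,1_{\eta,F''}$ where $F''$ is a disjoint block from $\mathcal{F}$ (or $\mathcal{PF}$) chosen with $|F''| = |B|$ and $F'' > A \cup B$, with $|t|$ slightly less than $1$ so that a greedy selection of size $m = |B|$ picks exactly $F''$ (coordinates of modulus $1$) over $B$ — wait, one must be careful about ties; the usual perturbation/convexity argument handles this. Then $x - G_m(x) = 1_{\delta,B}$ up to the perturbation, while $x - P_{F'}(x)$ for an appropriate $F' \in \mathcal{PF}$ with $A \subset F'$... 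Actually the cleaner route, following \cite{BBCa,BC}: build $x$ so that its greedy sum of order $m := |A|$ is forced onto a far translate of $A$-sized mass while the projection $P_{A''}$ for some $A'' \in \mathcal{F}$ with $A'' \supset$ (a translate of $A$) removes it, yielding $\|1_{\varepsilon,A}\| \approx \|x - G_m(x)\| \leqslant \Delta\|x - P_{A''}(x)\| \approx \|1_{\delta,B}\|$; the two $\Delta$'s and a suppression-quasi-greedy application (Lemma~\ref{l0}, also $\Delta$) account for the $\Delta^3$.

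I expect the main obstacle to be the bookkeeping that lets one realize \emph{both} $A$ (up to translation/sign, inside a set of $\mathcal{F}$) and $B$ as the relevant "removed" and "greedy" blocks of a single well-chosen vector, while keeping everything disjoint and the cardinalities matched, using only that $\mathcal{F}$ is $N$-sliding (not hereditary, not translation-invariant). Concretely: $N$-sliding only guarantees \emph{one} large set of $\mathcal{F}$ that starts late; to get a large subset of $\mathcal{PF}$ disjoint from a prescribed finite set $A \cup B$ and of prescribed size, one applies $N$-sliding with $M$ just past $\max(A\cup B)$ and then passes to a subset, which is legal precisely because we have already moved to the hereditary family $\mathcal{PF}$. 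The $N$ lost coordinates in $F \cap \{1,\dots,M\}$ are absorbed either into the error term (costing a factor controlled by $c_2, N$ via the $p$-norm, hence the $(2N)^{1/p}c_2^2$ term) or are irrelevant once $|A| > N$. Once the combinatorial setup is in place, the norm estimates are routine $p$-convexity manipulations plus three applications of the $\Delta$-$\mathcal{F}$-almost greedy / suppression-quasi-greedy inequality, and the final sentence "hence it is $\mathcal{PF}$-almost greedy" is immediate from Theorem~\ref{bcal} since quasi-greediness is Lemma~\ref{l0} and $\mathcal{PF}$-disjoint democracy follows a fortiori from $\mathcal{PF}$-superdemocracy.
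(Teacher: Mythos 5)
Your high-level plan is right (derive $\mathcal{PF}$-superdemocracy directly, then close with Lemma~\ref{l0} and the hereditary characterization theorem, and the crude bound for small $|A|$), and you correctly anticipate the ingredients: an $N$-sliding set pushed past $A\cup B$, padding inside the vector to make the $\mathcal{F}$-almost-greedy inequality applicable, and three applications of $\Delta$. But the central step is not carried out, and the route you sketch has a genuine gap.

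Your sketch tries to compare $A$ with $B$ in a \emph{single} application of the $\mathcal{F}$-almost-greedy inequality: ``build $x$ so that $\|1_{\varepsilon,A}\| \approx \|x - G_m(x)\| \leqslant \Delta\|x - P_{A''}(x)\| \approx \|1_{\delta,B}\|$.'' This cannot work in general, because the set $A_1\in\mathcal{F}$ that witnesses $A\in\mathcal{PF}$ may intersect $B$, and likewise the sliding set $F$ may intersect $B$; there is no disjointness to exploit, so projecting onto $A_1$ (or $F$) does not isolate $1_{\delta,B}$. (Also note that the condition in Definition~2.4 only allows projections onto members of $\mathcal{F}$, not $\mathcal{PF}$, so ``$x - P_{F'}(x)$ for an appropriate $F'\in\mathcal{PF}$'' is not a legal move; and ``translate of $A$'' makes no sense for a general $N$-sliding family, which has no translation invariance.) The paper resolves this by going through an \emph{intermediate} set $F_1\subset F$ with $|F_1|=|A|$ and $F_1>B\cup A_1$, comparing $A$ with $F_1$ (projecting onto $A_1$, with padding $1_{A_1\setminus A}$), and then separately comparing $F_1$ with $B_1:=B\setminus F$ (projecting onto $F$, with padding $1_{F\setminus F_j}$). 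Because $|F_1|=|A|$ may exceed $|B_1|\geqslant |B|-N$, the second comparison additionally requires partitioning $F_1$ into two pieces $F_2,F_3$ of size $\leqslant |B_1|$, which is only possible when $|B|>2N$; this is why the paper's small/large dichotomy is $|A|\leqslant 2N$ versus $|A|>2N$, not the $N$ threshold you propose. If you follow your outline literally with threshold $N$ and no partition, the cardinality constraint $|F|\leqslant m$ in \eqref{e4} fails in the second comparison for $N<|A|\leqslant 2N$ and even for larger $|A|$ with $|B|$ close to $|A|$. In short: you are missing both the intermediate ``clean'' block $F_1$ and the partition step, and these are not routine bookkeeping --- they are the crux of the lemma.
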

\begin{proof}
To prove the $\mathcal{P}\mathcal{F}$-superdemocracy property, fix $A\in \mathcal{PF}$ and $B\in \NN^{<\infty}$ with $|B|\geqslant |A|$, and signs $\varepsilon, \delta$. First, suppose $|A|> 2N$. Pick $A_1\in \mathcal{F}$ so that $A\subset A_1$ and choose $M>N+\max(B\cup A_1)$. By hypothesis, there is $F\in \mathcal{F}$ such that 
$$
\left| F\cap \{1,\dots, M\}\right|\ \leqslant\ N\mbox{ and }|F|\ \geqslant\ M. 
$$
As 
$$|F\cap\{1, \ldots, M\}|\ \leqslant\ N\mbox{ and } |F|-N\ \geqslant\ M- N\ >\ \max(B\cup A_1)\ >\ |A|,$$
there is $F_1\subset F$ such that $|F_1|=|A|$ and $F_1>B\cup A_1$. By $\mathcal{F}$-almost greediness, we have
\begin{align*}
\|1_{\varepsilon, A}\|&\ =\ \|1_{\varepsilon, A}+1_{A_1\setminus A}+1_{F_1}-(1_{A_1\setminus A}+1_{F_1})\|\\
&\ \leqslant \ \Delta\|1_{\varepsilon, A}+1_{A_1\setminus A}+1_{F_1}-P_{A_1}(1_{\varepsilon, A}+1_{A_1\setminus A}+1_{F_1})\|\ =\ \Delta\|1_{F_1}\|. 
\end{align*}
Let $B_1:=B\setminus F$. Since 
$$|B_1|\ \geqslant\ |B|-N \ >\ \frac{|B|}{2} \ \geqslant\ \frac{|A|}{2}\ =\ \frac{|F_1|}{2},$$
there is a partition $\{F_2,F_3\}$ of $F_1$ with $|F_j|\leqslant |B_1|$, $j=2, 3$. By Lemma \ref{l0}, for $j=2, 3$, 
\begin{align*}
\|1_{F_j}\|&\ =\ \|1_{F_j}+1_{F\setminus F_j}+1_{\delta, B_1}-(1_{F\setminus F_j}+1_{\delta, B_1})\|\\
&\ \leqslant\ \Delta\|1_{F_j}+1_{F\setminus F_j}+1_{\delta, B_1}-P_F(1_{F_j}+1_{F\setminus F_j}+1_{\delta, B_1})\|\\
&\ =\ \Delta\|1_{\delta, B_1}\|\ \leqslant\ \Delta^2\|1_{\delta,B}\|. 
\end{align*}
Combining the above inequalities, we obtain 
$$
\|1_{\varepsilon,A}\|^p\ \leqslant\ \Delta^p\|1_{F_1}\|^p\ \leqslant\ \Delta^p (\|1_{F_2}\|^p+\|1_{F_3}\|^p)\ \leqslant\ 2\Delta^{3p}\|1_{\delta,B}\|^p. 
$$
Hence, $\|1_{\varepsilon, A}\|\leqslant\ 2^{1/p}\Delta^3\|1_{\delta, B}\|$.
On the other hand, if $|A|\leqslant 2N$, then we have the trivial bounds
$$
\|1_{\varepsilon,A}\|^p\ \leqslant\  |A|c_2^p\ \leqslant\ 2Nc_2^p\mbox{ and }1\ \leqslant\ c_2\|1_{\delta, B}\|,
$$
which give
$$
\|1_{\varepsilon, A}\|\ \leqslant\ (2N)^{1/p}c_2^2\|1_{\delta,B}\|. 
$$
Hence, $\BB$ is $\mathcal{P}\mathcal{F}$-superdemocratic, with the constant as in the statement. Given that $\mathcal{P}\mathcal{F}$ is hereditary, by Theorem \ref{bc} and Lemma \ref{l0}, $\BB$ is $\mathcal{P}\mathcal{F}$-almost greedy. 
\end{proof}

\begin{rek}\rm \label{remarkspg} Generally, $\mathcal{F}$-almost greediness is not equivalent to $\mathcal{P}\mathcal{F}$-almost greediness.  Indeed, let $\mathcal{F}_0$ be the family consisting of the empty set and all finite initial segments of $\NN$. Then it follows from the definition that a basis $\BB$ is $\mathcal{F}_0$-almost greedy if and only if it is strong partially greedy. It is known that this property does not entail almost greediness (see \cite{DKKT2003}). On the other hand, it is obvious that $\BB$ is $\mathcal{P}\mathcal{F}_0$-almost greedy if and only if it is almost greedy. 
\end{rek}

\begin{proof}[Proof of Theorem \ref{m1}]
Suppose that $\BB$ is $\Delta$-$\mathcal{F}$-almost greedy. By Lemma~\ref{lemmasubsets}, the basis $\BB$ is $\Delta_1$-$\mathcal{P}\mathcal{F}$-superdemocratic, 
with 
$$\Delta_1\ =\ \max\{2^{1/p}\Delta^3, (2N)^{1/p}c_2^2\}.$$ Fix $A, B\in \NN^{<\infty}$ with $|A|\leqslant |B|$ and signs $\varepsilon, \delta$. By hypothesis, there are $(F_i)_{i=1}^{N}\subset \mathcal{F}$ such that 
$$
|A_0\ :=\ A\setminus \left(\cup_{i=1}^{N}F_i\right)|\ \leqslant\ N. 
$$
Choose pairwise disjoint (possibly empty) sets $(A_i)_{i=1}^{N}$ so that 
$$
A_i\subset F_i, \mbox{ for all }1\leqslant i\leqslant N\mbox{ and }A\setminus A_0\ =\ \sqcup_{i=1}^{N}A_i. 
$$
By the triangle inequality, 
$$
\|1_{\varepsilon,A}\|^p\ \leqslant\  \sum_{i=0}^{N}\|1_{\varepsilon, A_i}\|^p\ \leqslant\ Nc_2^{2p}\|1_{\delta,B}\|^p+\sum_{i=1}^{N}\Delta_1^p\|1_{\delta,B}\|^p\ =\ N(c_2^{2p}+\Delta^p_1)\|1_{\delta,B}\|^p. 
$$
This proves that $\BB$ is $N^{1/p}(c_2^{2p}+\Delta_1^p)^{1/p}$-superdemocratic. Given that $\mathcal{B}$ is quasi-greedy by Lemma \ref{l0}, Theorem \ref{dkkt} guarantees that it is almost greedy. 
\end{proof}

\begin{rek}\rm Note that the equivalence in Theorem \ref{m1} does not hold without the sliding condition, as the example in Remark~\ref{remarkspg} shows. 
\end{rek}

To close this section, we extend Theorem \ref{bcal} to families that are not necessarily hereditary. We first need an extension of (super)democracy to our context. 

\begin{defi}\label{Fdemocracy2}\normalfont A  basis $\mathcal{B}$ is $\mathcal{F}$-strong disjoint democratic if there is $C > 0$ such that $\|1_A\|\leqslant C\|1_B\|$ for any $A, B\in \mathbb{N}^{<\infty}$ such that $|A|\leqslant |B|$, and there is $F\in \cF$ such that $A\subset F$ and $B\cap F=\emptyset$. 

If we can replace $1_A$ and $1_B$ with $1_{\varepsilon,A}$ and $1_{\delta, B}$ for any signs $\varepsilon, \delta$, we say that $\BB$ is $\mathcal{F}$-strong disjoint superdemocratic. 
\end{defi}
\begin{rek}\label{remarkFdisjoint}\normalfont If $\cF$ is a hereditary family, a basis is $C$-disjoint (super)democratic if and only if it is $C$-strong disjoint (super)democratic. 
\end{rek}

To extend Theorem \ref{bcal}, we also need some auxiliary results from the literature. For $x\in X$, let $\sgn(e_n^*(x)) = \begin{cases}e_n^*(x)/|e_n^*(x)|&\mbox{ if }e_n^*(x)\neq 0,\\ 1&\mbox{ otherwise}\end{cases}$ and $\varepsilon(x) = (\sgn(e_n^*(x)))$.

\begin{defi}\label{definitionTQG}\normalfont Let $\BB$ be a basis of a $p$-Banach space $X$. We say that $\BB$ is \emph{truncation quasi-greedy} if there is $C>0$ such that 
\begin{equation*}
\min_{n\in \Lambda_m(x)}|e_n^*(x)|\|1_{\varepsilon(x),\Lambda_m(x)}\|\ \leqslant\ C\|x\|
\end{equation*}
for all $x\in X$, $m\in \NN$, and $\Lambda_m(x)$. 
\end{defi}

\begin{lem}(\cite[Theorem 4.13 and Proposition 4.16]{AABW} and \cite[Lemma 2.2]{DKKT2003})\label{lemmaTQG} Let $\BB$ be a basis of a $p$-Banach space $X$. Then 
\begin{itemize}
\item If $\BB$ is quasi-greedy, it is truncation quasi-greedy. 
\item If $\BB$ is truncation quasi-greedy, there is $C>0$ such that
\begin{equation}
\left\|\sum_{n\in A}a_ne_n\right\|\ \leqslant \ C\|x\|,\label{tqg+}
\end{equation}
for all $A\in \NN^{<\infty}$, scalars $(a_n)_{n\in \NN}$, and $x\in X$ such that 
$$
\max_{n\in A}|a_n|\ \leqslant\ \min_{n\in A}|e_n^*(x)|. 
$$
\end{itemize}
\end{lem}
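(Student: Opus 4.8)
\medskip

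\noindent\emph{Proof idea.} Both assertions belong to the classical toolkit for quasi-greedy bases (this is why the statement is attributed to \cite{DKKT2003} and \cite{AABW}); here is how I would argue.

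\smallskip

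\noindent\emph{Quasi-greedy $\Rightarrow$ truncation quasi-greedy.} Let $\BB$ be $K$-suppression quasi-greedy and fix $x\in X$, $m\in\NN$, a greedy set $\Lambda:=\Lambda_m(x)$, and $t:=\min_{n\in\Lambda}|e_n^*(x)|$. If $t=0$ the inequality is trivial, so assume $t>0$ and, by homogeneity, rescale $x$ so that $t=1$; then $|e_n^*(x)|\geqslant 1$ on $\Lambda$ and $|e_n^*(x)|\leqslant 1$ off $\Lambda$, and I must show $\|1_{\varepsilon(x),\Lambda}\|\leqslant C\|x\|$ with $C=C(K,p)$. The plan is the classical truncation argument: group the coordinates of $\Lambda$ into the finitely many dyadic levels $\Lambda_j:=\{n\in\Lambda:2^j\leqslant|e_n^*(x)|<2^{j+1}\}$ (finiteness uses $e_n^*(x)\to 0$), observe that each $\{n:|e_n^*(x)|\geqslant 2^j\}$ is a greedy set for $x$, so that the truncated pieces $\sum_{n\in\Lambda_j}e_n^*(x)e_n$ are differences of two greedy projections and hence have norm at most a $(K,p)$-constant times $\|x\|$; then replace on each level the true coefficients by the flat ones $2^j\varepsilon_n(x)$ at the cost of a further constant; and finally recombine the levels with the $p$-triangle inequality, $\|1_{\varepsilon(x),\Lambda}\|^p\leqslant\sum_j\|1_{\varepsilon(x),\Lambda_j}\|^p\leqslant C_0^p\|x\|^p\sum_{j\geqslant 0}2^{-jp}<\infty$. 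It is crucial that the weights decay geometrically — summing the $p$-triangle inequality over singletons would cost a factor $|\Lambda|^{1/p}$ — and that $p>0$, so that the geometric series converges.

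\smallskip

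\noindent\emph{Truncation quasi-greedy $\Rightarrow$ the bound \eqref{tqg+}.} Let $\BB$ be truncation quasi-greedy with constant $C_T$, and fix $x$, a finite set $A$, and scalars $(a_n)_{n\in A}$ with $\max_{n\in A}|a_n|\leqslant t:=\min_{n\in A}|e_n^*(x)|$. If $t=0$ all $a_n$ vanish; otherwise rescale so that $t=1$, so $|a_n|\leqslant 1$ and $|e_n^*(x)|\geqslant 1$ on $A$. Since $e_n^*(x)\to 0$ for every basis in the sense of Definition~\ref{definitionbasis} ($x$ being a limit of finitely supported vectors), the set $D:=\{n:|e_n^*(x)|\geqslant 1\}$ is finite, contains $A$, and is a greedy set for $x$. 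Truncation quasi-greediness applied to $x$ with the greedy set $D$ gives $\|1_{\varepsilon(x),D}\|\leqslant C_T\|x\|$, and applied to the vector $1_{\varepsilon(x),D}$ with the greedy set $A\subseteq D$ it gives $\|1_{\varepsilon(x),A}\|\leqslant C_T\|1_{\varepsilon(x),D}\|\leqslant C_T^2\|x\|$. The remaining step — passing from the unimodular vector $1_{\varepsilon(x),A}$ to the general sum $\sum_{n\in A}a_ne_n$ with $|a_n|\leqslant 1$ — is again a dyadic-layering/flattening argument: split $A$ according to the dyadic size of $|a_n|$, note that truncation quasi-greediness controls the norm of a block of comparable coefficients by a constant times that of the corresponding flat block, and recombine with the $p$-triangle inequality into a convergent geometric series.

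\smallskip

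\noindent\emph{Main obstacle.} The genuine content of both parts is the truncation principle: for a quasi-greedy (respectively, truncation quasi-greedy) basis there is a constant, depending only on the relevant greedy constant and on $p$, bounding the norm of a block of coefficients that have been flattened to a common modulus by the norm of the original block. This is delicate precisely because such bases need not be unconditional, so unrestricted monotone domination of coefficients fails in general; one recovers domination only for \emph{comparable} coefficients, and only after the dyadic decomposition, the whole scheme closing because $\sum_{j\geqslant 0}2^{-jp}<\infty$.
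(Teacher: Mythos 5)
The paper does not prove this lemma; it is invoked directly from the cited references (Theorem 4.13 and Proposition 4.16 of \cite{AABW}, Lemma 2.2 of \cite{DKKT2003}), so there is no internal argument to measure your proposal against.

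Your sketch captures the spirit of the classical Banach-space reasoning, but it leaves unproved precisely the step that carries the weight of the lemma, and that step is genuinely problematic in the $p$-Banach setting in which the statement is phrased. For the first bullet, after the dyadic decomposition you assert that within each level $\Lambda_j$ one may ``replace the true coefficients by the flat ones $2^j\varepsilon_n(x)$ at the cost of a further constant.'' For $p=1$ this is the standard Abel-summation (convexity) argument: order the level so the moduli $|e^*_{n_\ell}(x)|$ decrease, write the flat block as a linear combination of greedy partial sums with multipliers $d_\ell=2^j/|e^*_{n_\ell}(x)|\in(1/2,1]$, and use $\sum_\ell|d_\ell-d_{\ell+1}|+d_k\leqslant 2$. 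In a $p$-Banach space with $p<1$, however, the $p$-triangle inequality only controls $\bigl(\sum_\ell|d_\ell-d_{\ell+1}|^p+d_k^p\bigr)^{1/p}$, and this can blow up with the size of the level: taking $k-1$ increments each of size $\frac{1}{2(k-1)}$ gives $\sum_\ell|d_\ell-d_{\ell+1}|^p=2^{-p}(k-1)^{1-p}\to\infty$. So the flattening step does \emph{not} follow by the argument implicit in your sketch; overcoming this is exactly why \cite{AABW} develop different machinery. The same gap reappears, compounded, in the second bullet, where the passage from $1_{\varepsilon(x),A}$ to a general $\sum_{n\in A}a_ne_n$ with $|a_n|\leqslant 1$ and arbitrary signs (no longer matching $\varepsilon(x)$) is dismissed as ``again a dyadic-layering/flattening argument'' without addressing either the sign change or the $p<1$ obstruction. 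As it stands, the proposal proves the lemma only for Banach spaces, not for the $p$-Banach spaces the paper works in.
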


\begin{thm}(Extension of \cite[Theorem 1.7]{BC})\label{bcal2}
Let $\mathcal{F}$ be a family, and $\BB$ a basis of a $p$-Banach space $X$. The following are equivalent
\begin{enumerate}[\rm i)]
\item \label{FAG}$\BB$ is $\cF$-almost greedy. 
\item \label{QGFSDS}$\BB$ is quasi-greedy and  $\mathcal{F}$-strong disjoint superdemocratic. 
\item \label{QGFSDD} $\BB$ is quasi-greedy and  $\mathcal{F}$-strong disjoint democratic. 
\end{enumerate}
\end{thm}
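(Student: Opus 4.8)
The plan is to run the cycle $\ref{FAG}\Rightarrow\ref{QGFSDS}\Rightarrow\ref{QGFSDD}\Rightarrow\ref{FAG}$; the middle implication is immediate (take all signs equal to $1$). For $\ref{FAG}\Rightarrow\ref{QGFSDS}$, quasi-greediness is Lemma \ref{l0}, which together with Lemma \ref{lemmaTQG} also makes \eqref{tqg+} available; in particular, for such bases $\|1_{\varepsilon,G}\|$ and $\|1_G\|$ differ by at most a fixed factor, as do $\|1_{G_1}\|$ and $\|1_G\|$ whenever $G_1\subseteq G$. For the superdemocracy estimate, fix nonempty $A,B\in\NN^{<\infty}$ with $|A|\leqslant|B|$, signs $\varepsilon,\delta$, and $F\in\cF$ with $A\subseteq F$ and $B\cap F=\emptyset$. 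Shrinking $B$ to a subset of cardinality $|A|$ (at the cost of a constant), we may assume $|A|=|B|$. Set $w:=1_{\varepsilon,A}+1_{F\setminus A}+1_{\delta,B}$, whose nonzero coefficients all have modulus $1$ and are supported on the disjoint union $F\sqcup B$. Since $|F|\geqslant\min\cF$, we may apply \eqref{e4} with $m=|F|$; choosing the greedy set $\Lambda_{|F|}(w)=(F\setminus A)\cup B$ and using $P_F(w)=1_{\varepsilon,A}+1_{F\setminus A}$, we get
\[
\|1_{\varepsilon,A}\|\ =\ \|w-G_{|F|}(w)\|\ \leqslant\ \Delta\|w-P_F(w)\|\ =\ \Delta\|1_{\delta,B}\|,
\]
and undoing the reduction step finishes this implication.

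For the decisive implication $\ref{QGFSDD}\Rightarrow\ref{FAG}$, fix $x\in X$, $m\geqslant\min\cF$, a greedy set $\Lambda=\Lambda_m(x)$ with $|\Lambda|=m$, and $F\in\cF$ with $|F|\leqslant m$; put $z:=x-P_F(x)$ and $t:=\min_{n\in\Lambda}|e_n^*(x)|$. If $t=0$ then $x=P_\Lambda(x)$ and there is nothing to prove, so assume $t>0$. The argument rests on the identity
\[
x-P_\Lambda(x)\ =\ \bigl(z-P_{\Lambda\setminus F}(z)\bigr)+P_{F\setminus\Lambda}(x)
\]
(which uses $P_{\Lambda\setminus F}(z)=P_{\Lambda\setminus F}(x)$) and on the observation that $\Lambda\setminus F$ is a greedy set for $z$: off $\Lambda\setminus F$ every coordinate of $z$ either vanishes (if it lies in $F$) or has modulus $\leqslant t$ (if it lies outside $\Lambda$), while on $\Lambda\setminus F$ every coordinate of $z$ has modulus $\geqslant t$. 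Hence $z-P_{\Lambda\setminus F}(z)=z-G_{|\Lambda\setminus F|}(z)$, whose norm is $\lesssim\|z\|$ by suppression quasi-greediness. For the second term, $|F|\leqslant m=|\Lambda|$ gives $|F\setminus\Lambda|\leqslant|\Lambda\setminus F|$, so we may choose $B'\subseteq\Lambda\setminus F$ with $|B'|=|F\setminus\Lambda|$, and then chain: $\|P_{F\setminus\Lambda}(x)\|\lesssim t\,\|1_{\varepsilon(x),F\setminus\Lambda}\|\lesssim t\,\|1_{F\setminus\Lambda}\|$ via \eqref{tqg+} (all coordinates on $F\setminus\Lambda$ have modulus $\leqslant t$); $\|1_{F\setminus\Lambda}\|\leqslant\mu\,\|1_{B'}\|$ by $\cF$-strong disjoint democracy with witnessing set $F$; and $\|1_{B'}\|\lesssim\|1_{\varepsilon(x),B'}\|\lesssim t^{-1}\|z\|$, the last step because the coordinates of $z$ on $B'\subseteq\Lambda\setminus F$ have modulus $\geqslant t$, so \eqref{tqg+} applied to $z$ yields $\|t\,1_{\varepsilon(x),B'}\|\lesssim\|z\|$. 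Altogether $\|x-P_\Lambda(x)\|\lesssim\|z\|=\|x-P_F(x)\|$, and taking the infimum over admissible $F\in\cF$ gives \eqref{e4}.

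The genuinely delicate part is this last implication, and within it the obstacle is the bookkeeping forced by the non-hereditary setting. Hypothesis \ref{QGFSDD} only compares \emph{unsigned} indicators $1_A,1_B$ and insists that the witnessing set from $\cF$ contain $A$ and avoid $B$, so one cannot reduce to the hereditary statement Theorem \ref{bcal}; instead every passage between $\|P_{F\setminus\Lambda}(x)\|$, signed indicators, and $\|z\|$ has to be funneled through the truncation quasi-greedy inequality \eqref{tqg+}. The one genuinely new structural point --- and the step I expect to require the most care to phrase cleanly --- is the verification that $\Lambda\setminus F$ is a greedy set for $z=x-P_F(x)$, since this is exactly what allows suppression quasi-greediness to absorb the portion of the error supported on the large coefficients of $x$.
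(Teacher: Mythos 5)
Your proof is correct and follows essentially the same route as the paper's: the same implication cycle, the same auxiliary vector $1_{\varepsilon,A}+1_{F\setminus A}+1_{\delta,B}$ for i)$\Rightarrow$ii), and for iii)$\Rightarrow$i) the same identification of $\Lambda_m(x)\setminus F$ as a greedy set, the truncation inequality \eqref{tqg+}, and $\mathcal{F}$-strong disjoint democracy with witness $F$. The only differences are cosmetic: the paper does not shrink $B$ in the first implication, applies suppression quasi-greediness to $x-P_{F\cap\Lambda_m(x)}(x)$ rather than to $z=x-P_F(x)$ (an algebraically equivalent decomposition), and dispenses with your auxiliary subset $B'$ and the stopover through signed indicators by applying \eqref{tqg+} with constant scalars directly on $\Lambda_m(x)\setminus F$.
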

\begin{proof}
Let us show that \ref{FAG} implies \ref{QGFSDS}. Suppose that $\BB$ is $\Delta$-$\cF$-almost greedy. By Lemma~\ref{l0}, $\BB$ is quasi-greedy.  To see that it is $\cF$-strong disjoint superdemocratic, fix $F\in \cF$, $A\subset F$, and $B\in \NN^{<\infty}$ so that  $F\cap B=\emptyset$ and $|A|\leqslant |B|$. Fix signs $\varepsilon$, $\eta$. We have 
\begin{align*}
\|1_{\varepsilon, A}\|&\ =\ \|1_{\varepsilon, A}+1_{F\setminus A}+1_{\eta, B}-(1_{F\setminus A}+1_{\eta, B})\|\\
&\ \leqslant\  \Delta  \|1_{\varepsilon, A}+1_{F\setminus A}+1_{\eta, B}-P_F(1_{\varepsilon, A}+1_{F\setminus A}+1_{\eta, B})\|=\Delta\|1_{\eta,B}\|. 
\end{align*}

That \ref{QGFSDS} implies \ref{QGFSDD} is immediate. 

Finally, to see that \ref{QGFSDD} implies \ref{FAG}, we let $C$ be a constant for which \eqref{tqg+} holds, and suppose that $\mathcal{B}$ is $C_q$-suppression quasi-greedy and $C_d$-$\cF$-strong disjoint democratic. Fix $x\in X$, $m\in \NN$, $\Lambda_m(x)$, and $F\in \cF$ with $|F|\leqslant m$. We have
\begin{align*}
\|x-P_{\Lambda_m(x)}(x)\|^p&\ =\ \|x-P_{F\cap \Lambda_m(x)}(x)- P_{\Lambda_m(x)\setminus F}(x-P_{F\cap \Lambda_m(x)}(x))  \|^p\\
&\ \leqslant\ C_q^p\|x-P_{F\cap \Lambda_m(x)}(x)\|^p\\
&\ \leqslant\ C_q^p\|x-P_F(x)\|^p+C_q^p\|P_{F\setminus \Lambda_m(x)}(x)\|^p\\
& \ \leqslant\ C_q^p\|x-P_F(x)\|^p+C_q^pC^p\max_{n\in F\setminus \Lambda_m(x)}|e_n^*(x)|^p\|1_{F\setminus \Lambda_m(x)}\|^p\\
& \leqslant\  C_q^p\|x-P_F(x)\|^p+C_q^pC^pC_d^p\min_{n\in  \Lambda_m(x)\setminus F}|e_n^*(x)|^p\|1_{ \Lambda_m(x)\setminus F }\|^p\\
& \leqslant\  C_q^p(1+C_d^pC^{2p})\|x-P_F(x)\|^p. 
\end{align*}
Hence, $\mathcal{B}$ is $\cF$-almost greedy with a constant no greater than $C_q(1+C_d^pC^{2p})^{1/p}$. 
\end{proof}

\section{Equivalence requires bounded gaps}

We split Theorem \ref{m2} into two theorems. The proof of the first one is an application of Theorem \ref{m1}, while the second one calls for constructing a basis with certain properties.  

\begin{thm}\label{theorembounded}
If $(a_n)_n$ is bounded, then a basis is $\mathcal{F}_{(a_n)}$-almost greedy if and only if it is almost greedy. 
\end{thm}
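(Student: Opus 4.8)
The plan is to deduce the statement from Theorem~\ref{m1}: it suffices to check that, when $(a_n)_n$ is bounded, the family $\mathcal{F}_{(a_n)}$ is $N$-covering and $M$-sliding for suitable $N,M$, and then Theorem~\ref{m1} gives the equivalence with almost greediness at once. Throughout I would write $d:=\sup_n a_n<\infty$ and $s_\ell:=a_1+\cdots+a_\ell$ (with $s_0:=0$), so the nonempty members of $\mathcal{F}_{(a_n)}$ are exactly the sets $k+\{s_1,s_2,\ldots,s_\ell\}$ with $k\geqslant 0$ and $\ell\geqslant 1$; note $s_\ell\geqslant\ell$, so blocks of arbitrarily large cardinality are available.

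The sliding property is the easy half and needs no boundedness hypothesis: given $M\in\mathbb{N}$, the set $F:=M+\{s_1,\ldots,s_M\}$ lies in $\mathcal{F}_{(a_n)}$, has $|F|=M$, and since $s_1=a_1\geqslant 1$ we get $F\subseteq\{M+1,M+2,\ldots\}$, hence $F\cap\{1,\ldots,M\}=\emptyset$. So $\mathcal{F}_{(a_n)}$ is $0$-sliding.

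For the covering property, the key point — and the one step requiring genuine (though short) work — is that small translates of a single block fill in its gaps. Writing $G_\ell:=\{s_1,\ldots,s_\ell\}$, consecutive elements satisfy $s_{j+1}-s_j=a_{j+1}\leqslant d$, so for each $j<\ell$ one has $\{s_j,s_j+1,\ldots,s_j+d-1\}\supseteq\{s_j,\ldots,s_{j+1}-1\}$; chaining these intervals together and including the last point $s_\ell$ shows that $\bigcup_{i=0}^{d-1}(i+G_\ell)$ contains every integer in the interval $[a_1,s_\ell]$. Now given an arbitrary finite $B\subset\mathbb{N}$, pick $\ell$ with $s_\ell\geqslant\max B$; the $d$ sets $F_i:=(i-1)+G_\ell\in\mathcal{F}_{(a_n)}$, $1\leqslant i\leqslant d$, then cover all of $\{1,\ldots,s_\ell\}$ except possibly the elements of $\{1,\ldots,a_1-1\}$, so $\bigl|B\setminus\bigcup_{i=1}^{d}F_i\bigr|\leqslant a_1-1\leqslant d-1\leqslant d$. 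Hence $\mathcal{F}_{(a_n)}$ is $d$-covering.

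Having shown $\mathcal{F}_{(a_n)}$ is $d$-covering and $0$-sliding, I would conclude by invoking Theorem~\ref{m1} directly: a basis is $\mathcal{F}_{(a_n)}$-almost greedy if and only if it is almost greedy. The main (and essentially only) obstacle is the gap-filling lemma in the covering step; the rest is bookkeeping around the definitions.
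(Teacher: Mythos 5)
Your proof is correct and follows essentially the same strategy as the paper: invoke Theorem~\ref{m1} after verifying that $\mathcal{F}_{(a_n)}$ is $0$-sliding and $N$-covering for $N=\sup_n a_n$. The only real difference is in how the covering property is argued: the paper fixes $B$, centers translates of a block of length $\max B_2$ at $\min B_2 - a_1$, and derives a contradiction from a hypothetical uncovered element, whereas you isolate the cleaner observation that $\bigcup_{i=0}^{d-1}(i+G_\ell)$ fills the entire interval $[a_1,s_\ell]$ because consecutive gaps $s_{j+1}-s_j=a_{j+1}\leqslant d$ are bridged by $d$ shifts, and then simply take $\ell$ large enough that $s_\ell\geqslant\max B$. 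Your direct gap-filling argument is a bit tidier and avoids the case split on $|B_2|$; both yield the same covering constant up to a unit and both rest on the same underlying idea.
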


\begin{proof}
Let $(a_n)_n$ be bounded. It is easy to see that $\mathcal{F}_{(a_n)}$ is $0$-sliding. By Theorem \ref{m1}, we need only to verify that $\mathcal{F}_{(a_n)}$ is $M$-covering for $M = \max a_n$. Choose $B\in \mathbb{N}^{<\infty}$. Let $B_1 = \{n\in B: n < M\}$ and $B_2 = \{n\in B: n\geqslant M\}$. First, suppose that $|B_2|\geqslant 2$; hence, $\max B_2\geqslant 2$.

Consider $M$ following sets in $\mathcal{F}_{(a_n)}$:
\begin{align*}
S_1 &\ :=\ (\min B_2-a_1) + \{a_1, a_1 + a_2, \ldots, a_1 + \cdots + a_{\max B_2}\},\\
S_2 &\ :=\ (\min B_2 - a_1 + 1) + \{a_1, a_1 + a_2, \ldots, a_1 + \cdots + a_{\max B_2}\},\\
&\ \vdots\ \\
S_{M} &\ :=\ (\min B_2 - a_1 + (M-1)) + \{a_1, a_1 + a_2, \ldots, a_1 + \cdots + a_{\max B_2}\}.
\end{align*}
Pick $k\in B_2$. Suppose, for a contradiction, that $k\notin \cup_{i=1}^{M} S_i$. Then $k\geqslant \min B_2+M$ because $\{\min B_2, \min B_2+1,  \ldots, \min B_2+M-1\}\subset \cup_{i=1}^{M} S_i$. Furthermore, $k\notin \cup_{i=1}^{M} S_i$ implies that for all $1\leqslant i\leqslant M$,
\begin{equation}\label{e60}\ell_i \ :=\ k+a_1-\min B_2 - (i-1)\ \notin\ \{a_1, a_1+a_2, \ldots, a_1+\cdots+a_{\max B_2}\}.\end{equation}
Note that for all $1\leqslant i\leqslant M$, 
\begin{equation*}
    \ell_i\ \geqslant\ (\min B_2+M)+ a_1-\min B_2 - (M-1)\ = \ a_1+1.
\end{equation*}
\begin{align*}
\ell_i\ \leqslant\ k+a_1-\min B_2&\ \leqslant\ \max B_2 + a_1  - \min B_2\\
&\ \leqslant\ \max B_2\ =\ \underbrace{1 + \cdots + 1}_{\max B_2}\\
&\ \leqslant\ a_1 + a_2 + \ldots + a_{\max B_2}.
\end{align*}
Hence, $(\ell_i-a_1)_{i=1}^M$ are $M$ consecutive numbers in $\{1, 2, \ldots, a_2 + \cdots + a_{\max B_2}\}$, and $\{\ell_i-a_1\}_{i=1}^M\cap\{a_2, \ldots, a_2 + \cdots + a_{\max B_2}\} = \emptyset$ by \eqref{e60}. This contradicts that $\max a_n = M$. Hence, $B_2\subset \cup_{i=1}^M S_i$. Therefore, we have
$$|B\backslash \cup_{i=1}^M S_i|\ =\ |B_1| + |B_2\backslash  \cup_{i=1}^M S_i|\ \leqslant\ M-1.$$
This resolves the case when $|B_2|\geqslant 2$. If $|B_2|\leqslant 1$, then $|B|\leqslant M$, and for any $(S'_i)_{i=1}^M\subset \mathcal{F}_{(a_n)}$, $|B\backslash \cup_{i=1}^M S'_i|\leqslant M$. We conclude that $\mathcal{F}_{(a_n)}$ is $M$-covering. This completes our proof. 
\end{proof}

\begin{rek}\rm \label{remarksequencehereditary} Since $\mathcal{F}_{(a_n)}$ is $0$-sliding, Lemma~\ref{lemmasubsets} entails that if $\BB$ is $\mathcal{F}_{(a_n)}$-almost greedy, it is $\mathcal{P}\mathcal{F}_{(a_n)}$-almost greedy. 
\end{rek}

Next, we show that the converse of  Theorem~\ref{theorembounded} also holds. In other words, given an unbounded sequence $(a_n)_{n=1}^{\infty}\subset\NN$, we construct bases that are $\mathcal{F}_{(a_n)}$-almost greedy but not almost greedy. Moreover, we will show that this is possible for both conditional and unconditional Schauder bases. 
In our construction, we will use the following result from the literature. 

\begin{lem} (\cite{KT1999}, \cite[page 35]{T2011}, or \cite[page 266]{T2008}) \label{lemmakt} Let $X$ be the completion of $c_{00}$ with the norm 
$$
\left\| x=(x_n)_{n\in \NN}\right\|\ :=\ \max\left\{ \|x\|_{\ell_2},\sup_{m\in \NN}\left| \sum_{n=1}^m \frac{x_n}{n^{1/2}}\right|\right\}. 
$$
Then the canonical unit vector basis of $X$ is quasi-greedy and superdemocratic, with
$$
\|1_{\varepsilon, A}\|\ \approx\ |A|^{1/2},
$$
for every $A\in  \NN^{<\infty}$ and any sign $\varepsilon$. 
\end{lem}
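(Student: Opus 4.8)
\emph{Proof strategy.} I would first dispose of the democracy estimate, which falls out directly from the two terms of the norm. We always have $\|1_{\varepsilon,A}\|_{\ell_2}=|A|^{1/2}$, and writing $A=\{a_1<\dots<a_k\}$ with $k=|A|$ and using $a_i\geqslant i$,
$$
\sup_{r\in\NN}\Big|\sum_{n\in A,\ n\leqslant r}\frac{\varepsilon_n}{\sqrt n}\Big|\ \leqslant\ \sum_{i=1}^{k}\frac{1}{\sqrt{a_i}}\ \leqslant\ \sum_{i=1}^{k}\frac1{\sqrt i}\ \leqslant\ 2\sqrt k .
$$
Hence $|A|^{1/2}\leqslant\|1_{\varepsilon,A}\|\leqslant 2|A|^{1/2}$ for all finite $A$ and all signs $\varepsilon$, which is the claimed equivalence $\|1_{\varepsilon,A}\|\approx|A|^{1/2}$; superdemocracy (with constant $2$) is then immediate, since $|A|\leqslant|B|$ forces $\|1_{\varepsilon,A}\|\leqslant 2|A|^{1/2}\leqslant 2|B|^{1/2}\leqslant 2\|1_{\delta,B}\|$.

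For quasi-greediness I would prove that $\|G_m(x)\|\leqslant C\|x\|$ for an absolute constant $C$, every $x\in X$, every $m$, and every admissible $\Lambda:=\Lambda_m(x)$. Since $\|\cdot\|_{\ell_2}\leqslant\|\cdot\|$, the coefficients $e_n^*(x)$ tend to $0$, so $\Lambda$ is well defined and every sum below is a finite sum; in particular no density reduction is needed. Set $t:=\min_{n\in\Lambda}|e_n^*(x)|$. Two elementary facts drive the argument: by \eqref{e101}, $|e_n^*(x)|\leqslant t$ whenever $n\notin\Lambda$; and since $\Lambda$ consists of $m$ indices of modulus $\geqslant t$, $t\sqrt m\leqslant\|x\|_{\ell_2}\leqslant\|x\|$. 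The $\ell_2$-part of $\|G_m(x)\|$ is trivially $\leqslant\|x\|$, so everything reduces to bounding $\big|\sum_{n\in\Lambda,\ n\leqslant M}e_n^*(x)/\sqrt n\big|$ uniformly in $M$.

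Here is the key idea, and the step I expect to be the crux: one must \emph{not} estimate this $\Lambda$-sum coordinatewise (a triangle-inequality bound costs a spurious factor of order $\sqrt{\log}$), but instead compare it with the full partial sum $\sum_{n\leqslant M}e_n^*(x)/\sqrt n$, whose modulus is $\leqslant\|x\|$ by definition of the norm, and absorb the difference, which is supported off $\Lambda$ where coefficients are $\leqslant t$, via $t\sqrt m\leqslant\|x\|$. Concretely I would split at $2m$: for $M\leqslant 2m$,
$$
\Big|\sum_{n\in\Lambda,\ n\leqslant M}\frac{e_n^*(x)}{\sqrt n}\Big|\ \leqslant\ \Big|\sum_{n\leqslant M}\frac{e_n^*(x)}{\sqrt n}\Big|+\sum_{\substack{n\leqslant M\\ n\notin\Lambda}}\frac{|e_n^*(x)|}{\sqrt n}\ \leqslant\ \|x\|+t\sum_{n=1}^{2m}\frac1{\sqrt n}\ \leqslant\ \|x\|+2\sqrt2\,t\sqrt m\ \leqslant\ (1+2\sqrt2)\|x\| ,
$$
and for $M>2m$ I would split $\Lambda\cap[1,M]$ into $\Lambda\cap[1,2m]$, handled by the previous bound, and $\Lambda\cap(2m,M]$, on which $n^{-1}\leqslant(2m)^{-1}$ and $|\Lambda\cap(2m,M]|\leqslant m$, so Cauchy--Schwarz gives
$$
\Big|\sum_{\substack{n\in\Lambda\\ 2m<n\leqslant M}}\frac{e_n^*(x)}{\sqrt n}\Big|\ \leqslant\ \Big(\sum_{n\in\Lambda}|e_n^*(x)|^2\Big)^{1/2}\Big(\sum_{\substack{n\in\Lambda\\ 2m<n\leqslant M}}\frac1n\Big)^{1/2}\ \leqslant\ \frac{\|x\|_{\ell_2}}{\sqrt2}\ \leqslant\ \|x\| .
$$

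Combining the two cases yields $\sup_M\big|\sum_{n\in\Lambda,\ n\leqslant M}e_n^*(x)/\sqrt n\big|\leqslant(2+2\sqrt2)\|x\|$, hence $\|G_m(x)\|\leqslant(2+2\sqrt2)\|x\|$, so the unit vector basis is quasi-greedy (with suppression quasi-greedy constant at most $3+2\sqrt2$). Together with the democracy estimate of the first paragraph, this proves all three assertions of the lemma. The only genuinely delicate point is the comparison idea in the third paragraph — recognizing that the Hardy-type partial sum of the greedy projection cannot be controlled term by term and must be played against the boundedness of the full partial sums of $x$ and the inequality $t\sqrt m\leqslant\|x\|$; once the splitting point $2m$ is chosen, the rest is routine bookkeeping.
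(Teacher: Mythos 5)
Your proof is correct. Note that the paper does not prove Lemma \ref{lemmakt} at all — it is quoted from Konyagin--Temlyakov and Temlyakov's books — so there is no in-paper argument to compare against; what you have written is essentially the standard argument from those references: the democracy bound $|A|^{1/2}\leqslant\|1_{\varepsilon,A}\|\leqslant 2|A|^{1/2}$ from $\sum_{i\leqslant k}i^{-1/2}\leqslant 2\sqrt{k}$, and quasi-greediness by playing the Hardy-type partial sums of $P_{\Lambda}(x)$ against the full partial sums of $x$ (bounded by $\|x\|$), absorbing the off-$\Lambda$ remainder via $|e_n^*(x)|\leqslant t$ and $t\sqrt{m}\leqslant\|x\|_{\ell_2}\leqslant\|x\|$, with the tail beyond $2m$ handled by Cauchy--Schwarz. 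All the individual estimates check out (including the extension of $\|x\|_{\ell_2}\leqslant\|x\|$ and of the partial-sum bound to the completion, which you implicitly use and which follows by continuity), and the constants you record are consistent.
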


\begin{thm}\label{propunbounded} Let $(a_n)_{n=1}^{\infty}\subset\NN$ be an unbounded sequence. 
\begin{enumerate}
\item \label{conditional}There exists a conditional, quasi-greedy, Schauder basis $\mathcal{B}$ that is not democratic but is $\mathcal{P}\mathcal{F}_{(a_n)}$-superdemocratic. 
In particular, for any sign $\varepsilon$, 
\begin{equation}\label{democracy}
\|1_{\varepsilon, A}\|\ \approx\ |A|^{1/2}, \forall A\in \mathcal{PF}_{(a_n)}\mbox{ and } \|1_{\varepsilon, A}\|\ \geqslant\ |A|^{1/2}, \forall A\in\mathbb{N}^{<\infty}. 
\end{equation}
Hence, $\BB$ is $\mathcal{F}_{(a_n)}$-almost greedy but not almost greedy. 
\item\label{unconditional}There is an unconditional Schauder basis for which \eqref{democracy} holds, but the basis is not democratic. 
\end{enumerate}
\end{thm}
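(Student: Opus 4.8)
The plan is to prove Theorem~\ref{propunbounded} by an explicit construction in two parts, starting from the Konyagin--Temlyakov space $X$ of Lemma~\ref{lemmakt} whose unit vector basis $(e_n)_n$ is quasi-greedy, superdemocratic with $\|1_{\varepsilon,A}\|\approx|A|^{1/2}$. Since $(a_n)_n$ is unbounded, one can select a strictly increasing sequence of indices $\ell_1<\ell_2<\cdots$ along which $a_{\ell_j}$ grows (say $a_{\ell_j}\ge j$ or faster), and this forces any set in $\mathcal{PF}_{(a_n)}$ to be very ``spread out'' near the corresponding positions: a set $A\in\mathcal{F}_{(a_n)}$ of the form $k+\{a_1,a_1+a_2,\dots\}$ cannot contain two consecutive integers once the cumulative gaps exceed $1$, and more importantly cannot contain long arithmetic-progression-free blocks that one would need to build a conditional phenomenon. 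The key structural observation to isolate first is: there is a sparse ``marked'' set $D\subset\NN$ (built from the large-gap positions) such that every $A\in\mathcal{PF}_{(a_n)}$ satisfies $|A\cap D|\le c$ for an absolute constant, or more precisely $A$ meets $D$ in a controlled, sparse way — this is what will let a modification supported on $D$ be invisible to $\mathcal{PF}_{(a_n)}$-democracy while breaking global democracy.

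For part~\eqref{conditional}, I would build $\mathcal{B}$ as a ``sum'' / perturbation of the KT basis: take $X$ as above and define new basis vectors by a triangular (upper-triangular, hence Schauder-preserving) change of variables that conditionalizes the basis along the sparse marked set $D$, in the spirit of the classical constructions producing conditional quasi-greedy bases (e.g.\ the Dilworth--Kalton--Kutzarova--Temlyakov type constructions, or the ``twisting'' used in \cite{AK, DKKT2003}). Concretely, one replaces $e_{\ell_j}$ (for the marked indices) by combinations that introduce a growing conditionality constant while leaving untouched the subspace spanned by the unmarked indices and while keeping the fundamental function $\|1_{\varepsilon,A}\|\approx|A|^{1/2}$ for all $A\in\mathcal{PF}_{(a_n)}$ — which is possible precisely because such $A$ avoids the delicate marked structure. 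Quasi-greediness is preserved because the perturbation is a ``small'' (summably controlled) block-triangular modification, and conditionality follows by exhibiting vectors whose partial sums blow up. The failure of democracy comes from comparing $1_A$ for $A$ a large marked set (where the new norm is much larger or much smaller than $|A|^{1/2}$) against $1_B$ for an unmarked set of the same size. Then by Theorem~\ref{bcal2} (or the already-recorded Remark~\ref{remarksequencehereditary} plus Theorem~\ref{bc}), $\mathcal{PF}_{(a_n)}$-superdemocracy plus quasi-greediness gives $\mathcal{PF}_{(a_n)}$-almost greedy, hence $\mathcal{F}_{(a_n)}$-almost greedy; but non-democracy plus Theorem~\ref{dkkt} rules out almost greedy.

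For part~\eqref{unconditional}, the construction is easier: take a direct sum $X=Y\oplus Z$ where $Y$ is a space with a $1$-unconditional, superdemocratic basis with fundamental function $|A|^{1/2}$ supported on the unmarked indices, and $Z$ carries a $1$-unconditional basis with a genuinely different (non-$|A|^{1/2}$) fundamental function — for instance $\ell_1$ or a Lorentz-type sequence space — arranged along the sparse marked set $D$ so that every $A\in\mathcal{PF}_{(a_n)}$ has $A\cap D$ of bounded size. Interleaving these two bases according to the index positions gives an unconditional Schauder basis; unconditionality is inherited from the summands, and since sets in $\mathcal{PF}_{(a_n)}$ see essentially only the $Y$-part, \eqref{democracy} holds; but $1_D$ for large $D\subset$ the marked set has norm incomparable to $|D|^{1/2}$, so the whole basis is not democratic.

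The main obstacle I anticipate is the conditional construction in part~\eqref{conditional}: one must simultaneously (i) keep the basis Schauder (upper-triangular change of coordinates), (ii) keep it quasi-greedy (the perturbation must be controlled in the relevant operator sense — the standard trick is to make the blocks grow slowly enough that the quasi-greedy constant stays finite, using the characterization of quasi-greedy via truncation/Lemma~\ref{lemmaTQG}), (iii) make it genuinely conditional (partial sums unbounded), and (iv) preserve $\|1_{\varepsilon,A}\|\approx|A|^{1/2}$ for all $A\in\mathcal{PF}_{(a_n)}$ with the implied constants uniform. Reconciling (iii) with (iv) is the crux: conditionality must be ``hidden'' entirely inside index configurations that $\mathcal{PF}_{(a_n)}$ cannot realize, and verifying that the unbounded-gap hypothesis really does exclude those configurations — i.e.\ the combinatorial lemma that $A\in\mathcal{PF}_{(a_n)}$ forces $A$ to avoid the marked blocks except in a bounded way — is where the hypothesis gets used and where I would spend the most care. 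I expect the rest (quasi-greediness and Schauder-ness of the perturbation, the easy non-democracy estimates) to be routine given the tools already assembled in the paper.
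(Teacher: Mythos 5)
Your high-level strategy is the same as the paper's: use the unbounded gaps in $(a_n)$ to plant a sparse family of ``marked'' blocks that sets in $\mathcal{PF}_{(a_n)}$ can hit only in a controlled way, design a norm component that inflates $\|1_A\|$ when $A$ spans many blocks (breaking democracy) but stays $\approx|A|^{1/2}$ when $A$ hits at most one (so $\mathcal{PF}_{(a_n)}$-superdemocracy survives), and graft on a conditionality mechanism that lives away from this structure. That is exactly the skeleton of the paper's proof. However, there are two places where your sketch either mis-states the key fact or diverges onto a route that is harder to close, and I want to flag both.

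First, the combinatorial lemma. You write that every $A\in\mathcal{PF}_{(a_n)}$ should satisfy $|A\cap D|\le c$ for an absolute constant. This is false in general: unboundedness of $(a_n)$ is compatible with arbitrarily long runs of $a_n=1$, so a single set $F\in\mathcal{F}_{(a_n)}$ can contain arbitrarily long intervals and therefore can contain an entire marked block $A_k$ (whose size $k$ is unbounded). What the paper actually proves (Claim~3.3 there) is that each $A\in\mathcal{PF}_{(a_n)}$ meets \emph{at most one} block $A_k$, and this is obtained by recursively choosing the indices $n_k$ so that $b_{n_k+1}-b_{n_k}>2k^6+b_{n_{k-1}+1}$ and then placing $A_k$ in the window $(b_{n_k}+b_{n_{k-1}+1}+k,\ b_{n_k+1})$; the inequalities $b_{n_k}+b_{n_{k-1}+1}+k < A_k < b_{n_k+1}$ are used twice in the claim to derive a contradiction. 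The ``at most one block'' version still does not bound $|A\cap D|$, so the per-block contribution must be tamed separately: the paper does this by weighting the $\|\cdot\|_\circ$ seminorm with Lorentz coefficients $\bigl(\frac{k(k-1)}{2}+j\bigr)^{-1/q}$ so that $\|1_{\varepsilon,A}\|_\circ\le 4|A|^{1/2}$ whenever $A$ meets only one $A_k$, yet $\|1_{D_s}\|_\circ\gtrsim |D_s|^{1/p}$ with $p<2$. Your hedge ``more precisely $A$ meets $D$ in a controlled, sparse way'' is vague at precisely the point where the construction has to be delicate, so as written the gap is genuine.

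Second, the conditionality mechanism. You propose an upper-triangular change of basis that ``twists'' the marked coordinates, which then obliges you to re-verify Schauderness, quasi-greediness (via Lemma~\ref{lemmaTQG} or otherwise), conditionality, \emph{and} the fundamental-function estimate \eqref{democracy} for the new system, each of which is a nontrivial perturbation argument. The paper sidesteps all of this by keeping the canonical unit vector basis and only changing the norm: it includes the Konyagin--Temlyakov summing-condition seminorm $\|x\|_\triangleleft=\sup_n|\sum_{k\le n}x_k k^{-1/2}|$ alongside $\|\cdot\|_{\ell_2}$, so that quasi-greediness with respect to $\max\{\|\cdot\|_\triangleleft,\|\cdot\|_{\ell_2}\}$ is inherited directly from Lemma~\ref{lemmakt}, monotonicity/Schauderness is immediate, the conditionality witness is the alternating vector $y_m=\sum_j(-1)^j e_j/j^{1/2}$, and the only thing left to check is that $\|y_m\|_\circ$ stays bounded — which is where the rapid growth $2k^6$ in the recursive choice of $n_k$ is used. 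Part~\eqref{unconditional} is then obtained by simply dropping $\|\cdot\|_\triangleleft$, which is much lighter than your proposed direct-sum interleaving. So your route is plausible in outline, but it is a genuinely more burdensome path and you have not discharged the obligations it creates; the paper's ``add a seminorm, keep the basis'' approach is the one that actually closes cleanly.
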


\begin{proof}
(1) To simplify notation, let $\mathcal{F}:=\mathcal{F}_{(a_n)}$. First, note that if $\BB$ is a conditional quasi-greedy Schauder basis for which \eqref{democracy} holds, by Theorem \ref{bcal}, it is $\mathcal{P}\mathcal{F}$-almost greedy and thus, is $\mathcal{F}$-almost greedy. To construct a basis with the desired properties, for each integer $n\in \mathbb{N}$, set
$$
b_n\ :=\ \sum_{k=1}^{n} a_k.
$$
Then 
$$
\mathcal{F}\ =\ \{\emptyset\}\cup \left\{ j+\left\{ b_n\right\}_{1\leqslant n\leqslant \ell}\, :\,\ell\geqslant 1, j\geqslant 0 \right\}. 
$$
Since $(a_n)_n$ is unbounded, we can choose a strictly increasing sequence $(n_k)_{k\geqslant 0}$ recursively so that $n_0 = 1$ and for $k\geqslant 1$,
\begin{equation*}
a_{n_{k}+1}\ >\ 2 k^6+b_{n_{k-1}+1}.
\end{equation*}
Therefore, for all $k\in \mathbb{N}$,
\begin{equation}\label{biggaps}
b_{n_{k}+1}-b_{n_k}\ > \ 2k^6+b_{n_{k-1}+1}.
\end{equation}

Now pick intervals $(A_k)_{k\in \NN}$ of natural numbers so that, for all $k\in \NN$, 
\begin{align}\label{separatedsets1}|A_k|&\ =\ k,\\
\label{separatedsets2}
b_{n_{k}}+b_{{n_{k-1}}+1}+k&\ <\ A_k\ <\ b_{{n_k}+1}. 
\end{align}
For example, we can take
\begin{align*}
A_k&\ =\ \{b_{n_k}+b_{n_{k-1}+1}+k+1, \ldots, b_{n_k}+b_{n_{k-1}+1}+2k\}.
\end{align*}
\begin{claim}\label{claim3}For each $A\in \mathcal{P}\mathcal{F}$, we have 
$$N(A)\ :=\ \left|\{k\in \NN\,:\, A_k\cap A\not =\emptyset\}\right| \ \leqslant\ 1.$$ 
\end{claim}
\begin{proof}
It suffices to prove the claim for $A\in \mathcal{F}$. Suppose $N(A)\geqslant 2$. Pick $j\geqslant 0, \ell\geqslant 1$ so that $A= j+\left\{ b_n\right\}_{1\leqslant n\leqslant \ell}$. Let 
$$
s\ :=\ \max\{k\in \NN\,:\, A_{k+1}\cap A\not=\emptyset\},
$$
and choose $1\leqslant n(A)\leqslant \ell$ so that $j+b_{n(A)}\in A_{s+1}$. It follows from \eqref{separatedsets2} that 
$$b_{n(A)}\ \leqslant\ j+b_{n(A)} \ <\ b_{n_{s+1}+1};$$
hence, $n(A)\leqslant n_{s+1}$. By \eqref{separatedsets2},
$$j+b_{n(A)}\ >\ b_{n_{s+1}}+b_{n_s+1}+s+1,$$
which, combined with $n(A)\leqslant n_{s+1}$, gives
$$j\ >\ b_{n_s+1}+s+1.$$
In particular, $A>b_{n_{s}+1}$. It follows again from \eqref{separatedsets2} that
$$
A\cap A_k\ =\ \emptyset,\mbox{ for all }1\leqslant k\leqslant s, 
$$
a contradiction that proves our claim. 
\end{proof}
Now we construct our basis, borrowing ideas from an example in \cite{KT1999}. 
Pick $1<p<2$ so that if $1/p+1/q=1$, then $1/p-1/q\leqslant 1/2$.
For each $k\in \NN$, write $A_k=\{c_{k,1},\dots,c_{k,k}\}$, and define on $c_{00}$ the seminorm 
$$
\left\| x=(x_n)_{n\in \NN}\right\|_{\circ}\ :=\ \sum_{k\in \mathbb{N}}\sum_{j=1}^{k}\left| x_{c_{k,j}}\left(\frac{k(k-1)}{2}+j\right)^{-1/q} \right|. 
$$
Now define the norm
$$
\left\| x=(x_n)_{n\in \NN}\right\|_{\triangleleft}\ :=\ \sup_{n\in \NN}\left| \sum_{k=1}^{n}\frac{x_k}{k^{1/2}}\right|,
$$
and let $X$ be the completion of $c_{00}$ with respect to the norm
$$
\|x\|\ :=\ \max\{\|x\|_{\circ}, \|x\|_{\triangleleft}, \|x\|_{\ell_2}\}.
$$
Let $\BB$ be $X$'s canonical vector basis. Then $\BB$ is a monotone normalized Schauder basis. Also,  $\BB$ is unconditional with respect to $\|\cdot\|_{\circ}$, and it is known that it is quasi-greedy with respect to $\max\{\|\cdot\|_{\triangleleft}, \|\cdot\|_{\ell_{2}}\}$ (by Lemma~\ref{lemmakt}). Hence, $\BB$ is quasi-greedy with respect to $\|\cdot\|$. To prove \eqref{democracy}, first notice that  for any $A\in \NN^{<\infty}$ and any sign $\varepsilon$,
$$
\|1_{\varepsilon, A}\|\ \geqslant\  \|1_{\varepsilon, A}\|_{\ell_2}\ =\ |A|^{1/2},
$$
whereas 
$$
\|1_{\varepsilon, A}\|_{\triangleleft}\ \leqslant\ \sum_{k=1}^{|A|}\frac{1}{k^{1/2}}\ \leqslant\ 2|A|^{1/2}. 
$$
Now suppose $A\in \mathcal{P}\mathcal{F}$, $A\not=\emptyset$, and $\varepsilon$ is any sign. If $N(A)=0$, then by the above computations, $\|1_{\varepsilon, A}\|\leqslant 2|A|^{1/2}$. Otherwise, by Claim~\ref{claim3}, we have $N(A)=1$. Let $k$ be the unique natural number for which $A_k\cap A\neq\emptyset$. We proceed by case analysis. 

If either $k\leqslant 4$ or $|A|\leqslant 4$,  then 
$$
\|1_{\varepsilon, A}\|_{\circ}\ \leqslant\ |A_k\cap A|\ \leqslant\ 4\ \leqslant\ 4|A|^{1/2}.
$$

If $5\leqslant k\leqslant |A|$, then 
\begin{align*}
\|1_{\varepsilon, A}\|_{\circ}&\ \leqslant\ \sum_{j=1}^{k}\left(\frac{k(k-1)}{2}+j\right)^{-1/q}\ \leqslant\ \left(\frac{k(k-1)}{2}\right)^{-1/q}k\ \leqslant\ 2 k^{1/p}(k-1)^{-1/q}\\
&\ \leqslant\ 2\left(\frac{k}{k-1}\right)^{1/q}k^{1/p-1/q}\ \leqslant\ 4k^{1/2}\ \leqslant\ 4|A|^{1/2}.
\end{align*}

If $5\leqslant |A|\leqslant k$, then 
\begin{align*}
\|1_{\varepsilon, A}\|_{\circ}&\ \leqslant\ \sum_{j=1}^{|A|}\left(\frac{k(k-1)}{2}+j\right)^{-1/q}\ \leqslant\ \left(\frac{|A|(|A|-1)}{2}\right)^{-1/q}|A|\\
&\ \leqslant\ 2 |A|^{1/p}(|A|-1)^{-1/q}\\
&\ \leqslant\ 2\left(\frac{|A|}{|A|-1}\right)^{1/q}|A|^{1/p-1/q}\ \leqslant\ 4|A|^{1/2}. 
\end{align*}
As we have considered all cases, it follows that $\|1_{\varepsilon, A}\|\leqslant 4|A|^{1/2}$, and the proof of \eqref{democracy} is complete. 

To see that $\BB$ is not democratic, for each $k\in \mathbb{N}$, set $D_k:=\cup_{j=1}^{k}A_j$ so that $|D_k|= k(k+1)/2$. Choose an arbitrary $m>4$ and let
$$
s \ :=\ \max\left\{ k\in \mathbb{N}\,:\, k(k+1)/2\leqslant m\right\} \ \geqslant\ 2. 
$$
Then
$$
|D_s|\ \leqslant\ m\ <\ \frac{(s+1)(s+2)}{2} \ \leqslant\ \frac{3s(s+1)}{2}\ =\ 3|D_s|,
$$
and 
\begin{align*}
\|1_{D_{s}}\|_{\circ}&\ = \ \sum_{k=1}^{s}\sum_{j=1}^{k}\left(\frac{k(k-1)}{2}+j\right)^{-1/q}\ =\ \sum_{j=1}^{|D_{s}|}j^{-1/q}\\
&\ \geqslant\  p\left(\left(|D_{s}|+1\right)^{1/p}-1\right)\ \geqslant\ p|D_s|^{1/p}-p\\
&\ =\ \frac{1}{2}|D_s|^{1/p}+\left(p-\frac{1}{2}\right)\left(|D_s|^{1/p}-\frac{2p}{2p-1}\right)\\
&\ \geqslant\ \frac{1}{2} |D_s|^{1/p} \ \geqslant\ \frac{1}{6}m^{1/p}.
\end{align*}

Now pick $B\in \NN^{<\infty}$ so that $|B_m:=D_{s}\sqcup B|=m$ and note that $\|1_{B_m}\|_{\circ}\geqslant \|1_{D_{s}}\|_{\circ}$. Also pick $E_m\in\mathcal{F}$ with $|E_m| = m$. It follows that
\begin{equation}\label{e200}
\lim_{m\to \infty}\frac{\|1_{B_m}\|}{\|1_{E_m}\|}\ =\ \lim_{m\rightarrow\infty} \frac{m^{1/p}}{m^{1/2}}\ =\ \infty,
\end{equation}
so $\BB$ is not democratic.

To complete the proof of (1), we need to verify the conditionality of $\BB$. To that end, for each $m\geqslant 2$, let 
$$
x_m\ :=\ \sum_{j=1}^{m}\frac{e_j}{j^{1/2}} \mbox{ and } y_m\ :=\ \sum_{j=1}^{m}(-1)^{j}\frac{e_j}{j^{1/2}}. 
$$
We have 
$$
\|x_m\|_{\triangleleft}\ \approx\ \log(m), \|y_m\|_{\triangleleft}\ =\ 1, \mbox{ and }\|y_m\|_{\ell_2}\ \approx\ \log^{1/2}(m). 
$$
We are done by  showing that $(\|y_m\|_{\circ})_{m\in \mathbb{N}}$ is uniformly bounded. Indeed, it follows from \eqref{biggaps} that for each $m\in \mathbb{N}$,
\begin{align*}
\|y_m\|_{\circ}&\ \leqslant\  \sum_{k\in \NN}\sum_{j=1}^{k}\frac{1}{(\min A_k)^{1/2}}\left(\frac{k(k-1)}{2}+j\right)^{-1/q}\\
&\ \leqslant\ \sum_{k\in \mathbb{N}}\frac{k}{(\min A_k)^{1/2}}\ <\ \sum_{k\in \mathbb{N}}\frac{k}{b^{1/2}_{n_{k-1}+1}}\\
&\ =\ \frac{1}{b^{1/2}_2}+\sum_{k=1}^\infty \frac{k+1}{b^{1/2}_{n_{k}+1}}\ <\ \frac{1}{b^{1/2}_2}+\sum_{k=1}^\infty \frac{k+1}{\sqrt{2}k^3} \ <\ \infty.
\end{align*}

(2) This is proven with a simplification of the basis in (1). Indeed, if one removes the norm $\|\cdot\|_{\triangleleft}$ from the construction, it is easy to check that $\BB$ is unconditional, and the relevant part of the proof of (1) also proves \eqref{democracy} in this case. 
\end{proof}

\section{On $\mathcal{F}_{(a_n)}$-strong partially greedy bases}

Partially greedy Schauder bases were introduced by Dilworth et al. \cite{DKKT2003} to compare the efficiency of the TGA to that of partial summations $\sum_{n=1}^m e_n^*(x)e_n$. Later, Berasategui et al. \cite{BBL2021} extended the notion to general bases, called strong partially greedy bases. The main goals of this section is to establish the analog of Theorems \ref{bcal2} and \ref{m2} for $\mathcal{F}$-strong partially greedy bases.

\begin{defi}\normalfont (\cite[(1.4)]{DKKT2003} and \cite[Definition 1.1]{BBL2021})
A basis of a $p$-Banach space is said to be strong partially greedy if there exists $C > 0$ such that 
$$\|x-G_m(x)\|\ \leqslant\ C\inf_{k\leqslant m}\|x-S_k(x)\|, \mbox{ for all }x\in X, m\in \mathbb{N}, \mbox{ and }G_m(x).$$
\end{defi}

We can characterize strong partially greedy bases using the conservative property. 
\begin{defi}\normalfont
A basis is conservative if there is $C > 0$ such that $\|1_A\|\leqslant C\|1_B\|$ whenever $A, B\in \mathbb{N}^{<\infty}$ with $|A|\leqslant |B|$ and $A < B$.
\end{defi}

\begin{thm}(\cite[Theorem 3.4]{DKKT2003} and \cite[Theorem 4.2]{B2})\label{charPG}
A basis is strong partially greedy if and only if it is quasi-greedy and conservative. 
\end{thm}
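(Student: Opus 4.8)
The plan is to establish the two implications of Theorem~\ref{charPG} by hand, invoking the earlier results only through the truncation-quasi-greedy estimate \eqref{tqg+} of Lemma~\ref{lemmaTQG}.

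For the forward implication, quasi-greediness is immediate: putting $k=0$ (so that $S_0(x)=0$) in the definition of strong partial greediness gives $\|x-G_m(x)\|\le C\|x\|$. To obtain conservativeness I would fix $A<B$ with $|A|\le|B|$, write $r:=\max A$, and test the basis on
$$
x\ :=\ 1_A\ +\ (1+\delta)\,1_{\{1,\dots,r\}\setminus A}\ +\ (1+\delta)\,1_{B},
$$
the coefficients on $\{1,\dots,r\}\setminus A$ and on $B$ being perturbed to pairwise distinct values in $(1,1+2\delta)$ so that the greedy sums are unambiguous. With $m:=|B|+r-|A|$ and $k:=r$, the hypothesis $|A|\le|B|$ is exactly what makes $k\le m$. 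The $m$ coordinates of largest modulus are those in $B\cup(\{1,\dots,r\}\setminus A)$, so $x-G_m(x)=1_A$, whereas $S_k(x)$ captures precisely the initial block $\{1,\dots,r\}$, so $x-S_k(x)=(1+\delta)1_B$. Strong partial greediness then gives $\|1_A\|\le C(1+\delta)\|1_B\|$, and letting $\delta\to0$ yields conservativeness with constant $C$.

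For the reverse implication I would let $\mathcal B$ be $C_q$-quasi-greedy — hence truncation quasi-greedy, so \eqref{tqg+} holds with some constant $C$ — and $C_c$-conservative, and fix $x$, $m$, a greedy set $\Lambda:=\Lambda_m(x)$, and $k\le m$. Writing $z:=x-S_k(x)$, $\Gamma:=\Lambda\cap(k,\infty)$, and $F:=\{1,\dots,k\}\setminus\Lambda$, one checks the identity
$$
x-P_\Lambda(x)\ =\ z-P_\Gamma(x)+P_F(x),
$$
so by the $p$-triangle inequality it is enough to bound $\|P_\Gamma(x)\|$ and $\|P_F(x)\|$ by a constant multiple of $\|z\|$. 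Since every coordinate of $z$ outside $\Gamma$ either vanishes or sits below the greedy threshold of $x$, the set $\Gamma$ is a valid greedy set for $z$, so $P_\Gamma(x)=G_{|\Gamma|}(z)$ and quasi-greediness gives $\|P_\Gamma(x)\|\le C_q\|z\|$. For $\|P_F(x)\|$, set $t:=\max_{n\in F}|e_n^*(x)|$; as $F\cap\Lambda=\emptyset$ and $\Lambda$ is greedy, $t\le\min_{n\in\Gamma}|e_n^*(x)|$. Applying \eqref{tqg+} with test vector $t\,1_F$ gives $\|P_F(x)\|\le C\,t\,\|1_F\|$; since $F<\Gamma$ and $|F|\le|\Gamma|$ (because $|\Lambda|=m\ge k$), conservativeness gives $\|1_F\|\le C_c\|1_\Gamma\|$; and a second application of \eqref{tqg+}, now with test vector $P_\Gamma(z)$ and constant coefficients $t$, gives $t\,\|1_\Gamma\|\le C\,\|P_\Gamma(x)\|\le CC_q\|z\|$. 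Chaining these estimates bounds $\|P_F(x)\|$ by $C^2C_cC_q\|z\|$, and assembling the pieces yields $\|x-G_m(x)\|\le\big(1+C_q^{\,p}+C^{2p}C_c^{\,p}C_q^{\,p}\big)^{1/p}\|z\|$; taking the infimum over $0\le k\le m$ completes the proof.

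I expect the main obstacle to lie in two places. In the forward direction, the point is to strip $1_B$ off with a greedy sum while keeping $1_A$ as the residual \emph{and} keeping $k\le m$; the filler block $\{1,\dots,r\}\setminus A$ with intermediate coefficients is what reconciles these demands, and it is precisely here that $|A|\le|B|$ enters. In the reverse direction the delicate estimate is the one for $\|P_F(x)\|$: the coordinates of $F$ have small modulus but form part of an initial segment rather than a tail, so quasi-greediness cannot be applied to them directly, and one must route the estimate through conservativeness together with \eqref{tqg+}.
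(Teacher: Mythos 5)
The paper cites this theorem (\cite[Theorem 3.4]{DKKT2003}, \cite[Theorem 4.2]{B2}) without reproducing a proof, but your argument is correct and is essentially the specialization to $F=\{1,\dots,k\}$ of the paper's own proof of the analogous $\cF$-version, Theorem \ref{charFPG}: the forward direction tests on $1_A+1_{\{1,\dots,r\}\setminus A}+1_B$ and compares a greedy sum to a partial sum, while the reverse splits $x-P_{\Lambda_m(x)}(x)$ across the initial segment $\{1,\dots,k\}$ and its complement, dispatching the tail term by quasi-greediness and the small-coefficient head term through \eqref{tqg+} together with conservativeness. The only cosmetic difference is your $\delta$-perturbation in the forward direction, which is unnecessary: since the definition quantifies over every choice of $G_m(x)$, one may simply take $\Lambda_m(x)=(\{1,\dots,r\}\setminus A)\cup B$ for the unperturbed vector, exactly as the paper does in the proof of Theorem \ref{charFPG}.
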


We now define $\mathcal{F}$-strong partially greedy bases in the same spirit as $\mathcal{F}$-greedy and $\mathcal{F}$-almost greedy bases in \cite{BC}. Again, we can and shall assume that $\emptyset\in \mathcal{F}$; see Lemma \ref{pempty} for details.
\begin{defi}\normalfont\label{definitionFpartiallygreedy2}
A basis is said to be $\mathcal{F}$-strong partially greedy if there exists $\Delta\geqslant 1$ such that
\begin{equation}\label{e100}\|x-P_{\Lambda_m(x)}(x)\| \ \leqslant\ \Delta\inf_{\substack{F\in \mathcal{F}, |F|\leqslant m\\    F\setminus \Lambda_m(x) < \Lambda_m(x)\setminus F}}\|x-P_{F}(x)\|, \mbox{ for all } x\in X, m\geqslant 0, \mbox{ and } \Lambda_m(x).\end{equation}
\end{defi}

\begin{rek}\rm Note that if we take $\cF$ to be the family of all initial segments, we always have $F\setminus \Lambda_m\left(x\right)<\Lambda_m\left(x\right)\setminus F$ in \eqref{e100}, so a basis is $\cF$-strong partially greedy if and only if it is strong partially greedy, i.e., the former notion generalizes the latter. 
\end{rek}

The first order of business is to characterize $\mathcal{F}$-strong partially greedy bases using the $\mathcal{F}$-strong disjoint conservative property.

\begin{defi}\label{Fconservative2}\normalfont A  basis $\mathcal{B}$ is $\mathcal{F}$-strong disjoint conservative if there is $C > 0$ such that $\|1_A\|\leqslant C\|1_B\|$ for any $A, B\in \mathbb{N}^{<\infty}$ such that $|A|\leqslant |B|$, and there is $F\in \cF$ such that $A\subset F$, $A < B$, and $F\cap B = \emptyset$. 

If we can replace $1_A$ and $1_B$ with $1_{\varepsilon,A}$ and $1_{\delta, B}$ for any signs $\varepsilon, \delta$, we say that $\BB$ is $\mathcal{F}$-strong disjoint superconservative. 
\end{defi}

\begin{thm}\label{charFPG}
Let $\mathcal{F}$ be a family and $\BB$ be a basis of a $p$-Banach space $X$. The following are equivalent
\begin{enumerate}[\rm i)]
\item \label{FSPG}$\BB$ is $\cF$-strong partially greedy. 
\item \label{QGFSCS}$\BB$ is quasi-greedy and  $\mathcal{F}$-strong disjoint superconservative. 
\item \label{QGFSCD} $\BB$ is quasi-greedy and  $\mathcal{F}$-strong disjoint conservative. 
\end{enumerate}
\end{thm}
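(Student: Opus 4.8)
The plan is to follow the template of Theorem \ref{bcal2}, adapting each implication to the ordered (conservative) setting. The implication \ref{FSPG} $\Rightarrow$ \ref{QGFSCS} is the easiest: quasi-greediness follows by taking $F=\emptyset$ in \eqref{e100} (exactly as in Lemma \ref{l0}). For $\cF$-strong disjoint superconservativity, fix $F\in\cF$, $A\subset F$ with $A<B$, $F\cap B=\emptyset$, and $|A|\leqslant|B|$, together with signs $\varepsilon,\eta$. Consider the vector $y:=1_{\varepsilon,A}+1_{F\setminus A}+1_{\eta,B}$. Choosing a greedy set $\Lambda_{m}(y)$ with $m=|F|+|B|$ that picks up exactly $F\cup B$ (all nonzero coefficients have modulus $1$, so after a perturbation we may assume this), we get $P_{\Lambda_m(y)}(y)=y$; but more to the point, take $m'=|B|$ and a greedy set $\Lambda_{m'}(y)=B$ (legitimate since all coefficients have equal modulus, again up to perturbation). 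Then $F\setminus\Lambda_{m'}(y)=F$, $\Lambda_{m'}(y)\setminus F=\emptyset$, which does not satisfy the ordering constraint; so instead I take $m'=|F|$ and $\Lambda_{m'}(y)=F$, giving $F\setminus\Lambda_{m'}(y)=\emptyset<\Lambda_{m'}(y)\setminus F=\emptyset$, vacuously true, and $\|y-P_{\Lambda_{m'}(y)}(y)\|=\|1_{\eta,B}\|$. On the other hand $\|y-P_A(y)\|\geqslant$ ... — here I must be careful: I want a \emph{lower} bound involving $\|1_{\varepsilon,A}\|$. The clean way, mirroring the proof of \ref{FAG}$\Rightarrow$\ref{QGFSDS}, is: $\|1_{\varepsilon,A}\|=\|y-(1_{F\setminus A}+1_{\eta,B})\|=\|y-P_F(y)+1_{\varepsilon,A}-1_{\varepsilon,A}\|$; rather, write $1_{\varepsilon,A}=y-P_{F\setminus A}(y)-P_B(y)$ and observe that with $\Lambda=F\setminus A$ appended appropriately... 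The robust choice is simply to apply \eqref{e100} with $x=y$, the greedy set $\Lambda_{|F|}(y)=F$ (so $x-P_{\Lambda_{|F|}(y)}(y)=1_{\eta,B}$), and test set $F':=F\setminus A\in\mathcal{P}\cF$ — but $\mathcal{P}\cF$ need not lie in $\cF$. To avoid this, I instead run the argument with $x=1_{\varepsilon,A}+1_{\eta,B}$ and a greedy set of size $|A|$ equal to... the coefficients on $A$ and $B$ all have modulus $1$, so a valid greedy set of cardinality $|A|$ can be taken to be $A$ itself, giving residual $1_{\eta,B}$ and forcing $A\setminus\Lambda=\emptyset<\Lambda\setminus A=\emptyset$ vacuously — wait, we need the test $F$. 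Apply \eqref{e100} with $m=|A|$, $\Lambda_m(x)=A$, and test set $F\in\cF$ (with $A\subset F$, $F\cap B=\emptyset$, $A<B$): then $F\setminus\Lambda_m(x)=F\setminus A$ and $\Lambda_m(x)\setminus F=\emptyset$, so the constraint $F\setminus A<\emptyset$ fails unless $F=A$. So that does not work either; the correct route, as in Theorem \ref{bcal2}'s proof, is $x=1_{\varepsilon,A}+1_{F\setminus A}+1_{\eta,B}$, greedy set $\Lambda_{|F|}(x)=F$, test set $\emptyset$: useless. \textbf{The genuine argument:} take $x=1_{\varepsilon,A}+1_{F\setminus A}+1_{\eta,B}$ and the greedy set $\Lambda_m(x)$ with $m=|B|+|F\setminus A|$ chosen to be $(F\setminus A)\cup B$ — all coefficients modulus $1$. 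Then $x-P_{\Lambda_m(x)}(x)=1_{\varepsilon,A}$. The test set is $F\in\cF$ with $|F|\leqslant m$ (true once $|B|\geqslant|A|$, since $|F|=|A|+|F\setminus A|\leqslant|B|+|F\setminus A|=m$); and $F\setminus\Lambda_m(x)=A$ while $\Lambda_m(x)\setminus F=B$, and indeed $A<B$, so the ordering constraint holds. Thus \eqref{e100} gives $\|1_{\varepsilon,A}\|\leqslant\Delta\|x-P_F(x)\|=\Delta\|1_{\eta,B}\|$, which is precisely $\cF$-strong disjoint superconservativity with constant $\Delta$.

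The implication \ref{QGFSCS} $\Rightarrow$ \ref{QGFSCD} is immediate (restrict to trivial signs). The substance is \ref{QGFSCD} $\Rightarrow$ \ref{FSPG}, and here I would transcribe the computation in the proof of Theorem \ref{bcal2} almost verbatim, with the only change being that the ordering hypothesis $F\setminus\Lambda_m(x)<\Lambda_m(x)\setminus F$ built into \eqref{e100} supplies exactly the order condition needed to invoke $\cF$-strong disjoint conservativity in place of $\cF$-strong disjoint democracy. Concretely: let $C$ be the constant of \eqref{tqg+} (available since quasi-greedy $\Rightarrow$ truncation quasi-greedy by Lemma \ref{lemmaTQG}), let $C_q$ be the suppression quasi-greedy constant, and $C_c$ the $\cF$-strong disjoint conservative constant. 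Fix $x$, $m\geqslant 0$, $\Lambda_m(x)$, and $F\in\cF$ with $|F|\leqslant m$ and $F\setminus\Lambda_m(x)<\Lambda_m(x)\setminus F$. Then decompose
\begin{align*}
\|x-P_{\Lambda_m(x)}(x)\|^p
&\ =\ \|x-P_{F\cap\Lambda_m(x)}(x)-P_{\Lambda_m(x)\setminus F}\bigl(x-P_{F\cap\Lambda_m(x)}(x)\bigr)\|^p\\
&\ \leqslant\ C_q^p\,\|x-P_{F\cap\Lambda_m(x)}(x)\|^p\\
&\ \leqslant\ C_q^p\,\|x-P_F(x)\|^p+C_q^p\,\|P_{F\setminus\Lambda_m(x)}(x)\|^p\\
&\ \leqslant\ C_q^p\,\|x-P_F(x)\|^p+C_q^pC^p\max_{n\in F\setminus\Lambda_m(x)}|e_n^*(x)|^p\,\|1_{F\setminus\Lambda_m(x)}\|^p\\
&\ \leqslant\ C_q^p\,\|x-P_F(x)\|^p+C_q^pC^pC_c^p\min_{n\in\Lambda_m(x)\setminus F}|e_n^*(x)|^p\,\|1_{\Lambda_m(x)\setminus F}\|^p\\
&\ \leqslant\ C_q^p\bigl(1+C_c^pC^{2p}\bigr)\,\|x-P_F(x)\|^p,
\end{align*}
where the first inequality uses suppression quasi-greediness applied to the greedy projection $P_{\Lambda_m(x)\setminus F}$ of the vector $x-P_{F\cap\Lambda_m(x)}(x)$ (note $\min_{n\in\Lambda_m(x)\setminus F}|e_n^*(x)|\geqslant\max_{n\notin\Lambda_m(x)}|e_n^*(x)|$ so this is a legitimate greedy set for that vector); the third and last use \eqref{tqg+} with the pairs $(a_n,e_n^*(x))$ on $F\setminus\Lambda_m(x)$ and on $\Lambda_m(x)\setminus F$ respectively; and the fourth — the only new point — uses that $F\setminus\Lambda_m(x)\subset F\in\cF$, $F\setminus\Lambda_m(x)<\Lambda_m(x)\setminus F$, $(\Lambda_m(x)\setminus F)\cap F=\emptyset$, and $|F\setminus\Lambda_m(x)|\leqslant|\Lambda_m(x)\setminus F|$ (because $|F|\leqslant m=|\Lambda_m(x)|$ forces $|F\setminus\Lambda_m(x)|\leqslant|\Lambda_m(x)\setminus F|$), which is exactly the configuration in Definition \ref{Fconservative2}. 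Taking the infimum over admissible $F$ and extracting $p$-th roots yields \eqref{e100} with $\Delta\leqslant C_q(1+C_c^pC^{2p})^{1/p}$.

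The main obstacle, and the place where care is needed, is the first implication: one must manufacture a single vector whose greedy set and residual are simultaneously forced (up to the standard perturbation of coefficients) to realize the disjoint pair $(A,B)$ \emph{with the correct ordering} $A<B$. Choosing the greedy set to absorb $(F\setminus A)\cup B$ and leaving $A$ as the residual does the job, since $A<B$ places $A$ "to the left" as required; I would spell out the perturbation step (all relevant coefficients have modulus $1$, so an arbitrarily small generic perturbation makes the greedy selection unique and equal to the prescribed set) to make the argument airtight. Everything else is a routine adaptation of Theorems \ref{bcal2} and \ref{charPG}.
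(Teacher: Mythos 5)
Your proposal is correct and, after the exploratory false starts in the first implication, arrives at exactly the paper's argument: the vector $x=1_{\varepsilon,A}+1_{F\setminus A}+1_{\eta,B}$ with greedy set $(F\setminus A)\cup B$ and test set $F$ for \ref{FSPG}$\Rightarrow$\ref{QGFSCS}, and the verbatim telescoping estimate from Theorem \ref{bcal2} (with the ordering hypothesis supplying the conservative configuration) for \ref{QGFSCD}$\Rightarrow$\ref{FSPG}. The only stylistic remark is that the final write-up should retain just the ``genuine argument'' and drop the dead ends.
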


\begin{proof}
We first show that \ref{FSPG} implies \ref{QGFSCS}. Suppose that $\BB$ is $\Delta$-$\cF$-strong partially greedy. By choosing $F = \emptyset$ in \eqref{e100}, we see that $\BB$ is quasi-greedy.  To see that it is $\cF$-strong disjoint superconservative, fix $F\in \cF$, $A\subset F$, and $B\in \NN^{<\infty}$ so that  $|A| \leqslant |B|$, $F\cap B = \emptyset$, and $A < B$. Fix signs $\varepsilon$, $\eta$. We have 
\begin{align*}
\|1_{\varepsilon, A}\|&\ =\ \|1_{\varepsilon, A}+1_{F\setminus A}+1_{\eta, B}-(1_{F\setminus A}+1_{\eta, B})\|\\
&\ \leqslant\  \Delta  \|1_{\varepsilon, A}+1_{F\setminus A}+1_{\eta, B}-P_F(1_{\varepsilon, A}+1_{F\setminus A}+1_{\eta, B})\|=\Delta\|1_{\eta,B}\|. 
\end{align*}

That \ref{QGFSDS} implies \ref{QGFSDD} is immediate. 

Finally, to see that \ref{QGFSDD} implies \ref{FAG}, we let $C$ be a constant for which \eqref{tqg+} holds, and suppose that $\mathcal{B}$ is $C_q$-suppression quasi-greedy and $D_{c}$-$\cF$-strong disjoint conservative. Fix $x\in X$, $m\in \NN$, $\Lambda_m(x)$, and $F\in \cF$ with $|F|\leqslant m$ and $F\backslash \Lambda_m(x) < \Lambda_m(x)\backslash F$. We have
\begin{align*}
\|x-P_{\Lambda_m(x)}(x)\|^p&\ =\ \|x-P_{F\cap \Lambda_m(x)}(x)- P_{\Lambda_m(x)\setminus F}(x-P_{F\cap \Lambda_m(x)}(x))  \|^p\\
&\ \leqslant\ C_q^p\|x-P_{F\cap \Lambda_m(x)}(x)\|^p\\
&\ \leqslant\ C_q^p\|x-P_F(x)\|^p+C_q^p\|P_{F\setminus \Lambda_m(x)}(x)\|^p\\
& \ \leqslant\ C_q^p\|x-P_F(x)\|^p+C_q^pC^p\max_{n\in F\setminus \Lambda_m(x)}|e_n^*(x)|^p\|1_{F\setminus \Lambda_m(x)}\|^p\\
& \leqslant\  C_q^p\|x-P_F(x)\|^p+C_q^pC^pD_c^p\min_{n\in  \Lambda_m(x)\setminus F}|e_n^*(x)|^p\|1_{ \Lambda_m(x)\setminus F }\|^p\\
& \leqslant\  C_q^p(1+D_c^pC^{2p})\|x-P_F(x)\|^p. 
\end{align*}
Hence, $\mathcal{B}$ is $\cF$-strong partially greedy with a constant at most $C_q(1+D_c^pC^{2p})^{1/p}$. 
\end{proof}

Next, we prove Theorem \ref{m3} through the following results. 

\begin{thm}
If $(a_n)_n$ is unbounded, there exists a basis that is $\mathcal{F}_{(a_n)}$-strong partially greedy but not strong partially greedy. 
\end{thm}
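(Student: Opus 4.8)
The plan is to recycle, rather than rebuild: the basis $\BB$ produced in the proof of Theorem~\ref{propunbounded} is already quasi-greedy, satisfies \eqref{democracy}, and is $\mathcal{F}_{(a_n)}$-almost greedy, and I claim it also witnesses the present statement. First I would record the elementary fact that, for any family $\mathcal{F}$ with $\emptyset\in\mathcal{F}$, $\mathcal{F}$-almost greediness implies $\mathcal{F}$-strong partial greediness with the same constant: the infimum on the right-hand side of \eqref{e100} ranges over the subfamily of those $F\in\mathcal{F}$ with $|F|\leqslant m$ that additionally satisfy $F\setminus\Lambda_m(x)<\Lambda_m(x)\setminus F$, hence it is no smaller than the infimum in \eqref{e4}; since $G_m(x)=P_{\Lambda_m(x)}(x)$, inequality \eqref{e4} yields \eqref{e100}. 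Applying this to $\BB$ shows $\BB$ is $\mathcal{F}_{(a_n)}$-strong partially greedy. (Alternatively one may cite Theorem~\ref{charFPG}: $\BB$ is quasi-greedy, and \eqref{democracy} gives $\|1_{\varepsilon,A}\|\approx|A|^{1/2}\leqslant|B|^{1/2}\leqslant\|1_{\delta,B}\|$ whenever $A\in\mathcal{P}\mathcal{F}_{(a_n)}$ and $|A|\leqslant|B|$, so $\BB$ is $\mathcal{F}_{(a_n)}$-strong disjoint superconservative.)

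Next I would show $\BB$ is not strong partially greedy, for which, by Theorem~\ref{charPG}, it suffices to show $\BB$ is not conservative. Here I reuse the sets from the ``not democratic'' part of the proof of Theorem~\ref{propunbounded}: for each large $m$ let $s$ and $B_m=D_s\sqcup B$ be as there, so that $|B_m|=m$, $D_s\subseteq B_m$, and consequently $\|1_{B_m}\|\geqslant\|1_{B_m}\|_{\circ}\geqslant\|1_{D_s}\|_{\circ}\geqslant\tfrac12|D_s|^{1/p}\gtrsim m^{1/p}$ (using $|D_s|\leqslant m<3|D_s|$), where $p\in(1,2)$ is the exponent fixed in that construction, so $1/p>1/2$; the auxiliary piece $B$ may be placed anywhere finite, say just above $D_s$. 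Now, since $j+\{b_1,\dots,b_m\}\in\mathcal{F}_{(a_n)}$ for every $j\geqslant 0$, I pick $E_m\in\mathcal{F}_{(a_n)}$ with $|E_m|=m$ and $\min E_m>\max B_m$; by \eqref{democracy}, $\|1_{E_m}\|\lesssim m^{1/2}$. Then $B_m<E_m$, $|B_m|=|E_m|=m$, and $\|1_{B_m}\|/\|1_{E_m}\|\gtrsim m^{1/p-1/2}\to\infty$, so $\BB$ is not conservative and hence not strong partially greedy. One may use either the conditional basis of Theorem~\ref{propunbounded}\eqref{conditional} or the unconditional one of Theorem~\ref{propunbounded}\eqref{unconditional}, since both satisfy \eqref{democracy}.

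Since the hard work of constructing $\BB$ is already done in Theorem~\ref{propunbounded}, there is no real obstacle here; the proof is essentially the observation that the non-democracy witness $B_m$ becomes a non-conservativity witness once its light counterpart is placed above it. The only points that require a little care are: (i) checking that the admissible set of $F$'s in Definition~\ref{definitionFpartiallygreedy2} is a subfamily of the one in the definition of $\mathcal{F}$-almost greediness, so that the latter property implies the former; and (ii) exploiting that $\mathcal{F}_{(a_n)}$ is closed under arbitrary nonnegative shifts to put $E_m$ entirely above $B_m$, while confirming that the estimates $\|1_{B_m}\|\gtrsim m^{1/p}$ and $\|1_{E_m}\|\lesssim m^{1/2}$ carried over from the proof of Theorem~\ref{propunbounded} are unaffected by this placement.
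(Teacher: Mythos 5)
Your proof is correct and follows essentially the same route as the paper: reuse the basis of Theorem~\ref{propunbounded}, observe that its $\mathcal{F}_{(a_n)}$-almost greediness trivially upgrades to $\mathcal{F}_{(a_n)}$-strong partial greediness, and then impose the extra constraint $E_m>B_m$ so that \eqref{e200} refutes conservativeness and hence, via Theorem~\ref{charPG}, strong partial greediness. The only difference is that you spell out the $\mathcal{F}$-almost greedy $\Rightarrow$ $\mathcal{F}$-strong partially greedy implication, which the paper leaves implicit.
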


\begin{proof}
Note that in the proof of Theorem \ref{propunbounded}, one can require the additional requirement on the set $E_m$ that $E_m>B_m$. Then \eqref{e200} gives that $\BB$ is not conservative and thus, not strong partially greedy according to Theorem \ref{charPG}. 
\end{proof}

\begin{thm}\label{thm: fspg->spg}
If $(a_n)_n$ is bounded, then a basis is $\mathcal{F}_{(a_n)}$-strong partially greedy if and only if it is strong partially greedy. 
\end{thm}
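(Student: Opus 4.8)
The plan is to mirror the strategy used for the almost greedy case (Theorems~\ref{theorembounded} and~\ref{m1}), replacing the role of (super)democracy with (super)conservativity. By Theorem~\ref{charFPG}, a basis is $\mathcal{F}_{(a_n)}$-strong partially greedy if and only if it is quasi-greedy and $\mathcal{F}_{(a_n)}$-strong disjoint conservative; and by Theorem~\ref{charPG}, it is strong partially greedy if and only if it is quasi-greedy and conservative. So the whole theorem reduces to the following two implications, under the hypothesis $M:=\max_n a_n<\infty$: (a) conservative $\Rightarrow$ $\mathcal{F}_{(a_n)}$-strong disjoint conservative; (b) $\mathcal{F}_{(a_n)}$-strong disjoint conservative (together with quasi-greediness) $\Rightarrow$ conservative. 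Direction (a) is essentially immediate: if $A\subset F\in\mathcal{F}_{(a_n)}$, $A<B$, $F\cap B=\emptyset$ and $|A|\le|B|$, then in particular $A,B\in\NN^{<\infty}$ with $|A|\le|B|$ and $A<B$, so $\|1_A\|\le C\|1_B\|$ directly from conservativity. Thus the content is all in (b).

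For direction (b), first I would upgrade $\mathcal{F}_{(a_n)}$-strong disjoint conservativity to $\mathcal{P}\mathcal{F}_{(a_n)}$-strong disjoint conservativity (i.e., allow $A$ to lie in a subset of a member of $\mathcal{F}_{(a_n)}$), reasoning exactly as in Lemma~\ref{lemmasubsets}: given $A\subset A_1\in\mathcal{F}_{(a_n)}$ with $A<B$, use the $0$-sliding property of $\mathcal{F}_{(a_n)}$ to find $F\in\mathcal{F}_{(a_n)}$ with $|F|\ge M'$ and $|F\cap\{1,\dots,M'\}|\le 0$ for $M'$ chosen large (larger than everything in $A_1\cup B$), extract $F_1\subset F$ with $|F_1|=|A|$ and $F_1>A_1\cup B$, and play the same telescoping/projection game to get $\|1_{\varepsilon,A}\|\lesssim\|1_{F_1}\|$ and then (via quasi-greediness, i.e. Lemma~\ref{lemmaTQG}) $\|1_{F_1}\|\lesssim\|1_{\delta,B}\|$ after splitting $F_1$ against $B\setminus F$; I must be careful to preserve the order condition $A<B$ throughout, which is fine because the newly introduced sets $F_1$ and $F$ are placed \emph{above} both $A$ and $B$. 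Then I would run the analog of the proof of Theorem~\ref{theorembounded}: $\mathcal{F}_{(a_n)}$ is $M$-covering, so given $A<B$ in $\NN^{<\infty}$ with $|A|\le|B|$, I can cover all but at most $M$ elements of $A$ by $M$ members $F_1,\dots,F_M\in\mathcal{F}_{(a_n)}$, split $A$ into $A_0\sqcup A_1\sqcup\cdots\sqcup A_M$ with $|A_0|\le M$ and $A_i\subset F_i$, and bound $\|1_A\|^p\le\sum_{i=0}^M\|1_{A_i}\|^p$; the $A_0$ term is at most $Mc_2^{2p}\le Mc_2^{2p}\|1_B\|^p\cdot c_2^p$-type trivial bound, and each $\|1_{A_i}\|^p$ is controlled by $\|1_B\|^p$ via $\mathcal{P}\mathcal{F}_{(a_n)}$-strong disjoint conservativity --- \emph{provided} $A_i<B$ and $F_i\cap B=\emptyset$, which hold since $A_i\subset A<B$ and, for the disjointness, one can discard $F_i\cap B$ from $F_i$ (still a member of $\mathcal{P}\mathcal{F}_{(a_n)}$ containing $A_i$, since $A_i\subset A<B$ is disjoint from $B$).

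The one subtlety, and what I expect to be the main obstacle, is that the covering sets $F_i$ produced by the $M$-covering property of $\mathcal{F}_{(a_n)}$ need not sit to the left of $B$, nor be disjoint from $B$, whereas the strong disjoint conservativity hypothesis requires both. The fix just sketched --- replace $F_i$ by $F_i\setminus B$, which lies in $\mathcal{P}\mathcal{F}_{(a_n)}$ and still contains $A_i$ because $A_i\subset A<B$ so $A_i\cap B=\emptyset$, and then $A_i<B$ is automatic --- resolves it, but one has to check that passing to $\mathcal{P}\mathcal{F}_{(a_n)}$ rather than $\mathcal{F}_{(a_n)}$ in the conservativity estimate is legitimate, which is exactly why the preliminary upgrade to $\mathcal{P}\mathcal{F}_{(a_n)}$-strong disjoint (super)conservativity is needed first. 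Assembling these pieces, $\mathcal{B}$ is conservative, hence by Theorem~\ref{charPG} strong partially greedy, completing the nontrivial direction; the converse direction (strong partially greedy $\Rightarrow$ $\mathcal{F}_{(a_n)}$-strong partially greedy) follows from (a) plus Theorems~\ref{charPG} and~\ref{charFPG}. I would also remark that, just as in Section~4, the boundedness of $(a_n)$ is used precisely to guarantee the $M$-covering property with a finite $M$.
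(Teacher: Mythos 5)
Your high-level route — reduce via Theorems~\ref{charPG} and~\ref{charFPG} to showing that $\mathcal{F}_{(a_n)}$-strong disjoint conservativity plus quasi-greediness implies conservativity, then attack the latter with the $M$-covering property — is sound and genuinely different from the paper's, which works directly with the $\mathcal{F}_{(a_n)}$-strong partially greedy inequality: the paper builds covering sets $S_i$ tailored to $A$ so that $\max\bigl(\cup_i S_i\bigr)\leqslant (M-1)+\max A$ (this is the role of the carefully chosen $k_0$), which lets it control how much of $B$ the $S_i$ eat, and it then replaces $B$ by a modified set $B_1$ disjoint from the $S_i$'s before applying the $\mathcal{F}_{(a_n)}$-SPG inequality. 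Your approach avoids the $B_1$ surgery and, if the intermediate step worked, would arguably be cleaner.

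However, there is a genuine gap in your Step~1, the upgrade from $\mathcal{F}_{(a_n)}$- to $\mathcal{P}\mathcal{F}_{(a_n)}$-strong disjoint conservativity by mimicking Lemma~\ref{lemmasubsets}. In that lemma the auxiliary block $F_1$ is placed strictly to the \emph{right} of $A_1\cup B$, and the second half of the telescoping argument bounds $\|1_{F_1}\|$ by $\|1_{\delta,B}\|$. This is fine for (super)democracy because it is symmetric, but it fails here: the $\mathcal{F}$-strong disjoint conservative property (and the order condition $F\setminus\Lambda_m(x)<\Lambda_m(x)\setminus F$ in Definition~\ref{definitionFpartiallygreedy2}) only control $\|1_E\|$ by $\|1_{E'}\|$ when $E<E'$, whereas you have $F_1>B$, i.e.\ the wrong order. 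Nor does Lemma~\ref{lemmaTQG} help: to get $\|1_{F_1}\|\lesssim\|1_{\delta,B}\|$ you would need $\max_{n\in F_1}1\leqslant\min_{n\in F_1}|e_n^*(1_{\delta,B})|$, but $F_1\cap B=\emptyset$ makes the right-hand side zero. So ``play the same telescoping/projection game'' does not go through; this is precisely the asymmetry that separates the conservative setting from the democratic one and that forces the paper into a different construction.

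Fortunately the upgrade you need is essentially free for $\mathcal{F}_{(a_n)}$, and your Step~2 survives once you substitute the right argument. If $A\in\mathcal{P}\mathcal{F}_{(a_n)}$, say $A\subset j+F_{0,L}$, then every element of $A$ has the form $j+\sum_{k\leqslant\ell}a_k$, so taking $\ell$ with $j+\sum_{k\leqslant\ell}a_k=\max A$ gives $F':=j+F_{0,\ell}\in\mathcal{F}_{(a_n)}$ with $A\subset F'$ and $\max F'=\max A$. Hence for any $B$ with $A<B$ one automatically has $F'\cap B=\emptyset$, and $\mathcal{F}_{(a_n)}$-strong disjoint conservativity applies directly — no sliding, no quasi-greediness, no splitting. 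Equivalently, in your Step~2 you can truncate each covering set $F_i$ in the same way to a set $F_i'\in\mathcal{F}_{(a_n)}$ with $A_i\subset F_i'$ and $\max F_i'=\max A_i<\min B$, bypassing $\mathcal{P}\mathcal{F}_{(a_n)}$ entirely. With that replacement, and the trivial bound on the leftover $A_0$, your covering argument yields conservativity and the theorem follows as you describe.
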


\begin{proof}
If a basis is strong partially greedy, then by Theorem \ref{charPG}, it is quasi-greedy and conservative. By Definition \ref{Fconservative2}, a conservative basis is $\mathcal{F}_{(a_n)}$-strong disjoint conservative. Finally, we use Theorem \ref{charFPG} to conclude that the basis is $\mathcal{F}_{(a_n)}$-strong partially greedy.

Next, assume that $\mathcal{B}$ is $\Delta$-$\mathcal{F}_{(a_n)}$-strong partially greedy.
By Theorem \ref{charPG}, we need to show that $\mathcal{B}$ is quasi-greedy and conservative. The former is obvious by replacing $F$ in \eqref{e100} with the empty set to have the $\Delta$-suppression quasi-greedy property. We prove the latter. Pick nonempty $A, B\in \mathbb{N}^{<\infty}$ with $|A|\leqslant |B|$ and $A < B$. Let $M = \max_n a_n$. Write $A = A_1\sqcup A_2$, where $A_1 = A_{< M}$ and $A_2 = A_{\geqslant M}$. First, assume that $|A| \geqslant M$ and thus, $A_2\neq \emptyset$. Choose $k_0\in \mathbb{N}$ such that 
$$(\min A_2 - a_1)+\sum_{i=1}^{k_0}a_i\ \leqslant\ \max A_2,\quad (\min A_2 - a_1)+\sum_{i=1}^{k_0+1}a_i\ >\ \max A_2.$$
Let $F = \{a_1, \ldots, a_1+\cdots+a_{k_0}\}\in \mathcal{F}_{(a_n)}$. For $1\leqslant i\leqslant M$, let
$$S_i \ =\ (i-1)+(\min A_2 - a_1)+F\ \in\ \mathcal{F}_{(a_n)}.$$

\begin{claim}\label{clcon}
The interval $[\min A_2, \max A_2-M]$ is contained in $\cup_{i=1}^M S_i$.
\end{claim}

\begin{proof}
Suppose, for a contradiction, that there exists $j$ such that $\min A_2\leqslant j\leqslant \max A_2-M$ and $j\notin \cup_{i=1}^M S_i$. Note that $j\geqslant \min A_2 + M$ because the interval $[\min A_2, \min A_2+M-1]\subset \cup_{i=1}^M S_i$. 

Clearly, $\ell_i := j-(i-1)-(\min A_2-a_1)\notin F$ for all $1\leqslant i\leqslant M$. Furthermore,
\begin{align*}
\ell_i&\ \geqslant\ (\min A_2 + M) - (M-1)-(\min A_2-a_1)\ =\ a_1 + 1,\\
\ell_i&\ \leqslant\ (\max A_2 - M)-(\min A_2-a_1)\ =\ a_1 + \max A_2 - M- \min A_2. 
\end{align*}
Hence, the $M$ consecutive numbers $\ell_k$ satisfy
$$a_1+1\ \leqslant\ \ell_k\ \leqslant\ a_1+\max A_2 - M- \min A_2\ \leqslant\ \sum_{i=1}^{k_0+1}a_i-1.$$
However, $\ell_k\notin F$ for all $k$. This contradicts that $\max a_n = M$. 
\end{proof}

We have 
\begin{equation}\label{e102}|\{n\, :\, n> A, n\in \cup_{i=1}^{M}S_i\}|\ <\ M,\end{equation}
because 
$$\max \cup_{i=1}^M S_i \ =\ \max S_M \ =\  (M-1)+(\min A_2 - a_1) + \sum_{i=1}^{k_0}a_i\ \leqslant\ (M-1)+\max A.$$
Hence, $|B\cap \left(\cup_{i=1}^{M}S_i\right)| < M$ and $|B\backslash \left(\cup_{i=1}^{M}S_i\right)|\geqslant 1$. Form $B_1 = (B\backslash \left(\cup_{i=1}^{M}S_i\right))\sqcup D$, where $D > B\cup \left(\cup_{i=1}^{M}S_i\right)$ and $|D| = M-1$. Then $|B_1|\geqslant |B|$. We define $A_{2,j}$ recursively as follows: let $A_{2,1} = A_2\cap S_1$. Then for $2\leqslant j\leqslant M$, 
$$A_{2,j} \ =\ (A_2\cap S_j)\backslash \cup_{i=1}^{j-1}A_{2,i}.$$
By the $\mathcal{F}_{(a_n)}$-strong partially greedy property, we have, for all $1\leqslant i\leqslant M$, 
\begin{align}
    \|1_{A_{2,i}}\| &\ =\ \|1_{A_{2,i}}+1_{S_i\backslash A_{2,i}}+1_{B_1}-(1_{S_i\backslash A_{2,i}}+1_{B_1})\|\nonumber\\
    &\label{e104}\ \leqslant\ \Delta\|1_{A_{2,i}}+1_{S_i\backslash A_{2,i}}+1_{B_1}-P_{S_i}(1_{A_{2,i}}+1_{S_i\backslash A_{2,i}}+1_{B_1})\|\ =\ \Delta\|1_{B_1}\|, 
\end{align}
where we can project onto $S_i$ because
\begin{itemize}
    \item $S_i\in \mathcal{F}_{(a_n)}$,
    \item $|S_i|\le |B| + |S_i|-|A_{2,i}| = |B| + |S_i\backslash A_{2, i}|  \le |B_1| +|S_i\backslash A_{2,i}|$, and
    \item $S_i\backslash ((S_i\backslash A_{2,i})\cup B_1) = A_{2, i} < B_1 =  ((S_i\backslash A_{2,i})\cup B_1)\backslash S_i$.
\end{itemize}

On the other hand, since $\mathcal{B}$ is $\Delta$-suppression quasi-greedy, 
\begin{align}
\|1_{B_1}\|^p &\ \leqslant\ \|1_{B\backslash \left(\cup_{i=1}^{M}S_i\right)}\|^p + \|1_{D}\|^p\nonumber\\
&\label{e105}\ \leqslant\ \Delta^p\|1_{B}\|^p + (M-1)c_2^{2p}\|1_B\|^p\ =\ (\Delta^p+(M-1)c_2^{2p})\|1_B\|^p.
\end{align}
From \eqref{e104} and \eqref{e105},
$$\|1_{A_{2,i}}\|\ \leqslant\ \Delta(\Delta^p+(M-1)c_2^{2p})^{1/p}\|1_B\|, \mbox{ for all }1\leqslant i\leqslant M.$$
By Claim \ref{clcon}, we can write $A = \sqcup_{i=1}^M A_{2,i}\sqcup A_1\sqcup A_0$, where $|A_0|\leqslant M$. Therefore,
\begin{align*}
\|1_A\|^p&\ \leqslant\ \sum_{i=1}^M \|1_{A_{2,i}}\|^p + \|1_{A_0\sqcup A_1}\|^p\\
&\ \leqslant\ M\Delta^p(\Delta^p+(M-1)c_2^{2p})\|1_B\|^p + 2Mc_2^{2p}\|1_B\|^p\\
&\ =\ M(\Delta^p(\Delta^p+(M-1)c_2^{2p})+2c_2^{2p})\|1_B\|^p. 
\end{align*}
It remains to consider the case $|A|\le M$. In that case, $\|1_A\|\le M^{1/p}c_2^{2}\|1_{B}\|$. We conclude that $\mathcal{B}$ is conservative, which completes our proof. 
\end{proof}

\section{Between the almost greedy and strong partially greedy properties}\label{mpg}

The following definition is inspired by an interval characterization of strong partially greedy bases in \cite[Theorem 1.2]{Ch2}. Thanks to Lemma \ref{memptyset}, we can and will assume that $\emptyset\in \mathcal{F}$.

\begin{defi}\normalfont\label{definitionFPG1}
A basis is said to be $\mathcal{F}$-minimum partially greedy if there exists $\Delta\geqslant 1$ such that
\begin{equation}\label{newdefi}\|x-P_{\Lambda_m(x)}(x)\| \ \leqslant\ \Delta\inf_{\substack{F\in \mathcal{F}, |F|\leqslant m\\  F = \emptyset \vee \min F\leqslant \Lambda_m(x)}}\|x-P_{F}(x)\|, \mbox{ for all } x\in X, m\in \mathbb{N}, \mbox{ and } \Lambda_m(x).\end{equation}
\end{defi}

\begin{rek}\rm Note that if we take $\cF$ to be the family of all initial segments, we always have $\min \left(F\right)\le \Lambda_m\left(x\right)$ in \eqref{newdefi}, so a basis is $\cF$-strong partially greedy if and only if it is strong partially greedy. This shows that the notion of $\cF$-minimum-partially greedy bases is (another) extension of the notion of strong partially greedy bases (in addition to $\cF$-strong partially greedy bases). 
\end{rek}

\begin{prop}\label{p300}
Given a family $\mathcal{F}$, if a basis is $\cF$-minimum partially greedy, then it is $\cF$-strong partially greedy.
\end{prop}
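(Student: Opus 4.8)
The plan is to show that for any fixed $x \in X$, $m \in \NN$, and greedy selection $\Lambda_m(x)$, the infimum appearing in the $\cF$-strong partially greedy inequality \eqref{e100} is taken over a \emph{subset} of the sets $F$ that appear in the $\cF$-minimum partially greedy inequality \eqref{newdefi}; once this is established, the same constant $\Delta$ works and the implication follows immediately. So the whole task reduces to a set-theoretic comparison of the two index conditions.

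\begin{proof}
Suppose $\BB$ is $\Delta$-$\cF$-minimum partially greedy. Fix $x\in X$, $m\in \NN$, and $\Lambda_m(x)$. Let $F\in \cF$ with $|F|\leqslant m$ satisfy the condition appearing in \eqref{e100}, namely $F\setminus \Lambda_m(x) < \Lambda_m(x)\setminus F$. We claim that $F$ also satisfies the condition in \eqref{newdefi}, i.e., $F=\emptyset$ or $\min F\leqslant \Lambda_m(x)$ (meaning $\{\min F\}<\Lambda_m(x)$ fails, equivalently $\min F$ is not strictly larger than every element of $\Lambda_m(x)$; more precisely, using the convention of the paper, $\min F\leqslant \max \Lambda_m(x)$, and in the edge case $\Lambda_m(x)=\emptyset$ we must have $m=0$, handled separately). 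Indeed, assume $F\neq\emptyset$ and, towards a contradiction, that $\min F>\Lambda_m(x)$, i.e. $\min F$ exceeds every element of $\Lambda_m(x)$. Then $F\cap \Lambda_m(x)=\emptyset$, so $F\setminus \Lambda_m(x)=F$; moreover $\Lambda_m(x)\setminus F=\Lambda_m(x)$. Pick $n\in F$ with $n=\min F$. Since $\min F>\Lambda_m(x)$, every element of $\Lambda_m(x)$ is strictly smaller than $n$, hence strictly smaller than every element of $F\setminus \Lambda_m(x)=F$, which contradicts $F\setminus \Lambda_m(x) < \Lambda_m(x)\setminus F$ unless $\Lambda_m(x)=\emptyset$. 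If $\Lambda_m(x)=\emptyset$ then $m=0$, and \eqref{newdefi} is vacuous (both sides involve $m\in\NN$ only); in any case, $F$ with $|F|\leqslant 0$ forces $F=\emptyset$. Thus every $F$ admissible in \eqref{e100} is admissible in \eqref{newdefi}, so
$$
\|x-P_{\Lambda_m(x)}(x)\|\ \leqslant\ \Delta\inf_{\substack{F\in \mathcal{F}, |F|\leqslant m\\  F = \emptyset \vee \min F\leqslant \Lambda_m(x)}}\|x-P_{F}(x)\|\ \leqslant\ \Delta\inf_{\substack{F\in \mathcal{F}, |F|\leqslant m\\    F\setminus \Lambda_m(x) < \Lambda_m(x)\setminus F}}\|x-P_{F}(x)\|,
$$
which is precisely \eqref{e100} with the same constant $\Delta$. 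Hence $\BB$ is $\cF$-strong partially greedy.
\end{proof}

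The main obstacle here, and the only point requiring genuine care, is pinning down the exact meaning of the shorthand ``$\min F\leqslant \Lambda_m(x)$'' and ``$F\setminus\Lambda_m(x)<\Lambda_m(x)\setminus F$'' in the degenerate cases ($F=\emptyset$, $\Lambda_m(x)=\emptyset$, or $F\subseteq\Lambda_m(x)$ so that $\Lambda_m(x)\setminus F$ or $F\setminus\Lambda_m(x)$ is empty), since set-inequality conventions make vacuous statements true; one should check that the paper's convention — that $A<B$ holds vacuously when either set is empty — makes the containment of admissible index sets go through cleanly, and I would state this explicitly rather than leave it implicit.
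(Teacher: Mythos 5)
Your proof does not go through. The set containment you rely on is false, and the failure comes precisely from the degenerate case you flag at the end but do not resolve.

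The paper's convention on order relations between sets is that $A\leqslant B$ means $a\leqslant b$ for \emph{all} $a\in A$ and \emph{all} $b\in B$; thus $\min F\leqslant \Lambda_m(x)$ means $\min F\leqslant \min\Lambda_m(x)$, not $\min F\leqslant \max\Lambda_m(x)$ as you assert. (You can check this against the paper's own proof, where $\min F\leqslant \Lambda_m(x)$ is deduced from \emph{both} $\min F\leqslant \Lambda_m(x)\setminus F$ and $\min F\leqslant \Lambda_m(x)\cap F$; your reading would make one of these redundant.) With the correct interpretation, the containment ``admissible in \eqref{e100} $\implies$ admissible in \eqref{newdefi}'' fails: take $F=\{5\}\subset \Lambda_m(x)=\{3,5,7\}$. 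Then $F\setminus\Lambda_m(x)=\emptyset<\Lambda_m(x)\setminus F$ holds vacuously, so $F$ is admissible in \eqref{e100}, yet $\min F=5>3\in\Lambda_m(x)$, so $F$ is not admissible in \eqref{newdefi}. Your proof-by-contradiction sets up the wrong negation: you assume $\min F>\Lambda_m(x)$ (i.e., $\min F>\max\Lambda_m(x)$), but the actual negation of $\min F\leqslant\Lambda_m(x)$ is merely that $\min F>\min\Lambda_m(x)$.

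The paper handles the problematic case $F\subset\Lambda_m(x)$ (where $F\setminus\Lambda_m(x)=\emptyset$ and the condition in \eqref{e100} is vacuous) by a separate argument: in that case $\Lambda_m(x)\setminus F$ is a greedy set of $x-P_F(x)$, and the $\Delta$-suppression quasi-greedy property (obtained from \eqref{newdefi} by taking $F=\emptyset$) gives $\|x-P_{\Lambda_m(x)}(x)\|=\|(x-P_F(x))-P_{\Lambda_m(x)\setminus F}(x-P_F(x))\|\leqslant\Delta\|x-P_F(x)\|$. The case $F\not\subset\Lambda_m(x)$, where your containment idea is sound, is then handled as you intend, noting that $\min F\leqslant F\setminus\Lambda_m(x)<\Lambda_m(x)\setminus F$ and $\min F\leqslant\Lambda_m(x)\cap F$ together yield $\min F\leqslant\Lambda_m(x)$. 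You need to add the $F\subset\Lambda_m(x)$ case with the quasi-greedy argument to complete the proof.
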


\begin{proof}
Assume that a basis satisfies \eqref{newdefi}. We shall show that the basis satisfies \eqref{e100}. Pick $x\in X, m\in \mathbb{N}$, $\Lambda_m(x)$, and $F\in \mathcal{F}$ such that $|F|\leqslant m$ and $F\backslash \Lambda_m(x) < \Lambda_m(x)\backslash F$. If $F = \emptyset$, then $\|x-P_{\Lambda_m(x)}\|\ \leqslant\ \Delta\|x-P_F(x)\|$ by \eqref{newdefi}. Suppose that $F\neq \emptyset$. 

Case 1: $F\backslash \Lambda_m(x) \neq \emptyset$; equivalently, $F\not\subset \Lambda_m(x)$.
Write $\Lambda_m(x) = (\Lambda_m(x)\backslash F)\cup (\Lambda_m(x)\cap F)$. Clearly, 
$$\min F\ \leqslant\ F\backslash \Lambda_m(x)\ <\ \Lambda_m(x)\backslash F\mbox{ and }\min F\ \leqslant\ \Lambda_m(x)\cap F.$$
Hence, $\min F\leqslant \Lambda_m(x)$. By \eqref{newdefi}, we have
$$\|x-P_{\Lambda_m(x)}(x)\|\ \leqslant\ \Delta\|x-P_F(x)\|.$$

Case 2: $F\subset \Lambda_m(x)$. Then $\Lambda_m(x)\backslash F$ is a greedy set of $x-P_F(x)$. Observe that \eqref{newdefi} implies that the basis is $\Delta$-suppression quasi-greedy. Therefore,

$$\|x-P_{\Lambda_m(x)}(x)\|\ =\ \|(x-P_F(x))-P_{\Lambda_m(x)\backslash F}(x-P_F(x))\|\ \leqslant\ \Delta\|x-P_F(x)\|.$$

This completes our proof. 
\end{proof}
As we shall see, the reverse implication of Proposition \ref{p300} does not hold in general.  In fact, depending on the sequence $(a_n)$, the $\mathcal{F}_{(a_n)}$-minimum partially greedy property may be equivalent to strong partial greediness, to almost greediness, or to properties that lie strictly between the two.  We begin our study of this property with a result analogous to Theorems~\ref{bcal2} and~\ref{charFPG}. 

\begin{defi}\label{Fconservative3}\normalfont A  basis $\mathcal{B}$ is $\mathcal{F}$-minimum disjoint conservative if there is $C > 0$ such that $\|1_A\|\leqslant C\|1_B\|$ for any $A, B\in \mathbb{N}^{<\infty}$ with $|A|\leqslant |B|$, and there is $F\in \cF$ such that $A\subset F$,  $B\cap F=\emptyset$, and $\min F< B$. 

If we can replace $1_A$ and $1_B$ with $1_{\varepsilon,A}$ and $1_{\delta, B}$ for any signs $\varepsilon, \delta$, we say that $\BB$ is $\mathcal{F}$-minimum disjoint superconservative. 
\end{defi}

\begin{thm}\label{bcal4}
Let $\mathcal{F}$ be a family and $\mathcal{B}$ be a basis of a $p$-Banach space $X$. The following are equivalent
\begin{enumerate}[\rm i)]
\item \label{FMPG}$\BB$ is $\cF$-minimum partially greedy. 
\item \label{QGFMDSC}$\BB$ is quasi-greedy and  $\mathcal{F}$-minimum disjoint superconservative. 
\item \label{QGFMDC} $\BB$ is quasi-greedy and  $\mathcal{F}$-minimum disjoint conservative. 
\end{enumerate}
\end{thm}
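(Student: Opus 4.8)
The plan is to prove the cycle \ref{FMPG} $\Rightarrow$ \ref{QGFMDSC} $\Rightarrow$ \ref{QGFMDC} $\Rightarrow$ \ref{FMPG}, following the proofs of Theorems~\ref{bcal2} and~\ref{charFPG} almost line for line. The only genuinely new point is bookkeeping: wherever those arguments exploited an initial-segment structure or an order relation $A<B$, here I must instead use the weaker ``minimum'' hypotheses—$\min F\leqslant\Lambda_m(x)$ in \eqref{newdefi} and $\min F<B$ in Definition~\ref{Fconservative3}—and check that they are exactly what is needed.

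For \ref{FMPG} $\Rightarrow$ \ref{QGFMDSC}, taking $F=\emptyset$ in \eqref{newdefi} already gives that $\BB$ is $\Delta$-suppression quasi-greedy. For the superconservative inequality, I would fix $F\in\cF$, $A\subseteq F$, $B\in\NN^{<\infty}$ with $|A|\leqslant|B|$, $B\cap F=\emptyset$, $\min F<B$, and signs $\varepsilon,\eta$ (the cases $F=\emptyset$ or $A=\emptyset$ being trivial), and test \eqref{newdefi} on $x:=1_{\varepsilon,A}+1_{F\setminus A}+1_{\eta,B}$, all of whose nonzero coefficients have modulus $1$, with the greedy set $\Lambda:=B\cup(F\setminus A)$ of order $m:=|B|+|F\setminus A|$ (note $m\geqslant|F|$ is just $|A|\leqslant|B|$). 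The one thing to verify is $\min F\leqslant\Lambda$: for $n\in B$ this is the hypothesis $\min F<B$, and for $n\in F\setminus A\subseteq F$ it is automatic. Since $x-P_\Lambda(x)=1_{\varepsilon,A}$ and $x-P_F(x)=1_{\eta,B}$, \eqref{newdefi} yields $\|1_{\varepsilon,A}\|\leqslant\Delta\|1_{\eta,B}\|$. The implication \ref{QGFMDSC} $\Rightarrow$ \ref{QGFMDC} is immediate.

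For \ref{QGFMDC} $\Rightarrow$ \ref{FMPG}, assume $\BB$ is $C_q$-suppression quasi-greedy and $D_c$-$\cF$-minimum disjoint conservative, and let $C$ realize \eqref{tqg+} (available via Lemma~\ref{lemmaTQG}, as quasi-greedy implies truncation quasi-greedy). Fixing $x\in X$, $\Lambda_m(x)$, and $F\in\cF$ with $|F|\leqslant m$ and either $F=\emptyset$ (handled by quasi-greediness) or $\min F\leqslant\Lambda_m(x)$, I would reproduce for $F\neq\emptyset$ the estimate from the proof of Theorem~\ref{charFPG}:
\begin{align*}
\|x-P_{\Lambda_m(x)}(x)\|^p
&\leqslant C_q^p\,\|x-P_{F\cap\Lambda_m(x)}(x)\|^p\\
&\leqslant C_q^p\|x-P_F(x)\|^p+C_q^pC^p\max_{n\in F\setminus\Lambda_m(x)}|e_n^*(x)|^p\,\|1_{F\setminus\Lambda_m(x)}\|^p\\
&\leqslant C_q^p\|x-P_F(x)\|^p+C_q^pC^pD_c^p\min_{n\in\Lambda_m(x)\setminus F}|e_n^*(x)|^p\,\|1_{\Lambda_m(x)\setminus F}\|^p\\
&\leqslant C_q^p(1+C^{2p}D_c^p)\,\|x-P_F(x)\|^p,
\end{align*}
where the first inequality applies suppression quasi-greediness to the greedy set $\Lambda_m(x)\setminus F$ of $x-P_{F\cap\Lambda_m(x)}(x)$; the second combines the $p$-triangle inequality with \eqref{tqg+} (bounding $\|P_{F\setminus\Lambda_m(x)}(x)\|$); the third uses the greedy inequality $\max_{n\in F\setminus\Lambda_m(x)}|e_n^*(x)|\leqslant\min_{n\in\Lambda_m(x)\setminus F}|e_n^*(x)|$ together with $\|1_{F\setminus\Lambda_m(x)}\|\leqslant D_c\|1_{\Lambda_m(x)\setminus F}\|$; and the last is a second application of \eqref{tqg+}. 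The step needing care is the conservative bound, for which I claim $G:=F\in\cF$ is an admissible witness: indeed $F\setminus\Lambda_m(x)\subseteq F$, $(\Lambda_m(x)\setminus F)\cap F=\emptyset$, $|F\setminus\Lambda_m(x)|\leqslant|\Lambda_m(x)\setminus F|$ because $|F|\leqslant m$, and $\min F<\Lambda_m(x)\setminus F$ since every $n\in\Lambda_m(x)\setminus F$ satisfies $n\geqslant\min\Lambda_m(x)\geqslant\min F$ and $n\notin F$, hence $n>\min F$. Taking the infimum over admissible $F$ then gives \eqref{newdefi} with $\Delta=C_q(1+C^{2p}D_c^p)^{1/p}$.

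I expect no real obstacle here: the norm estimates are exactly those already established for Theorems~\ref{bcal2} and~\ref{charFPG}, and the content of the argument is the two small order-theoretic checks flagged above—putting $\min F$ below the entire greedy set in the forward direction, and certifying $G=F$ in the backward direction even though $F\setminus\Lambda_m(x)$ and $\Lambda_m(x)\setminus F$ need not be ordered. I would also run a quick check on the degenerate subcases (one of $F\setminus\Lambda_m(x)$ or $\Lambda_m(x)\setminus F$ empty, $F\subseteq\Lambda_m(x)$, or $\Lambda_m(x)=\emptyset$), each of which is immediate.
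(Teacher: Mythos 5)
Your argument matches the paper's proof essentially line for line: the same test vector $1_{\varepsilon,A}+1_{F\setminus A}+1_{\eta,B}$ for \ref{FMPG}$\Rightarrow$\ref{QGFMDSC}, and the same four-step chain using suppression quasi-greediness, \eqref{tqg+}, and the minimum disjoint conservative bound with witness $G=F$ for \ref{QGFMDC}$\Rightarrow$\ref{FMPG}. The two order-theoretic checks you flag ($\min F\leqslant B\cup(F\setminus A)$ and $\min F<\Lambda_m(x)\setminus F$) are exactly the points the paper relies on implicitly, and your handling of the degenerate case $\Lambda_m(x)\setminus F=\emptyset$ agrees with the paper's ``If $\Lambda_m(x)=F$, there is nothing to prove.''
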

\begin{proof}
Let us show that \ref{FMPG} implies \ref{QGFMDSC}. Suppose that $\BB$ is $\Delta$-$\cF$-minimum partially greedy. By definition, it is quasi-greedy.  To see that it is also $\Delta$-$\cF$-minimum disjoint superconservative, fix $F\in \cF$, $A\subset F$, and $B\in \NN^{<\infty}$ so that  $F\cap B=\emptyset$, $\min F < B$, and $|A|\leqslant |B|$. Fix signs $\varepsilon$, $\eta$. We have
\begin{align*}
\|1_{\varepsilon, A}\|&\ =\ \|1_{\varepsilon, A}+1_{F\setminus A}+1_{\eta, B}-(1_{F\setminus A}+1_{\eta, B})\|\\
&\ \leqslant\  \Delta  \|1_{\varepsilon, A}+1_{F\setminus A}+1_{\eta, B}-P_F(1_{\varepsilon, A}+1_{F\setminus A}+1_{\eta, B})\|=\Delta\|1_{\eta,B}\|. 
\end{align*}

That \ref{QGFMDSC} implies \ref{QGFMDC} is immediate. 

Finally, to see that \ref{QGFMDC} implies \ref{FMPG}, we let $C$ be a constant for which \eqref{tqg+} holds, and suppose that $\mathcal{B}$ is $C_q$-suppression quasi-greedy and $C_1$-$\cF$-minimum disjoint conservative. Fix $x\in X$, $m\in \NN$, $\Lambda_m(x)$, and $F\in \cF$ with $|F|\leqslant m$ and $\min(F)\leqslant \Lambda_m(x)$. If $\Lambda_m(x)=F$, there is nothing to prove. Otherwise, 
\begin{align*}
\|x-P_{\Lambda_m(x)}(x)\|^p&\ =\ \|x-P_{F\cap \Lambda_m(x)}(x)- P_{\Lambda_m(x)\setminus F}(x-P_{F\cap \Lambda_m(x)}(x))  \|^p\\
&\ \leqslant\ C_q^p\|x-P_{F\cap \Lambda_m(x)}(x)\|^p\\
&\ \leqslant\ C_q^p\|x-P_F(x)\|^p+C_q^p\|P_{F\setminus \Lambda_m(x)}(x)\|^p\\
& \ \leqslant\ C_q^p\|x-P_F(x)\|^p+C_q^pC^p\max_{n\in F\setminus \Lambda_m(x)}|e_n^*(x)|^p\|1_{F\setminus \Lambda_m(x)}\|^p\\
& \leqslant\  C_q^p\|x-P_F(x)\|^p+C_q^pC^pC_1^p\min_{n\in  \Lambda_m(x)\setminus F}|e_n^*(x)|^p\|1_{ \Lambda_m(x)\setminus F }\|^p\\
& \leqslant\  C_q^p(1+C_1^pC^{2p})\|x-P_F(x)\|^p. 
\end{align*}
Taking infimum, we conclude that $\mathcal{B}$ is $\cF$-minimum partially greedy with the constant at most $C_q(1+C_1^pC^{2p})^{1/p}$. 
\end{proof}

We turn now to families of sets of natural numbers generated by sequences. To simplify the notation, given $(a_n)_{n\in \NN}$, $j\in \NN_0$, $\ell\in \NN$, we set
\begin{equation}\label{e401}
F_{j,\ell, (a_n)}\ :=\ j+\left\{ a_1, a_1+a_2,\dots ,\sum_{k=1}^{\ell}a_k\right\}. 
\end{equation}
If there is no ambiguity, we may just write $F_{j,\ell}$. Also, we set $r_m = \max F_{0,m}$ and $I_{m}:=\{1,\dots,m\}$ for each $m\in \NN$. Let us state a lemma that follows at once from Theorem~\ref{thm: fspg->spg} and Proposition \ref{p300}. 
\begin{lem}\label{lemmampg->spg}If $(a_n)$ is bounded and $\BB$ is $\cF_{(a_n)}$-minimum partially greedy, it is strong partially greedy. 
\end{lem}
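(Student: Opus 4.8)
The plan is to chain together the two results cited right before the lemma, with $\cF=\cF_{(a_n)}$ throughout. First I would invoke Proposition~\ref{p300}: since $\BB$ is $\cF_{(a_n)}$-minimum partially greedy, it is $\cF_{(a_n)}$-strong partially greedy. Then, using that $(a_n)_n$ is bounded, I would apply the forward implication of Theorem~\ref{thm: fspg->spg} to conclude that $\BB$ is strong partially greedy. That is the entire argument.

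There is essentially no obstacle here — the content was already done in establishing Proposition~\ref{p300} and Theorem~\ref{thm: fspg->spg}, and the only thing to check is that the family $\cF_{(a_n)}$ is the common family in both statements, which it is (with $\emptyset\in\cF_{(a_n)}$, as assumed throughout this section). So the proof is just the observation that the two implications compose. If one wanted to be slightly more self-contained, one could instead note that by Theorem~\ref{bcal4} the minimum partially greedy hypothesis gives quasi-greediness plus $\cF_{(a_n)}$-minimum disjoint conservativeness, the latter trivially implies $\cF_{(a_n)}$-strong disjoint conservativeness (fewer constraints on $A,B$), and then Theorem~\ref{charFPG} together with Theorem~\ref{thm: fspg->spg} finishes; but the shortest route is the direct composition above.

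\begin{proof}
Assume $(a_n)_n$ is bounded and $\BB$ is $\cF_{(a_n)}$-minimum partially greedy. By Proposition~\ref{p300}, applied with $\cF=\cF_{(a_n)}$, the basis $\BB$ is $\cF_{(a_n)}$-strong partially greedy. Since $(a_n)_n$ is bounded, Theorem~\ref{thm: fspg->spg} yields that $\BB$ is strong partially greedy.
\end{proof}
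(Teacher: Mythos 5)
Your proof is correct and matches the paper exactly: the authors state that the lemma "follows at once from Theorem~\ref{thm: fspg->spg} and Proposition~\ref{p300}," which is precisely the two-step chain you give (Proposition~\ref{p300} to pass from $\cF_{(a_n)}$-minimum partially greedy to $\cF_{(a_n)}$-strong partially greedy, then the bounded case of Theorem~\ref{thm: fspg->spg} to conclude strong partial greediness).
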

The converse of Lemma~\ref{lemmampg->spg} does not hold in general. More precisely, we have the following result.

%
%
%
%
%

\begin{lem}\label{lemmastrong}Let $(a_n)_{n\in \NN}\subset \NN$ be a sequence. The following are equivalent 
\begin{enumerate}
\item \label{stronganstrong} Every strong partially greedy basis is $\cF_{(a_n)}$-minimum partially greedy.
\item \label{finitegaps} The set $\{n\in \NN: a_n\geqslant 2\}$ is finite. 
\end{enumerate}
Therefore, even when $(a_n)_n$ is bounded, a strong partially greedy basis may not be $\cF_{(a_n)}$-minimum partially greedy.
\end{lem}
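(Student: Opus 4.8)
The plan is to prove the two implications \ref{finitegaps} $\Rightarrow$ \ref{stronganstrong} and $\neg$\ref{finitegaps} $\Rightarrow$ $\neg$\ref{stronganstrong}. For the easy direction, suppose $\{n : a_n \geqslant 2\}$ is finite, say contained in $\{1,\dots,n_0\}$, so that $a_n = 1$ for all $n > n_0$. Let $\BB$ be strong partially greedy; by Theorem~\ref{charPG} it is quasi-greedy and conservative. By Theorem~\ref{bcal4} it suffices to show $\BB$ is $\cF_{(a_n)}$-minimum disjoint conservative. So fix $A, B$ with $|A|\leqslant|B|$ and $F\in\cF_{(a_n)}$ with $A\subset F$, $B\cap F=\emptyset$, and $\min F < B$. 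Write $F = F_{j,\ell,(a_n)}$. The key observation is that beyond index $n_0$ the set $F$ consists of consecutive integers, so $F$ differs from an initial-segment-like set only in a bounded ``head'' of size at most $n_0$; more precisely $A$ can be written as $A_{\mathrm{head}} \sqcup A'$ with $|A_{\mathrm{head}}|\leqslant n_0$ and $A'$ an interval lying to the right of the head and, crucially, with $A' < B$ failing to hold in general — here I must be careful, since $\min F < B$ only controls the left endpoint. The correct route is: split $A' = A'_{<B}\sqcup A'_{\geqslant B}$ where $A'_{<B} < B$ and $A'_{\geqslant B}$ lies inside the interval spanned by $B$; the former is bounded against $\|1_B\|$ by conservativeness (after possibly enlarging $B$ to the right by at most $|F\setminus A|$ extra coordinates, which costs only a quasi-greedy factor), and $A'_{\geqslant B}$ has cardinality at most $|B|$ and is ``interleaved'' with $B$, so it is bounded using conservativeness applied to $A'_{\geqslant B}$ versus a set to its right plus truncation-quasi-greediness; combining the finitely many pieces via the $p$-triangle inequality gives the bound. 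The main obstacle here is bookkeeping: unlike the $\cF_0$ case, $\min F < B$ does not force $A < B$, so one genuinely must dissect $A$ relative to the interval $[\min B, \max B]$ and absorb the ``interior'' part using quasi-greediness and \eqref{tqg+}; I expect this interleaving estimate to be the delicate step.

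For the converse, assume $\{n : a_n \geqslant 2\}$ is infinite; I want a strong partially greedy basis that is not $\cF_{(a_n)}$-minimum partially greedy. The idea is to exploit the infinitely many gaps to build, for each such gap, a set $F\in\cF_{(a_n)}$ with small left endpoint whose support has a large gap right after a single small initial element, so that $F = \{1_{\text{st element}}\}\sqcup(\text{far-away interval})$; projecting onto such an $F$ should be cheap (because the far-away interval of a well-chosen vector contributes little) while the greedy projection $P_{\Lambda_m(x)}$ is forced to be large. Concretely I would take a variant of the space in Lemma~\ref{lemmakt} or the construction in Theorem~\ref{propunbounded}: start from a quasi-greedy, democratic (hence conservative, hence strong partially greedy by Theorem~\ref{charPG}) basis such as the canonical basis of the Konyagin--Temlyakov space, and then perturb the norm by adding a functional-type seminorm supported on a sequence of blocks $(A_k)$ chosen so that each $A_k$ is ``reachable'' as the tail of some $F_{j,\ell,(a_n)}$ with $\min F_{j,\ell} = j + a_1$ small but $A_k$ itself far to the right. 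Because $a_{n}\geqslant 2$ infinitely often, one can arrange that $j + \{a_1\}$ and $A_k$ are both subsets of a single $F\in\cF_{(a_n)}$ while no initial segment of comparable length sees $A_k$; the extra seminorm makes $\|1_{A_k}\|$ grow like $|A_k|^{1/2}$ times a divergent factor relative to an incoming initial-segment error, breaking \eqref{newdefi}. I would then verify (i) the perturbed basis is still quasi-greedy and conservative — routine, since the perturbation is an $\ell_1$-type weighted seminorm with summable weights as in the proof of Theorem~\ref{propunbounded} — and (ii) the explicit failure of \eqref{newdefi}, by feeding in $x = 1_{E}\,+\,t\,1_{\Lambda}$ for suitable $E$, $\Lambda$ with $E$ an initial segment and $\Lambda$ a greedy set, then computing $\|x - P_\Lambda(x)\|\approx\|1_E\|$ against $\inf_F \|x-P_F(x)\|$ which the admissible far-right $F$ makes much smaller.

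The last sentence of the lemma — that boundedness of $(a_n)$ does not suffice — is then immediate: take any bounded sequence with $a_n \geqslant 2$ for infinitely many $n$ (e.g.\ $a_n = 2$ for all $n$, the arithmetic-progression case), apply the converse direction just proved, and obtain a strong partially greedy basis that is not $\cF_{(a_n)}$-minimum partially greedy. I expect the genuinely hard part to be the converse construction: one must simultaneously keep the basis conservative (a ``forward'' democracy-type condition) while sabotaging the minimum-partially-greedy inequality, whose admissible comparison sets $F$ have \emph{small} minimum but are otherwise unconstrained in how far right their mass sits — so the blocks $(A_k)$ must be placed with enough room that they are captured by some $F_{j,\ell}$ with $j+a_1$ small, and checking this placement is compatible with the infinite-gap hypothesis is the crux. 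Conveniently, much of the machinery — the explicit gap-growth recursion \eqref{biggaps}, the separated blocks \eqref{separatedsets1}--\eqref{separatedsets2}, and Claim~\ref{claim3} — can be reused essentially verbatim from the proof of Theorem~\ref{propunbounded}.
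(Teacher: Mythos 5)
Your plan for the direction (2) $\Rightarrow$ (1) correctly identifies the obstacle — $\min F < B$ does not force $A < B$ — but the decomposition you propose (slicing $A$ into a head, a part $< B$, and an ``interleaved'' part, then invoking truncation quasi-greediness) is more complicated than needed and is not carried through. The clean observation, which the paper uses, is that one should dissect $B$, not $A$: if $I$ is the smallest interval containing $F$, then $|I\setminus F|$ is bounded (by a constant depending on $n_0$ and $\max a_n$), and since $B\cap F=\emptyset$ and $B>\min F$, the set $B$ can meet $I$ only inside $I\setminus F$. Hence when $|B|$ is large enough relative to $|I\setminus F|$, the portion $B_1 := \{n\in B : n > I\}$ has $|B_1|\geqslant 5|B|/6$, and then $A\subset F\subset I$ gives $A < B_1$ outright. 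One then splits $A$ into two pieces of size at most $5|A|/6$, applies conservativeness to each against $B_1$, and absorbs the change from $B$ to $B_1$ with a single suppression-quasi-greedy factor. Your ``interleaving'' step, as written, is a gap: the claimed bound on $A'_{\geqslant B}$ via ``conservativeness versus a set to its right plus truncation-quasi-greediness'' is not an argument, and the claim that enlarging $B$ costs only a quasi-greedy factor is backwards (suppression quasi-greediness controls removing coordinates, not adding them).

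The direction $\neg$(2) $\Rightarrow$ $\neg$(1) has a more serious gap. You propose to reuse the construction of Theorem~\ref{propunbounded} ``essentially verbatim,'' including the recursion \eqref{biggaps} and the block separation \eqref{separatedsets1}--\eqref{separatedsets2}. But \eqref{biggaps} requires $a_{n_k+1} > 2k^6 + b_{n_{k-1}+1}$, which forces $(a_n)$ to be unbounded. In Lemma~\ref{lemmastrong} the sequence may well be bounded (e.g.\ $a_n\equiv 2$), so that machinery is unavailable. The paper's actual construction is genuinely different: it fixes a strictly decreasing sequence of exponents $p_j\downarrow$ and defines tail seminorms $\|x\|_{\triangleleft,j} = \|(x_n)_{n>a_1+n_j}\|_{\ell_{p_j}}$, then chooses $n_j$ so that $|F_{1,n_j}\setminus(a_1+I_{n_j})|\geqslant k_j$ with $k_j^{1/p_j} > jk_j^{1/p_{j-1}}$. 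The crucial point is that $|F_{1,n}\setminus(a_1+I_n)|$ can be made arbitrarily large as soon as infinitely many $a_n\geqslant 2$ — no unboundedness is needed, because each $a_n\geqslant 2$ contributes at least one missing integer. The resulting basis is $1$-unconditional and $1$-conservative (so strong partially greedy), while $\cF_{(a_n)}$-minimum partial greediness would force $m_j^{1/p_j}\leqslant C m_j^{1/p_{j-1}}$, a contradiction since $p_j$ was chosen to make $m_j^{1/p_j-1/p_{j-1}}\to\infty$. This exponent-drop mechanism is the heart of the counterexample and is absent from your proposal.

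Finally, the last sentence of the lemma you do handle correctly: any bounded $(a_n)$ with $a_n\geqslant 2$ infinitely often (such as $a_n\equiv 2$) witnesses the failure.
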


\begin{proof}
We show that \eqref{stronganstrong} implies \eqref{finitegaps} by supposing that \eqref{finitegaps} does not hold and constructing a strong partially greedy basis that is not $\cF_{(a_n)}$-minimum partially greedy. To this end, first let $(p_k)_{k\in \NN}$ be a strictly decreasing sequence of positive real numbers greater than  $1$.  Now choose a sequence of sets $F_{1,n_j}$ (defined in \eqref{e401}) as follows. Fix $k_1>1$. Since \eqref{finitegaps} does not hold, there is $n_1>1$ such that  $\left| F_{1,n_1}\setminus (a_1+ I_{n_1})\right| \geqslant k_1$. 
Now choose $k_2 > k_1$ and $n_2 > \max F_{1,n_1}$, for which the following hold
$$
|F_{1, n_2}\setminus (a_1+I_{n_2})|\ \geqslant\ k_2\mbox{ and } k_2^{1/p_2} \ >\ 2 k_2^{1/p_1}.
$$
Recursively, choose strictly increasing sequences of positive integers $(n_j)_{j\in \NN}$, $(k_j)_{j\in \NN}$ so that for every $j\geqslant 2$, 
\begin{align*}
&|F_{1,n_j}\setminus (a_1+ I_{n_j})|\ \geqslant\ k_j;\\
&k_j^{1/p_j}\ >\ j k_j^{1/p_{j-1}}; \\
&n_j\ >\ \max F_{1,n_{j-1}}. 
\end{align*}
For each $j\in \NN$, define a seminorm on $c_{00}$ by
$$
\|(x_n)_n\|_{\triangleleft, j}\ := \left\| \left(x_n\right)_{n>a_1+n_j}\right\|_{\ell_{p_j}},
$$
and define $X$ as the completion of $c_{00}$ with the norm 
$$
\|x\|\ =\ \max\left\{ \|x\|_{\ell_{\infty}},\sup_{j\in \NN}\|x\|_{\triangleleft, j}\right\}. 
$$
Let $\BB$ be the canonical unit vector basis of $X$. It is routine to check that $\BB$ is $1$-unconditional and  $1$-conservative,  so by Theorem~\ref{charPG}, it is strong partially greedy. Suppose that $\BB$ is $C$-$\cF_{(a_n)}$-minimum partially greedy for some $C>0$. For each $j\in \NN$, let $m_j:=|F_{1,n_j}\setminus (a_1+ I_{n_j})|$ (note that $m_j=|(a_1+ I_{n_j})\setminus F_{1,n_j}|$). Given that $(p_j)_{j\in \NN}$ is decreasing and, for every $j$, we have $\min\left(F_{1,n_j}\right)=1+a_1\le a_1+ I_{n_j}$, it follows that 
\begin{align*}
m_j^{1/p_j}&\ =\ \|1_{F_{1,n_j}\setminus (a_1 + I_{n_j})}\|_{\triangleleft,j} \ \leqslant\ \|1_{F_{1,n_j}\setminus (a_1+ I_{n_j})}\|\\
&\ =\ \|1_{F_{1,n_j}\setminus (a_1+ I_{n_j})}+1_{a_1+ I_{n_j}}-1_{a_1+ I_{n_j}}\|\\
&\ \leqslant\ C\|1_{F_{1,n_j}\setminus (a_1+ I_{n_j})}+1_{a_1+ I_{n_j}}-1_{F_{1,n_j}}\|\\
&\ =\ C\|1_{(a_1+ I_{n_j})\setminus F_{1,n_j}}\|\ =\ C\sup_{l\in \NN}  \left\vert  \left(\left(a_1+I_{n_j}\right)\setminus F_{1,n_j}\right)\cap \NN_{>a_1+n_l} \right\vert^{\frac{1}{p_l}}\\
&\ =\ C\max_{1\le l\le j-1}  \left\vert  \left(\left(a_1+I_{n_j}\right)\setminus F_{1,n_j}\right)\cap \NN_{>a_1+n_l} \right\vert^{\frac{1}{p_l}}\\
&\ \le\  C\max_{1\le l\le j-1}  \left\vert  \left(a_1+I_{n_j}\right)\setminus F_{1,n_j} \right\vert^{\frac{1}{p_l}}\\
&\ =\ Cm_j^{1/p_{j-1}}.
\end{align*}
As $k_j\leqslant m_j$ for all $j\in \NN$, we have
$$
j\ <\ k_j^{1/p_j-1/p_{j-1}}\ \leqslant\ m_j^{1/p_j-1/p_{j-1}}\ \leqslant\ C, \mbox{ for all } j\in \NN_{\geqslant 2}, 
$$
a contradiction. 

Next,  we show that \eqref{finitegaps} implies \eqref{stronganstrong}. Suppose that $\BB$ is strong partially greedy. By Theorem~\ref{charPG}, it is $C_q$-quasi-greedy and $\Delta$-conservative for some positive constants $C_q$ and $\Delta$. By Theorem~\ref{bcal4}, it suffices to prove that $\BB$ is $\cF_{(a_n)}$-minimum disjoint conservative. To this end, let
$$
M_1\ =\ \left| \{n\in \NN\, :\, a_n\geqslant 2\}\right|\mbox{ and } M_2 \ :=\ \max_{n\in \NN}a_n,
$$
and pick $A, B\subset \NN$ and $j, \ell$ so that 
$$
|A|\ \leqslant\ |B|, A \ \subset\ F\ :=\ F_{j,\ell}, \mbox{ and }\min F\ <\ B.
$$
If $|A|\leqslant 6M_1M_2$, then 
$$
\|1_{A}\|\ \leqslant\ (6M_1M_2)^{1/p} c_2^2\|1_{B}\|. 
$$
Otherwise, let $I$ be the interval of minimum length containing $F$. It follows from the hypothesis that $|I\backslash F|\leqslant M_1M_2$. From $\min F <B$, $|B|\geqslant 6M_1M_2$, and $B\cap F=\emptyset$, we deduce that
$$
| B_1\ :=\ \left\{ n\in B\, :\, n> I\right\} |\ =\ |B| - |B\cap I| \ \geqslant\ |B| - |I\backslash F|\ \geqslant\ \frac{5|B|}{6}. 
$$
Now let $\{A_j\}_{1\leqslant j\leqslant 2}$ be a partition of $A$ so that $|A_j|\leqslant 5|A|/6$ for each $j = 1, 2$. As $A_j<B_1$ for $j = 1, 2$, we have
$$
\|1_{A_j}\|\ \leqslant\ \Delta\|1_{B_1}\|\ \leqslant\ \Delta C_q\|1_{B}\|, \mbox{ }j = 1, 2,
$$
which entails that 
$$
\|1_{A}\|\ \leqslant\  2^{1/p}\Delta C_q\|1_{B}\|. 
$$
We conclude that $\BB$ is $\cF_{(a_n)}$-minimum disjoint conservative with the constant at most $\max\{(6M_1M_2)^{1/p}c_2^2,2^{1/p}\Delta C_q\}$.
\end{proof}

Lemma~\ref{lemmastrong} shows that even under mild conditions on the sequence $(a_n)_{n\in\NN}$, the $\cF_{(a_n)}$-minimum partially greedy property is strictly stronger than the strong partially greedy property. This raises several questions: how strong can the $\cF_{(a_n)}$-minimum partially greedy property be? Can it be equivalent to almost greediness? Can it lie strictly between the aforementioned properties? Our next results give sufficient conditions on when the $\cF_{(a_n)}$-minimum partially greedy property is equivalent to the almost greedy property and when it lies between the almost greedy and strong partially greedy ones. We begin with the former case. 
\begin{lem}\label{lemmampgag}Let $(a_n)_{n\in \NN}$ be a bounded sequence such that 
$$\liminf\limits_{m\rightarrow \infty}\frac{|\{1\leqslant n\leqslant m: a_n\geqslant 2\}|}{m}\ >\ 0.$$
Then an $\cF_{(a_n)}$-minimum partially greedy basis is almost greedy. In particular, the conclusion holds if 
there are only finitely many $a_n$'s that are equal to $1$. 
\end{lem}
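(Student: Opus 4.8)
The plan is to show that $\mathcal{B}$ is $\cF_{(a_n)}$-almost greedy, whence Theorem~\ref{theorembounded} (applicable since $(a_n)_{n\in\NN}$ is bounded) gives that it is almost greedy. By Theorem~\ref{bcal2} it suffices to check that $\mathcal{B}$ is quasi-greedy and $\cF_{(a_n)}$-strong disjoint superdemocratic, and quasi-greediness is built into the definition of the $\cF_{(a_n)}$-minimum partially greedy property (put $F=\emptyset$). Moreover, Theorem~\ref{bcal4} already gives that $\mathcal{B}$ is $\cF_{(a_n)}$-minimum disjoint superconservative, while Lemma~\ref{lemmampg->spg} together with Theorem~\ref{charPG} shows that $\mathcal{B}$ is conservative (hence truncation quasi-greedy, so that \eqref{tqg+} is available). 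Thus the whole problem is to \emph{delete the side condition ``$\min F<B$''} from the minimum disjoint superconservative estimate: one must produce a constant $C$, depending only on $\mathcal{B}$ and $(a_n)$, with $\|1_{\varepsilon,A}\|\le C\|1_{\eta,B}\|$ whenever $A\subseteq F\in\cF_{(a_n)}$, $B\cap F=\emptyset$ and $|A|\le|B|$, for all signs $\varepsilon,\eta$.

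This is where the two hypotheses on $(a_n)$ enter. Boundedness, say $M:=\sup_n a_n$, forces every member of $\cF_{(a_n)}$ to have consecutive gaps at most $M$, hence to be ``dense'': an interval of length $\ge 2M$ contains a member of $\cF_{(a_n)}$ of proportional cardinality, and (recalling the proof of Theorem~\ref{theorembounded}) $\cF_{(a_n)}$ is $M$-covering. The positive lower density of $\{n:a_n\ge 2\}$ says in addition that the ``full'' sets $F_{0,\ell,(a_n)}$ of \eqref{e401} leave a complement $\{1,\dots,r_\ell\}\setminus F_{0,\ell,(a_n)}$ whose cardinality is at least a fixed fraction of $r_\ell$; together with the identity $\bigcup_{k=0}^{M-1}\{k+b_i:i\in\NN\}=[a_1,\infty)$, where $b_i=a_1+\dots+a_i$, this lets one extract from any finite $B$ a subset lying inside a single shifted full set whose minimum is at most $2M-1$. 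The argument then splits into two cases. If $|\{b\in B:b>\min F\}|\ge|A|$, the minimum disjoint superconservative estimate applies directly with $B$ replaced by $\{b\in B:b>\min F\}$, and a quasi-greedy projection bound returns to $B$. Otherwise $B$ is ``mostly low'', and I would use conservativity to replace $A$ by a far-to-the-right block sitting inside a full set with minimum $a_1$, break that block into boundedly many pieces each no larger than the dense $\cP\cF_{(a_n)}$-subset of $B$ supplied above, and route each piece through the now-applicable minimum disjoint superconservative inequality, choosing for each piece a shifted full set that covers it, has minimum at most $2M-1$ (so it sits below $\min B$ once the $\le 2M$ smallest elements of $B$ are set aside), and is disjoint from the chosen subset of $B$ — disjointness being possible precisely because the ``holes'' of the full sets have positive density.

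The main obstacle is to carry out this routing with a constant independent of $A$ and $B$: each restriction to a $\cP\cF_{(a_n)}$-subset loses a cardinality factor comparable to $M$, and the covering sets must simultaneously be dense enough to absorb the far-right pieces of $A$, low enough to respect the ``$\min F<B$'' constraint, and disjoint from the comparison part of $B$. Showing that boundedly many such steps suffice — so that the final piece count, and hence $C$, depend only on $M$ and the density constant — together with the cardinality bookkeeping through the splittings, is the technical heart of the proof. Once $\cF_{(a_n)}$-strong disjoint superdemocracy is established, Theorem~\ref{bcal2} gives that $\mathcal{B}$ is $\cF_{(a_n)}$-almost greedy and Theorem~\ref{theorembounded} gives that it is almost greedy. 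The ``in particular'' clause needs nothing new: if only finitely many $a_n$ equal $1$ then $|\{1\le n\le m:a_n\ge 2\}|/m\to 1$, so the $\liminf$ hypothesis is satisfied.
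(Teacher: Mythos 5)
Your reduction route is valid but differs from the paper's. You aim to show $\BB$ is $\cF_{(a_n)}$-strong disjoint superdemocratic, then invoke Theorem~\ref{bcal2} to get $\cF_{(a_n)}$-almost greediness, then Theorem~\ref{theorembounded} to get almost greediness. The paper instead proves full democracy directly and invokes Theorem~\ref{dkkt}; this is cleaner, since it never has to wrestle with the constraint $A\subset F$, $B\cap F=\emptyset$ at all, only with finding a chain of intermediate sets. Your Case 1 ($|\{b\in B:b>\min F\}|\geqslant|A|$) is correct: restrict $B$ to that subset, apply minimum disjoint superconservativity, then project back to $B$ via quasi-greediness. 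A minor slip: you write that conservativity gives truncation quasi-greediness; the correct source is quasi-greediness via Lemma~\ref{lemmaTQG} (the conclusion you need is still available, just not for the reason stated).

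The genuine gap is Case 2, which you yourself flag as ``the technical heart.'' Your sketch — move $A$ far right inside a full set, split into boundedly many pieces, route each through minimum disjoint superconservativity against shifted full sets of small minimum disjoint from a dense subset of $B$ — is not pinned down: it is unclear how you guarantee that each routing step keeps the cardinality comparison $|A_j|\leqslant|B_j'|$, that the cover sets are simultaneously disjoint from the chosen part of $B$ and of small minimum, and that the total number of pieces is bounded independently of $A,B$. In addition, the dichotomy ``$|\{b\in B:b>\min F\}|\geqslant|A|$'' vs.\ ``$<|A|$'' does not isolate the problematic situation: in the second case $B$ need not be mostly low (e.g.\ $|A|=|B|$ and exactly one element of $B$ lies below $\min F$), so the plan of exploiting a large low part of $B$ does not apply as stated. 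The paper's construction supplies exactly what is missing. With $m=|B|$ and $m_2$ a constant depending only on the density ratio and $\max a_n$, it sets $m_3=\lfloor m/(2m_2)\rfloor$, $m_4=\max F_{0,m_3}\leqslant m/2$, and uses the density hypothesis to extract $B_1\subset I_{m_4}\setminus F_{0,m_3}$ with $|B_1|=\lfloor m/(8m_2^2)\rfloor$ and $a_1<B_1$. It also takes $B_0=B\setminus I_{m_4}$, which has $|B_0|\geqslant m/2$ because $m_4\leqslant m/2$. Then it picks $m_6$ large and $B_2\subset F_{0,m_6}$ with $B_2>A\cup I_{m_4}$, $|B_2|=|B_1|$, partitions $A$ into $16m_2^2$ pieces each of size at most $|B_2|$, and chains: $\|1_{A_j}\|\leqslant\Delta_1\|1_{B_2}\|$ (conservativity, $A_j<B_2$); $\|1_{B_2}\|\leqslant\Delta_2\|1_{B_1}\|$ (minimum disjoint conservativity, $B_2\subset F_{0,m_6}$, $B_1\cap F_{0,m_6}=\emptyset$, $\min F_{0,m_6}=a_1<B_1$); $\|1_{B_1}\|\leqslant\Delta_1\|1_{B_0}\|$ (conservativity, $B_1<B_0$ and $|B_1|\leqslant|B_0|$); $\|1_{B_0}\|\leqslant C_q\|1_B\|$ (quasi-greediness). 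This four-step chain, together with the cardinality bookkeeping ensuring each comparison is legal, is precisely the missing routing; without an equally explicit chain your proposal does not close.
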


\begin{proof}
The hypothesis implies the existence of $m_1\in \NN$ and $\alpha>0$ such that 
\begin{equation}
\left| \left\{ 1\leqslant n\leqslant m\, :\, a_n\geqslant 2\right\}\right| \ \geqslant\ \alpha\left| \left\{ 1\leqslant n\leqslant m\,:\, a_n= 1\right\}\right|, \mbox{ for all }m\geqslant m_1.\label{finiteones+}
\end{equation}
By Theorem~\ref{dkkt} and Definition~\ref{definitionFPG1}, it suffices to prove that $\BB$ is democratic. Set
$$
m_2\ :=\ \left\lceil\max\{\alpha^{-1}, m_1, \max_{n\in \NN}a_n\}\right\rceil. 
$$
Note that $m_2\geqslant 2$, and \eqref{finiteones+} holds when replacing $m_1$ and $\alpha$ with $m_2$ and $m_2^{-1}$, respectively. By Theorem~\ref{charPG},  Theorem~\ref{bcal4}, and Lemma~\ref{lemmampg->spg}, there are $C_q, \Delta_1, \Delta_2>0$ such that $\BB$ is $C_q$-quasi greedy, $\Delta_1$-conservative, and $\Delta_2$-$\mathcal{F}$-minimum disjoint conservative. Fix $A, B$ nonempty subsets of $\NN$ such that $|A|\leqslant |B| =: m$. We consider the following cases.

Case 1: If $|A|\leqslant16m_2^3$, then 
$$
\|1_{A}\|\ \leqslant\  \left(16m_2^3\right)^{1/p}c_2^2\|1_{B}\|. 
$$

Case 2:  If $|A|> 16 m_2^3$, then $m>16m_2^3$. Let 
$$
m_3\ :=\ \floor{\frac{m}{2m_2}}\mbox{ and } m_4\ :=\ \max F_{0,m_3} \ \leqslant\ m_2m_3.
$$
As $m_3>m_1$, it follows from  \eqref{finiteones+} that 
\begin{align*}
\left| \left\{ 1\leqslant n\leqslant m_3\, :\, a_n \geqslant 2\right\}\right|&\ \geqslant\ \frac{1}{m_2}\left| \left\{ 1\leqslant n\leqslant m_3\,:\, a_n= 1\right\}\right|\\
&\ =\ \frac{m_3}{m_2}-\frac{1}{m_2}\left| \left\{ 1\leqslant n\leqslant m_3\,:\, a_n \geqslant 2\right\}\right|,
\end{align*}
which implies
$$\left| \left\{ 1\leqslant n\leqslant m_3\, :\, a_n \geqslant 2\right\}\right|\ \geqslant\ \frac{m_3}{m_2+1}.$$
$$$$
Hence, 
\begin{align}
&\left| I_{m_4}\setminus F_{0,m_3}\right| \ \geqslant\ \left| \left\{ 1\leqslant n\leqslant m_3\, :\, a_n \ \geqslant\ 2\right\}\right|\ \geqslant\ \frac{m_3}{m_2+1}\ \geqslant\ \frac{m}{4m_2^2}\ >\ 4m_2\ \geqslant\ 4a_1.\label{freeinterval}
\end{align}
Let $m_5:=\floor{\frac{m}{8m_2^2}}$. By \eqref{freeinterval}, there is $B_1$ such that 
$B_1\subset I_{m_4}\setminus F_{0,m_3}$, $|B_1| = m_5$, and $a_1<B_1$. 
Since $m_4\leqslant m_2m_3\leqslant m/2$, 	we have $|B_0:=B\setminus I_{m_4}|\geqslant m/2$. As $\BB$ is $C_q$-quasi-greedy, 
\begin{equation}
\|1_{B_0}\|\ \leqslant\ C_q\|1_{B}\|.\label{QGbound}
\end{equation}
Choose $m_6>m$ sufficiently large so that there is $B_2\subset F_{0,m_6}$ with $B_2>A\cup I_{m_4}$ and $|B_2|=m_5$. As $B_1\cap F_{0,m_6}=\emptyset$, $B_2\subset F_{0,m_6}$, and $\min F_{0,m_6}=a_1<B_1$, the minimum disjoint conservative property gives
\begin{equation}\label{minimumdisjcons}
\|1_{B_2}\|\ \leqslant\ \Delta_2\|1_{B_1}\|.  
\end{equation}
Since $|A|\leqslant m$, by our choice of $m_5$, there is a partition $(A_j)_{1\leqslant j\leqslant 16m_2^2}$ of $A$ with $|A_j|\leqslant m_5$  for all $1\leqslant j\leqslant 16m_2^2$ (some $A_j$'s might be empty). For each $1\leqslant j\leqslant 16m_2^2$, the conservative property gives
$$
\|1_{A_j}\|\ \leqslant\ \Delta_1\|1_{B_2}\|,
$$
so by $p$-convexity,
\begin{equation}
\|1_{A}\|\ \leqslant\ \Delta_1(16m_2^2)^{1/p}\|1_{B_2}\|. \label{AB1}
\end{equation}
Finally, since $|B_1|=m_5<m/2\leqslant |B_0|$ and $B_1<B_0$, we have $\|1_{B_1}\|\leqslant \Delta_1\|1_{B_0}\|$, which, in combination with \eqref{QGbound}, \eqref{minimumdisjcons}, and \eqref{AB1}, yields 
\begin{equation*}
\|1_{A}\|\ \leqslant\ C_q\Delta_1^2\Delta_2(16m_2^2)^{1/p}\|1_{B}\|.
\end{equation*}
Comparing the above estimates, it follows that $\BB$ is democratic with the constant no greater than $(16m_2^2)^{1/p}\max\{m_2^{1/p}c_2^2, C_q\Delta_1^2\Delta_2\}$. 
\end{proof}

To close this section, we find sufficient conditions on $(a_n)_{n\in \NN}$ so that the $\cF_{(a_n)}$-minimum partially greedy property lies strictly between the strong partially greedy and the almost greedy properties. To do so, we shall combine examples from \cite{BDKOW2019} and  \cite{KT1999} with a construction involving sequences in our context. 
\begin{lem}\label{lemmabetween}Let $(a_n)_{n\in\NN}$ be a bounded sequence. Suppose that there is $\alpha>0$ such that for all $m\in \mathbb{N}$, 
\begin{equation}
\left| \left \{ 1\leqslant n\leqslant m\, :\, a_n\geqslant 2\right\} \right| \ \leqslant\ \alpha m^{1/4}.\label{between2}
\end{equation}
Then there is a conditional Schauder basis that is $\cF_{(a_n)}$-minimum partially greedy but is not democratic. 
\end{lem}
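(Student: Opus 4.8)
The plan is to use Theorem~\ref{bcal4}: it suffices to construct a conditional Schauder basis $\BB$ of some Banach space that is quasi-greedy and $\cF_{(a_n)}$-minimum disjoint conservative but not democratic. The candidate is the canonical unit vector basis of the completion $X$ of $c_{00}$ under
$$
\|x\|\ :=\ \max\bigl\{\|x\|_{\ell_2},\ \|x\|_{\triangleleft},\ \|x\|_{\circ}\bigr\},\qquad \|x\|_{\triangleleft}\ :=\ \sup_{n\in\NN}\Bigl|\sum_{k=1}^{n}\frac{x_k}{k^{1/2}}\Bigr|,
$$
where $\|\cdot\|_{\circ}$ is a weighted $\ell_1$-type seminorm of the kind used in \cite{KT1999,BDKOW2019} and in Theorem~\ref{propunbounded}: its weights are supported on a sparse disjoint family of finite blocks $A_k$ with $|A_k|=k$, the $j$-th weight inside $A_k$ being $\bigl(\tfrac{k(k-1)}{2}+j\bigr)^{-1/q}$, and $1<p<2$ is fixed with $p\ge 4/3$ (so that $\tfrac1p-\tfrac1q\le\tfrac12$ and $1-\tfrac2q\le\tfrac12$, $\tfrac1p+\tfrac1q=1$). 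The norm $\|\cdot\|_{\ell_2}$ guarantees $\|1_B\|\ge|B|^{1/2}$ for every finite $B$, $\|\cdot\|_{\triangleleft}$ forces conditionality, and $\|\cdot\|_{\circ}$ must destroy democracy while staying compatible with $\cF_{(a_n)}$-minimum disjoint conservativeness. We may assume $a_n\ge2$ for infinitely many $n$, since otherwise $\cF_{(a_n)}$-minimum partial greediness coincides with strong partial greediness by Lemma~\ref{lemmastrong} and conditional strong partially greedy non-democratic bases are classical.

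The heart of the matter is the placement of the blocks. Write $b_n=\sum_{i\le n}a_i$, $B^{*}=\{b_n:n\in\NN\}$ and $H=\NN\setminus B^{*}$; then for $A\neq\emptyset$ one has $A\in\mathcal{PF}_{(a_n)}$ if and only if $A-j\subseteq B^{*}$ for some $j\ge0$, and \eqref{between2} together with the boundedness of $(a_n)$ forces $|H\cap\{1,\dots,m\}|\le c\,m^{1/4}$ for a constant $c$, with $H$ infinite. I would take the blocks to be intervals straddling holes: choose a sufficiently fast increasing sequence $\widetilde h_1<\widetilde h_2<\cdots$ of elements of $H$ (with $\widetilde h_k$ bounded below by a fixed power of $k$ and, say, super-exponential gaps), and set $A_k:=\{\widetilde h_k,\widetilde h_k+1,\dots,\widetilde h_k+k-1\}$. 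Since $\widetilde h_k\in H$, the translate $A_k-j$ contains $\widetilde h_k$ whenever $0\le j\le k-1$, so $A_k\subseteq F_{j,\ell}$ forces $j\ge k$; a short case distinction ($A_1=\{\widetilde h_1\}\not\subseteq F_{j,\ell}$ for $j\ge\widetilde h_1$, and $A_{j+1}\not\subseteq F_{j,\ell}$ for $0\le j\le\widetilde h_1-1$) then yields $D_s:=\bigcup_{k\le s}A_k\notin\mathcal{PF}_{(a_n)}$ for every $s\ge\widetilde h_1$.

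Granting the construction, the remaining checks are routine. Each of the three norms is monotone, so $\BB$ is a normalized monotone (hence Schauder) basis. Quasi-greediness follows as in Theorem~\ref{propunbounded}, since $\|\cdot\|_{\circ}$ is $1$-suppression unconditional and $\max\{\|\cdot\|_{\triangleleft},\|\cdot\|_{\ell_2}\}$ is quasi-greedy by Lemma~\ref{lemmakt}. Conditionality is obtained by testing on $x_m=\sum_{j\le m}e_j/j^{1/2}$ and $y_m=\sum_{j\le m}(-1)^j e_j/j^{1/2}$: one has $\|x_m\|\ge\|x_m\|_{\triangleleft}\approx\log m$, while $\|y_m\|=\max\{\|y_m\|_{\ell_2},\|y_m\|_{\triangleleft},\|y_m\|_{\circ}\}$ with $\|y_m\|_{\ell_2}\approx\log^{1/2}m$, $\|y_m\|_{\triangleleft}=O(1)$, and $\|y_m\|_{\circ}$ bounded because the $\widetilde h_k$ grow super-exponentially; hence $\|x_m\|/\|y_m\|\to\infty$. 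Non-democracy comes from the re-indexing identity $\|1_{D_s}\|\ge\|1_{D_s}\|_{\circ}=\sum_{j=1}^{|D_s|}j^{-1/q}\gtrsim|D_s|^{1/p}$, compared with $\|1_F\|\approx|F|^{1/2}$ for $F\in\cF_{(a_n)}$ of the same cardinality. Finally, by Theorem~\ref{bcal4} and quasi-greediness, $\cF_{(a_n)}$-minimum disjoint conservativeness reduces (via $\|1_A\|_{\triangleleft}\le 2|A|^{1/2}$ and $\|1_B\|\ge|B|^{1/2}$) to the single estimate $\|1_A\|_{\circ}\le C|A|^{1/2}$ for every $A\in\mathcal{PF}_{(a_n)}$.

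This last estimate is the main obstacle. For $A\subseteq F_{j,\ell}$, set $t_k=|A\cap A_k|$, so that $\|1_A\|_{\circ}\lesssim\sum_k t_k\,k^{-2/q}$ and $|A|\ge\sum_k t_k$; the worst case for the sum is the ``square'' configuration $t_k=k$ for all small $k$, which would give $\approx|A|^{1/p}\gg|A|^{1/2}$, but this is precisely $A\supseteq D_s$ with $s$ large, which is excluded since $D_s\notin\mathcal{PF}_{(a_n)}$. The remaining task --- showing that, once this extremal configuration is ruled out, the surviving full and partial pickups still sum to $O(|A|^{1/2})$ --- is where \eqref{between2} (and not just the boundedness of $(a_n)$) is essential: it forces every $\cF_{(a_n)}$-set of size $\ell$ into an interval of length $\ell+O(\ell^{1/4})$, so that $A$ reaching its largest block $A_K$ forces $\ell$, hence $\ell^{1/2}$, to dominate a fixed power of $K$, while the hole-straddling and the super-exponential gaps limit which blocks a single $\cF_{(a_n)}$-set can absorb; the exponent bound $1-\tfrac2q\le\tfrac12$ then closes the computation. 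Making this bookkeeping precise, in particular for sets $A$ that are small but spread over many blocks, is where the substantive work lies.
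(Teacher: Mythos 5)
Your plan is to transplant the KT--style construction from Theorem~\ref{propunbounded}: keep $\|\cdot\|_{\ell_2}$ as a full component of the norm (so $\|1_B\|\geqslant|B|^{1/2}$ for every finite $B$), add the KT seminorm $\|\cdot\|_{\triangleleft}$ for conditionality, and add a weighted $\ell_1$-type seminorm $\|\cdot\|_\circ$ supported on sparse blocks $A_k$, so that democracy fails \emph{from above}: certain unions $D_s$ of blocks would have norm $\approx|D_s|^{1/p}\gg|D_s|^{1/2}$. The paper's construction is the opposite in spirit: it does \emph{not} include $\|\cdot\|_{\ell_2}$, but rather $\|\cdot\|_{\ell_\infty}$ plus restricted $\ell_2$-seminorms on two special kinds of sets --- the hole sets $(j+I_{r_\ell})\setminus F_{j,\ell}$ and the ``spread-out'' sets $A$ with $|A|^2<A$ --- plus a family of translated KT seminorms $\|\cdot\|_{\circ,m}$. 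There democracy fails \emph{from below}: intervals $I_{m_0}$ have norm only $\approx m_0^{1/4}$, while $m_0^2+I_{m_0}$ has norm $\geqslant m_0^{1/2}$. These are genuinely different routes.

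The trouble is that your route inherits the essential mechanism of Theorem~\ref{propunbounded}, which relies on Claim~\ref{claim3}: for an \emph{unbounded} $(a_n)$ the blocks $A_k$ can be isolated inside the large gaps, so that any $A\in\mathcal{PF}_{(a_n)}$ meets at most one block. With bounded gaps there are no such free zones, and that claim has no analogue. Your replacement --- straddle a hole $\widetilde h_k\in H$ with each $A_k$ --- only rules out the single configuration $A\supseteq D_s$ (since $j\geqslant\widetilde h_1$ kills $A_1$ and $0\leqslant j\leqslant\widetilde h_1-1$ kills $A_{j+1}$). It does not rule out $A\supseteq D_s\setminus A_{j_0+1}$ for a fixed $j_0$, nor $A$ containing a positive proportion of the blocks $A_1,\dots,A_K$, and these already have $\|1_A\|_\circ\approx|A|^{1/p}\gg|A|^{1/2}$. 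Indeed, for a fixed shift $j$ the containment $A_k\subseteq F_{j,\ell}$ only requires the length-$k$ interval $[\widetilde h_k-j,\,\widetilde h_k-j+k-1]$ to lie inside $B^*=\{b_n:n\in\NN\}$; since by \eqref{between2} the complement $H$ has density $O(x^{-3/4})$ near $x$, for most $j$ in the admissible range this holds for most $k$ simultaneously, so $F_{j,\ell}$ can absorb many full blocks. The constraint $D_s\notin\mathcal{PF}_{(a_n)}$ is therefore far weaker than the estimate $\|1_A\|_\circ\leqslant C|A|^{1/2}$ for all $A\in\mathcal{PF}_{(a_n)}$ that you need; I don't see how to obtain the latter from super-exponential growth of the $\widetilde h_k$ alone, and you yourself flag this as unfinished ``bookkeeping.'' This is not bookkeeping --- it is the heart of the argument, and it is precisely the point where the paper abandons the KT-from-above strategy in favor of the restricted-$\ell_2$-from-below strategy, which avoids the problem because it never asks for an upper bound on $\|1_A\|$ over all $A\in\mathcal{PF}_{(a_n)}$ of the form $|A|^{1/2}$; it only needs the carefully targeted lower bounds coming from the hole seminorms and the $|A|^2<A$ seminorms, and those are exactly what \eqref{between2} controls.

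A secondary (smaller) point: once you include $\|\cdot\|_{\ell_2}$ as a full component of the norm, the $\cF_{(a_n)}$-minimum disjoint conservative check reduces entirely to $\|1_A\|_\circ\lesssim|A|^{1/2}$ on $\mathcal{PF}_{(a_n)}$, so the whole proof stands or falls with the unproven estimate. The rest of your outline (monotone Schauder, quasi-greediness via Lemma~\ref{lemmakt}, conditionality via $x_m$ and $y_m$, non-democracy via $\|1_{D_s}\|_\circ\gtrsim|D_s|^{1/p}$) is fine but conditional on that gap being closed, which I do not believe it can be with the construction as stated.
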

\begin{proof}
Let $M:=\max_{n\in \NN}a_n$ and $M_1:=\max\{M, \ceil{\alpha}\}$. Let us define seminorms on $c_{00}$ as follows: for each $\ell\in \NN$, let $r_\ell:=\max F_{0,\ell}$, and for $j\in \NN_0$ and $\ell\in \NN$, let 
$$
\left\| x=(x_n)_{n\in \NN}\right\|_{j,\ell}\ :=\ \|P_{(j+ I_{r_\ell})\setminus F_{j,\ell}}(x)\|_{\ell_2}=\left(\sum_{n\in \{j+1,\dots, j+r_\ell\}\setminus F_{j,\ell}}|x_n|^2\right)^{1/2}. 
$$
Now for each $A\in \NN^{<\infty}$ such that $|A|^2<A$, set 
\begin{equation*}
\|x\|_{A}\ :=\ \|P_A(x)\|_{\ell_2}. 
\end{equation*}
Finally, for each $m\in \NN$, set 
\begin{equation*}
\|x\|_{\circ, m}\ :=\ \max_{1\leqslant k\leqslant m}\left| \sum_{n=1}^{k}\frac{x_{m^2+n}}{n^{1/2}} \right| .
\end{equation*}
Now let 
\begin{align*}
\|x\|\ :=\ &\max\left\{ \|x\|_{\ell_{\infty}}, \sup_{\substack{A\in \NN^{<\infty}\\|A|^2<A}}\|x\|_{A},\sup_{m\in \NN}\|x\|_{\circ,m}, \sup_{j\in \NN_0,\ell\in \NN}\|x\|_{j,\ell}\right\}. 
\end{align*}
Let $\BB$ be the canonical vector basis of $X$. It is routine to check that $\BB$ is a monotone normalized Schauder basis. 

\textbf{Step 1:} To prove that the basis is quasi-greedy, we need only to bound the seminorms $\|\cdot\|_{\circ,m}$ of projections onto greedy sets. To do so, pick $x\in X$, $A\in\NN^{<\infty}$ a greedy set of $x$, and $m\in \NN$. Let $B:=A\cap (m^2+ I_{m})$ and $y:=P_{m^2+ I_{m}}(x)$.  We may assume $B\not=\emptyset$. Then $B$ is a greedy set of $y$, and 
\begin{equation*}
\|y\|_{\circ,m}\ =\ \|x\|_{\circ, m}\ \leqslant\ \|x\|\mbox{ and }\|P_A(x)\|_{\circ,m}\ =\ \|P_B(y)\|_{\circ,m}. 
\end{equation*}
Let $\cY=(\yy_n)_{n\in \NN}$ be the basis of Lemma~\ref{lemmakt}, $C_q$ be its quasi-greedy constant, and $\|\cdot\|_{\triangleleft}$ be the norm of the space. Define
$$
z\ :=\ \sum_{n=1}^m e^*_{m^2+n}(x)\yy_n\mbox{ and }D\ :=\ B-m^2. 
$$
Then $D$ is a greedy set of $z$, and 
\begin{align*}
\|P_A(x)\|_{\circ,m}&\ =\ \|P_B(y)\|_{\circ,m}\ =\ \max_{1\leqslant k\leqslant m} \left| \sum_{\substack {m^2+n\in B\\ 1\leqslant n\leqslant k}}\frac{e^*_{m^2+n}(x)}{n^{1/2}}\right|\ =\ \max_{1\leqslant k\leqslant m} \left| \sum_{\substack {n\in D\\ 1\leqslant n\leqslant k}}\frac{\yy_n^*(z)}{n^{1/2}}\right|\\
&\ \leqslant\  \left\| P_{D}(z)\right\|_{\triangleleft}\ \leqslant\ C_q\|z\|_{\triangleleft}\ =\ C_q\max\{\|y\|_{\circ,m},\|y\|_{m^2+ I_{m}}\}\\
&\ =\ C_q\max\{\|x\|_{\circ,m},\|x\|_{m^2+ I_{m}}\}\ \leqslant\ C_q\|x\|,
\end{align*}
so $\BB$ is $C_q$-quasi-greedy. 

\textbf{Step 2:} Next, we prove that $\BB$ is conditional. For $m\in \NN$, let 
$$
z_m\ :=\ \sum_{n=1}^{m}\frac{e_{m^2+n}}{n^{1/2}}\mbox{ and }u_m\ :=\ \sum_{n=1}^{m}(-1)^n \frac{e_{m^2+n}}{n^{1/2}}. 
$$
Then 
$$
\|z_m\|\ \geqslant\ \|z_m\|_{\circ,m}\ =\ \sum_{n=1}^{m}\frac{1}{n}. 
$$
On the other hand, given that the coefficients of $u_m$ alternate in signs and are decreasing, for every $\ell\in \NN$, we have $\|u_m\|_{\circ,\ell}\leqslant 1$. 
Given that all of the other seminorms are bounded by the $\ell_2$-norm, we obtain 
\begin{equation*}
\|u_m\|\ \leqslant\ \left(\sum_{n=1}^{m}\frac{1}{n}\right)^{1/2},
\end{equation*}
from which conditionality follows. 

\textbf{Step 3:} Now let us see that $\BB$ is $\cF_{(a_n)}$-minimum disjoint superconservative. We start by establishing several key inequalities for general sets $A$ and signs $\varepsilon$. First, since
 $$\sum_{n=1}^m\frac{1}{n^{1/2}}\ \leqslant\ 2m^{1/2},\mbox{ for all } m\in \NN,$$ it follows that
\begin{equation}
\|1_{\varepsilon, A}\|\ \leqslant\ 2 |A|^{1/2}, \mbox{ for all } A\in \NN^{<\infty} \mbox{ and signs } \varepsilon.\label{upperbound1A}
\end{equation}
On the other hand, 
\begin{equation}
\|1_{\varepsilon, A}\|\ \geqslant\ |A|^{1/2}, \mbox{ for all }A\in \NN^{<\infty} : |A|^2<A\mbox{ and signs }\varepsilon.\label{lowerbound1A}
\end{equation}

\begin{claim}
Given $A\in \NN^{<\infty}$ with $|A|\geqslant 4$ and any sign $\varepsilon$,
\begin{equation}\label{bound1/4}\|1_{\varepsilon, A}\|\ \geqslant\ \frac{1}{2}|A|^{1/4}.\end{equation}
\end{claim}

\begin{proof}
Write $A = A_1\sqcup A_2$, where $A_2$ consists of the largest $\ceil{(|A|-1)/2}$ elements of $A$. Then 
$$A_2 \ >\ \max A_1 \ \geqslant\ |A_1| \ =\ |A| - \ceil{\frac{|A|-1}{2}}\ \geqslant\ \frac{|A|}{2}.$$
Let $A_0$ be the set consisting of the largest $\floor {(|A|/2)^{1/2}}$ elements of $A$. Since
$$\left(\frac{|A|}{2}\right)^{1/2} \ \leqslant\ \frac{|A|-1}{2}, \mbox{ for }|A|\ \geqslant\ 4,$$
we deduce that $A_0\subset A_2$ and thus,
$$A_0 \ >\ \frac{|A|}{2} \ \geqslant\ \floor {\left(\frac{|A|}{2}\right)^{1/2}}^{2}\ =\ |A_0|^2.$$
Hence,
$$\|1_{\varepsilon, A}\|\ \geqslant\ \|1_{\varepsilon, A}\|_{A_0}\ =\ \floor {\left(\frac{|A|}{2}\right)^{1/2}}^{1/2}\ \geqslant\ \frac{1}{2}|A|^{1/4},$$
as desired.
\end{proof}

Also, note that given $A\in \NN^{<\infty}$ and a sign $\varepsilon$, for each $m\in \NN$,  we have
\begin{equation}
\left\| 1_{\varepsilon, A}\right\|_{\circ,m}\ \leqslant\ \sum_{n=1}^{|A\cap (m^2+ I_{m})|}\frac{1}{n^{1/2}}\ \leqslant\ 2 |A\cap (m^2+ I_{m})|^{1/2}\ =\ 2\|1_{\varepsilon, A}\|_{m^2+ I_{m}}. \label{boundfortheroots}
\end{equation}

We are now ready to prove the $\mathcal{F}_{(a_n)}$-minimum disjoint superconservative property. Fix $A, B\in \NN^{<\infty}$, signs $\varepsilon, \varepsilon'$, and $j_0\in \NN_0$, $\ell_0\in \NN$ such that 
$|A|\leqslant |B|$, $A\subset F_{j_0,\ell_0}\subset B^{c}$, and $j_0+a_1<B$. Let $m_2=|B|$. 

Case 1: If $m_2\leqslant 2M_1$, then 
$$
\|1_{\varepsilon, A}\|\ \leqslant\ (2M_1)^{1/p}c_2^2\|1_{\varepsilon',B}\|. 
$$

Case 2: If $m_2>2M_1$, set
$$
B_1\ :=\ \{n\in B: n>m_2^2\}\mbox{ and }B_{2}\ :=\ B\cap (j_0+ I_{r_{\ell_0}}). 
$$
If $|B_1|\geqslant m_2/4$, then  by \eqref{upperbound1A},
$$
\|1_{\varepsilon',B}\|\ \geqslant\ \|1_{\varepsilon',B}\|_{B_1}\ \geqslant\ \left(\frac{m_2}{4}\right)^{1/2}\ \geqslant\ \left(\frac{|A|}{4}\right)^{1/2}\ \geqslant\ \frac{\|1_{\varepsilon, A}\|}{4}.
$$
If $|B_2|\geqslant m_2/4$, then due to $B\cap F_{j_0,\ell_0}=\emptyset$ and \eqref{upperbound1A},
$$
\|1_{\varepsilon',B}\|\ \geqslant\ \|1_{\varepsilon',B}\|_{j_0,\ell_0}\ \geqslant\ \left(\frac{m_2}{4}\right)^{1/2}\ \geqslant\ \left(\frac{|A|}{4}\right)^{1/2}\ \geqslant\ \frac{\|1_{\varepsilon, A}\|}{4}.
$$
If $\max\{|B_1|, |B_2|\} <  m_2/4$, then $j_0+r_{\ell_0} < m_2^2$. 
Indeed, if $j_0 + r_{\ell_0} \geqslant m_2^2$, then $|B_1| + |B_2|\geqslant |B|$ and so,
$$\frac{m_2}{4}\ >\ |B_1| \ \geqslant\ |B\backslash B_2|\ =\  |B| -  |B_2|\ >\ m_2 - \frac{m_2}{4} \ =\ \frac{3m_2}{4},$$
a contradiction. That $j_0 + r_{\ell_0} \leqslant m_2^2$ implies that $A\subset I_{m_2^2}$. 

Pick $j\in \mathbb{N}_0$, $\ell\in \mathbb{N}$. We have 

\begin{align*}
\left| A\cap ((j+ I_{r_\ell})\setminus  F_{j,\ell})\right|&\ \leqslant\ \left| I_{m^2_2}\cap ((j + I_{r_\ell})\setminus  F_{j,\ell})\right|\\
&\ \leqslant\ \left| I_{m^2_2}\cap (I_{r_\ell}\setminus  F_{0,\ell})\right|.
\end{align*}
\begin{enumerate}
\item Case 2.1: If $m_2^2 \geqslant r_\ell$, then by \eqref{between2},
\begin{align*}
    \left| A\cap ((j+ I_{r_\ell})\setminus  F_{j,\ell})\right|&\ \leqslant\ \left| I_{r_\ell}\setminus  F_{0,\ell}\right|\\
    &\ \leqslant\ M|\{1\leqslant n\leqslant \ell: a_n\geqslant 2\}|\\
    &\ \leqslant\ MM_1 r_\ell^{1/4}\ \leqslant\ M_1^2 m_2^{1/2}.
\end{align*}

\item Case 2.2: If $m_2^2 < r_\ell$, then let $\ell_s$ be the largest such that $r_{\ell_s}\leqslant m_2^2$ to have
\begin{align*}
    \left| A\cap ((j+ I_{r_\ell})\setminus  F_{j,\ell})\right|&\ \leqslant\ \left| I_{m_2^2}\setminus  F_{0,\ell}\right|\\
    &\ \leqslant\ M|\{1\leqslant n\leqslant \ell_s: a_n\geqslant 2\}| + M\\
    &\ \leqslant\ M(M_1r^{1/4}_{\ell_s} + 1)\ \leqslant\ 2M_1^2m_2^{1/2}.
\end{align*}
\end{enumerate}
Hence, for all $j\in \mathbb{N}_0$, $\ell\in \mathbb{N}$, \eqref{upperbound1A} and \eqref{bound1/4} give
\begin{equation}
\left\| 1_{\varepsilon, A}\right\|_{j,\ell}\ \leqslant\ (4M_1^2m_2^{1/2})^{1/2}\ \leqslant\ 2M_1 m_2^{1/4}\ \leqslant\ 4M_1\|1_{\varepsilon',B}\|. \label{boundjl}
\end{equation}

Now let $$B_3\ :=\ \{n\in B\, :\, j_0+r_{\ell_0} < n\leqslant m_2^2\}.$$
Note that $B_3=B\setminus (B_1\cup B_2)$. Since $\max\{|B_1|, |B_2|\}\leqslant m_2/4$, it follows that $|B_3|\geqslant m_2/2$. Let $(A_j)_{1\leqslant j\leqslant 8}$ be a partition of $A$ with $|A_j|\leqslant |B_3|$ for all $1\leqslant j\leqslant 8$ (some of the $A_j$'s might be empty). Given that $A_j<B_3$ for all $1\leqslant j\leqslant 8$, we have
\begin{equation}
\sup_{\substack{D\in \NN^{<\infty}\\|D|^2<D}}\|1_{\varepsilon, A}\|_{D}\ \leqslant\ 8\max_{1\leqslant j\leqslant 8}\sup_{\substack{D\in \NN^{<\infty}\\|D|^2<D}}\|1_{\varepsilon, A_j}\|_{D} \ \leqslant\ 8\sup_{\substack{D\in \NN^{<\infty}\\|D|^2<D}}\|1_{\varepsilon', B_3}\|_{D}\ \leqslant\ 8C_q\|1_{\varepsilon',B}\|. \label{boundforcircm}
\end{equation}
 Combining \eqref{boundfortheroots}, \eqref{boundjl}, and \eqref{boundforcircm} we get 
 \begin{equation*}
 \|1_{\varepsilon,A}\|\ \leqslant\ 16C_qM_1\|1_{\varepsilon',B}\|. 
 \end{equation*}
 Therefore, $\BB$ is $16C_qM_1$-$\cF_{(a_n)}$-minimum disjoint superconservative.

\textbf{Step 4:} To complete the proof of the lemma, it remains to show that $\BB$ is not democratic. To that end, fix $m_0\in \NN_{>2M_1}$, and let us compare the norms of $1_{I_{m_0}}$ and $1_{m_0^2+I_{m_0}}$. By \eqref{lowerbound1A}, 
$$
 \|1_{m_0^2+I_{m_0}}\|\ \geqslant\  m_0^{1/2}. 
$$
Now pick $A\in \NN^{<\infty}$ so that $|A|^2<A$ and $A\cap I_{m_0}\neq \emptyset$. Then $|A|^2\leqslant m_0$, so 
$$
 \left\| 1_{I_{m_0}}\right\|_{A}\ \leqslant\ |A|^{1/2}\ \leqslant\ m_0^{1/4}. 
$$
Taking supremum and considering \eqref{boundfortheroots}, we obtain 
$$
\max\left\{ \left\| 1_{I_{m_0}}\right\|_{\ell_{\infty}}, \sup_{\substack{A\in \NN^{<\infty}\\|A|^2<A}} \left\| 1_{I_{m_0}}\right\|_{A},\sup_{m\in \NN} \left\| 1_{I_{m_0}}\right\|_{\circ,m} \right\}\ \leqslant\ 2 m_0^{1/4}.
$$
Fix $j_0\in \NN_0$ and $\ell_0\in \NN$ so that 
$$
\sup_{\substack{j\in \NN_0\\\ell\in \NN}}\left\| 1_{I_{m_0}}\right\|_{j,\ell}\ =\ \left\| 1_{I_{m_0}}\right\|_{j_0,\ell_0},
$$
and $\ell_0$ is the minimum $\ell\in \NN$ for which the supremum is attained. Since the supremum is not attained for $j_0,\ell_0-1$, it follows that $m_0\geqslant j_0+r_{\ell_0}-M$, so 
$$j_0+r_{\ell_0}\ \leqslant\ m_0+M\ \leqslant\ 2m_0.$$
Then by \eqref{between2}, 
$$
\left| I_{m_0}\cap ((j_0+ I_{r_{\ell_0}})\setminus F_{j_0,\ell_0})\right| \ \leqslant\ M \left| \left \{ 1\leqslant n\leqslant \ell_0: a_n\geqslant 2\right\} \right| \ \leqslant\ M_1^2r^{1/4}_{\ell_0};
$$
hence,
$$
\left\| 1_{I_{m_0}}\right\|_{j_0,\ell_0}\ \leqslant\ 2M_1m_0^{1/8}.
$$
Combining the above inequalities, we conclude that
$$
\frac{ \left\| 1_{I_{m_0}}\right\|}{\left\| 1_{m_0^2+ I_{m_0}}\right\|}\ \leqslant\ \frac{2M_1}{m_0^{1/4}}. 
$$
As $m_0$ is arbitrary, this proves that $\BB$ is not democratic. 
\end{proof}

\begin{rek}\label{remarkuncond}\normalfont Under the hypotheses of Lemma~\ref{lemmabetween}, we can easily substitute unconditionality for conditionality by removing the seminorms $\|x\|_{\circ, m}$ from our construction. 
\end{rek}

\begin{proof}[Proof of Theorem \ref{corollarystrict}]
Just combine Lemmas \ref{lemmampg->spg}, \ref{lemmastrong}, and \ref{lemmabetween}. 
\end{proof}

\section{Appendix}

Given a family $\mathcal{F}$, which does not contain the empty set, let $\mathcal{A} = \{(x, m, \Lambda_m(x))\in X\times \mathbb{N}\times \mathbb{N}^{<\infty}: \exists F\in \mathcal{F}, |F|\leqslant m, F\backslash \Lambda_m(x) < \Lambda_m(x)\backslash F\}$. Consider the following condition: there exists $\Delta \geqslant 1$ such that for all $(x, m, \Lambda_m(x))\in \mathcal{A}$,
\begin{equation}\label{e300}\|x-P_{\Lambda_m(x)}(x)\|\ \leqslant\ \Delta \inf_{\substack{F\in \mathcal{F}, |F|\leqslant m\\ F\backslash \Lambda_m(x) < \Lambda_m(x)\backslash F}}\|x-P_F(x)\|.\end{equation}
\begin{lem}\label{pempty}
The condition \eqref{e300} is equivalent to the following condition: there exists $C \geqslant 1$ such that 
\begin{equation}\label{e301}\|x-P_{\Lambda_m(x)}(x)\|\ \leqslant\ C\inf_{\substack{F\in \mathcal{F}\cup \{\emptyset\}, |F|\leqslant m\\ F\backslash \Lambda_m(x) < \Lambda_m(x)\backslash F}}\|x-P_F(x)\|, \mbox{ for all }x\in X, m\geqslant 0, \mbox{ and }\Lambda_m(x).\end{equation}
\end{lem}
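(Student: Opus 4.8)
The plan is to prove the two implications separately; all of the content is in \eqref{e300}$\Rightarrow$\eqref{e301}, which I would reduce to the statement that \eqref{e300} already forces suppression quasi-greediness. The implication \eqref{e301}$\Rightarrow$\eqref{e300} is immediate: for a triple $(x,m,\Lambda_m(x))\in\mathcal{A}$, the index set over which the infimum in \eqref{e300} runs is a subcollection of the one in \eqref{e301} (the latter also admitting $F=\emptyset$), so the infimum in \eqref{e300} is no smaller, and \eqref{e301} with constant $C$ yields \eqref{e300} with $\Delta=C$.

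For the converse, assume \eqref{e300} with constant $\Delta$. I would first reduce to showing that $\BB$ is $C_0$-suppression quasi-greedy for some $C_0$. Granting that: \eqref{e301} is trivial for $m=0$; for $m\geqslant 1$, since $\emptyset$ always satisfies the constraints appearing there, the right-hand side of \eqref{e301} equals $C\min\{\|x\|,J(x,m)\}$, where $J(x,m)$ denotes the infimum of $\|x-P_F(x)\|$ over all $F\in\mathcal{F}$ with $|F|\leqslant m$ and $F\setminus\Lambda_m(x)<\Lambda_m(x)\setminus F$ (and $J(x,m):=+\infty$ if no such $F$ exists). Suppression quasi-greediness bounds $\|x-P_{\Lambda_m(x)}(x)\|$ by $C_0\|x\|$, and whenever $J(x,m)<\infty$ --- equivalently the triple lies in $\mathcal{A}$ --- \eqref{e300} bounds it by $\Delta\, J(x,m)$; hence \eqref{e301} holds with $C=\max\{\Delta,C_0\}$.

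It therefore remains to deduce suppression quasi-greediness from \eqref{e300}. Fix once and for all a set $F_0\in\mathcal{F}$ with $|F_0|=\min\mathcal{F}=:k_0$. Given $x\in X$, $m\geqslant 1$, and a greedy set $\Lambda:=\Lambda_m(x)$, choose $t>\sup_n|e_n^*(x)|$, let $\varepsilon$ agree with $\sgn(e_n^*(x))$ on $F_0$, and set $y:=x+1_{\varepsilon,F_0}\cdot t$, i.e.\ $y:=x+\sum_{n\in F_0}t\varepsilon_n e_n$. By the choice of $t$ and $\varepsilon$, the coefficients of $y$ on $F_0$ have modulus at least $t$ and those off $F_0$ have modulus strictly less than $t$, so $F_0$ is exactly the set of the $k_0$ largest coefficients of $y$; moreover, since $\Lambda$ is a greedy set for $x$, every index in $\Lambda\setminus F_0$ has a (weakly) larger $y$-coefficient than any index outside $\Lambda\cup F_0$. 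Hence $F_0\cup\Lambda$ is a valid choice of greedy set of $y$ of cardinality $m':=|F_0\cup\Lambda|$. Because $F_0\subseteq F_0\cup\Lambda$, the order condition $F_0\setminus(F_0\cup\Lambda)<(F_0\cup\Lambda)\setminus F_0$ holds vacuously and $|F_0|=k_0\leqslant m'$, so $(y,m',F_0\cup\Lambda)\in\mathcal{A}$ with $F_0$ admissible for its infimum. A short computation gives $y-P_{F_0\cup\Lambda}(y)=x-P_{F_0\cup\Lambda}(x)$ and $y-P_{F_0}(y)=x-P_{F_0}(x)$, whence \eqref{e300} gives $\|x-P_{F_0\cup\Lambda}(x)\|\leqslant\Delta\|x-P_{F_0}(x)\|$. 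Writing $x-P_\Lambda(x)=\bigl(x-P_{F_0\cup\Lambda}(x)\bigr)+P_{F_0\setminus\Lambda}(x)$, applying the $p$-triangle inequality $\|a+b\|^p\leqslant\|a\|^p+\|b\|^p$, and using the crude estimates $\|P_{F_0}(x)\|,\|P_{F_0\setminus\Lambda}(x)\|\leqslant k_0^{1/p}c_2^2\|x\|$, I conclude
\[
\|x-P_{\Lambda_m(x)}(x)\|^p\ \leqslant\ \bigl(\Delta^p(1+c_2^{2p}k_0)+c_2^{2p}k_0\bigr)\,\|x\|^p ,
\]
a bound with a constant depending only on $p$, $c_2$, $\Delta$ and $k_0=\min\mathcal{F}$ (the case $m=0$ being trivial).

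The one step that requires genuine care is checking that $F_0\cup\Lambda$ is a \emph{legitimate} greedy set of $y$: one must make the reinforced block on $F_0$ strictly dominate everything else and verify that the internal ordering of the coefficients of $\Lambda$ is undisturbed, so that ties (if any) can be resolved in favour of $F_0\cup\Lambda$. The conceptual point that makes the resulting constant uniform in $m$ --- and hence makes the whole reduction work --- is that we perturb $x$ only along the single fixed set $F_0$, so every error term incurs just the harmless factor $k_0^{1/p}$.
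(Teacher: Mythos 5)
Your proof is correct, and it takes a genuinely different route from the paper's in the only nontrivial direction, the deduction of quasi-greediness from \eqref{e300}. The paper fixes $G\in\mathcal{F}$ with $|G|=\min\mathcal{F}$, splits into the cases $m\leqslant 2\max G$ (trivial crude bound) and $m>2\max G$, and in the latter case decomposes $\Lambda_m(x)$ as $\Lambda_1\sqcup\Lambda_2$ with $\Lambda_1=\Lambda_m(x)_{\leqslant\max G}$, $\Lambda_2=\Lambda_m(x)_{>\max G}$; it then notes that $\Lambda_2$ is a greedy set of $y:=x-P_{\Lambda_1}(x)$ with $|\Lambda_2|\geqslant|G|$ and $G\setminus\Lambda_2=G<\Lambda_2=\Lambda_2\setminus F$, so \eqref{e300} applies to $(y,|\Lambda_2|,\Lambda_2)$ with $F=G$. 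You instead perturb $x$ along the fixed set $F_0$: by inflating those coefficients you force $F_0\cup\Lambda$ to be a greedy set of $y$, and then you exploit the fact that $F_0\subseteq F_0\cup\Lambda$ makes the ordering constraint $F_0\setminus(F_0\cup\Lambda)<(F_0\cup\Lambda)\setminus F_0$ vacuous, which removes any need for a case split on $m$. Both arguments are sound; yours is cleaner in that it avoids the dichotomy and produces a constant depending only on $k_0=\min\mathcal{F}$ (rather than on $\max G$ for the chosen $G$), at the price of introducing the auxiliary perturbation $y=x+t\,1_{\varepsilon,F_0}$ and checking that $F_0\cup\Lambda$ is a legitimate greedy set of $y$, which you do carefully. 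One small stylistic remark: the worry about "resolving ties in favour of $F_0\cup\Lambda$" is not really an issue here, since you only need $F_0\cup\Lambda$ to be \emph{some} valid greedy set of $y$ of its cardinality, and the inequality $\min_{n\in F_0\cup\Lambda}|e_n^*(y)|\geqslant\max_{n\notin F_0\cup\Lambda}|e_n^*(y)|$ follows directly from $t>\sup_n|e_n^*(x)|$ and the greediness of $\Lambda$ for $x$.
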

\begin{proof}
That \eqref{e301} implies \eqref{e300} is immediate. Let us assume \eqref{e300} and prove \eqref{e301}.  It suffices to prove that the basis is quasi-greedy. 
Choose $G\in \mathcal{F}$ with $|G| = \min \mathcal{F}$. Pick $x\in X$, $m\in\mathbb{N}$, and $\Lambda_m(x)$.

If $m\leqslant 2\max G$, then 
\begin{align*}
\|x-P_{\Lambda_m(x)}(x)\|^p\ \leqslant\ \|x\|^p + \|P_{\Lambda_m(x)}(x)\|^p&\ \leqslant\ \|x\|^p + 2c_2^{2p}\max G\|x\|^p\\
&\ =\ (1+2c_2^{2p}\max G)\|x\|^p.
\end{align*}

Suppose that $m > 2\max G$. Write $\Lambda_1 = \Lambda_m(x)_{\leqslant \max G}$ and $\Lambda_2 = \Lambda_m(x)_{>\max G}$. Observe that $\Lambda_2$ is a greedy set of $y:= x-P_{\Lambda_1}(x)$. Furthermore, 
$$|\Lambda_2| \ = \ |\Lambda_m(x)| - |\Lambda_1|\ >\ 2\max G - \max G \ =\ \max G\ \geqslant\ |G|, \mbox{ and}$$
$$G\backslash \Lambda_2 \ =\ G \ <\ \Lambda_2 \ =\ \Lambda_2\backslash G.$$
By \eqref{e300},
\begin{align*}
\|x-P_{\Lambda_m(x)}(x)\|^p\ =\ \|y-P_{\Lambda_2}(y)\|^p\ \leqslant\ \Delta^p\|y-P_G(y)\|^p&\ \leqslant\ \Delta^p(\|y\|^p + \|P_G(y)\|^p)\\
&\ \leqslant\ \Delta^p(\|y\|^p + |G|c_2^{2p}\|y\|^p)\\
&\ =\ \Delta^p(1+|G|c_2^{2p})\|y\|^p.
\end{align*}
On the other hand,
$$\|y\|^p\ \leqslant\ \|x\|^p + \|P_{\Lambda_1}(x)\|^p\ \leqslant\ \|x\|^p + c_2^{2p}\max G \|x\|^p\ =\ (1+c_2^{2p}\max G)\|x\|^p.$$
Combining the above inequalities, we obtain
$$\|x-P_{\Lambda_m(x)}(x)\|^p\ \leqslant\ \Delta^p(1+|G|c_2^{2p})(1+c_2^{2p}\max G)\|x\|^p.$$
Hence, the basis is $K$-suppression quasi-greedy with
$$K\ \leqslant\ \max\{(1+2c_2^{2p}\max G)^{1/p}, \Delta(1+|G|c_2^{2p})^{1/p}(1+c_2^{2p}\max G)^{1/p}\}.$$
\end{proof}

Given a family $\mathcal{F}$, which does not contain the empty set, let $\mathcal{D} = \{(x, m, \Lambda_m(x))\in X\times \mathbb{N}\times \mathbb{N}^{<\infty}: \exists F\in \mathcal{F}, |F|\leqslant m, \min F\leqslant \Lambda_m(x)\}$. Consider the following condition: there exists $\Delta \geqslant 1$ such that for all $(x, m, \Lambda_m(x))\in \mathcal{D}$,
\begin{equation}\label{e303}\|x-P_{\Lambda_m(x)}(x)\|\ \leqslant\ \Delta \inf_{\substack{F\in \mathcal{F}, |F|\leqslant m\\ \min F \leqslant \Lambda_m(x)}}\|x-P_F(x)\|.\end{equation}

\begin{lem}\label{memptyset}
The condition \eqref{e303} is equivalent to the following condition: there exists $C \geqslant 1$ such that 
\begin{equation}\label{e304}\|x-P_{\Lambda_m(x)}(x)\|\ \leqslant\ C\inf_{\substack{F\in \mathcal{F}\cup \{\emptyset\}, |F|\leqslant m\\ F = \emptyset \vee \min F \leqslant \Lambda_m(x)}}\|x-P_F(x)\|, \mbox{ for all }x\in X, m\geqslant 0, \mbox{ and }\Lambda_m(x).\end{equation}
\end{lem}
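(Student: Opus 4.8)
The plan is to follow almost verbatim the proof of Lemma~\ref{pempty}, replacing the ``disjoint order'' condition $F\setminus\Lambda_m(x)<\Lambda_m(x)\setminus F$ by the ``minimum'' condition $\min F\leqslant\Lambda_m(x)$ throughout. The direction \eqref{e304}$\Rightarrow$\eqref{e303} is immediate: for a triple $(x,m,\Lambda_m(x))\in\mathcal{D}$ the index set $\{F\in\mathcal{F}:|F|\leqslant m,\ \min F\leqslant\Lambda_m(x)\}$ over which the infimum in \eqref{e303} runs is nonempty and is contained in the index set $\{F\in\mathcal{F}\cup\{\emptyset\}:|F|\leqslant m,\ F=\emptyset\vee\min F\leqslant\Lambda_m(x)\}$ of \eqref{e304}, so the infimum in \eqref{e304} is no larger and \eqref{e303} holds with $\Delta=C$.

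For the converse I would first observe that the index set in \eqref{e304} always contains $\emptyset$ and $\|x-P_\emptyset(x)\|=\|x\|$; hence \eqref{e304} follows from \eqref{e303} together with $K$-suppression quasi-greediness, with $C=\max\{\Delta,K\}$. Thus the whole content of the lemma is to deduce quasi-greediness from \eqref{e303}. Fix $G\in\mathcal{F}$ with $|G|=\min\mathcal{F}$, and an arbitrary triple $(x,m,\Lambda_m(x))$ with $m\in\NN$. If $m\leqslant 2\max G$, then $\|x-P_{\Lambda_m(x)}(x)\|^p\leqslant\|x\|^p+\|P_{\Lambda_m(x)}(x)\|^p$ is controlled by $\|x\|$ via the crude bound $\|P_A(x)\|\leqslant|A|^{1/p}c_2^2\|x\|$. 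If $m>2\max G$, I would split $\Lambda_m(x)=\Lambda_1\sqcup\Lambda_2$ with $\Lambda_1:=\Lambda_m(x)_{\leqslant\max G}$ and $\Lambda_2:=\Lambda_m(x)_{>\max G}$, and set $y:=x-P_{\Lambda_1}(x)$; then $\Lambda_2$ is a greedy set of $y$, $x-P_{\Lambda_m(x)}(x)=y-P_{\Lambda_2}(y)$, and $|\Lambda_2|=m-|\Lambda_1|>\max G\geqslant|G|$.

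The one point where the present hypothesis enters differently from Lemma~\ref{pempty} is the verification that the new triple is admissible: since $\Lambda_2>\max G\geqslant\min G$ and $\min G\notin\Lambda_2$, we get $\min G<n$ for every $n\in\Lambda_2$, i.e. $\min G\leqslant\Lambda_2$, so $(y,|\Lambda_2|,\Lambda_2)\in\mathcal{D}$. Applying \eqref{e303} to this triple with $F=G$ yields $\|x-P_{\Lambda_m(x)}(x)\|=\|y-P_{\Lambda_2}(y)\|\leqslant\Delta\|y-P_G(y)\|\leqslant\Delta(1+|G|c_2^{2p})^{1/p}\|y\|$, and combining with $\|y\|^p\leqslant(1+c_2^{2p}\max G)\|x\|^p$ gives $K$-suppression quasi-greediness with $K\leqslant\max\{(1+2c_2^{2p}\max G)^{1/p},\ \Delta(1+|G|c_2^{2p})^{1/p}(1+c_2^{2p}\max G)^{1/p}\}$. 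I do not anticipate a genuine obstacle; the only step requiring a moment's care is precisely this membership check $\min G\leqslant\Lambda_2$, which is where the definition of the ``minimum'' variant is used in place of the order condition of Lemma~\ref{pempty}.
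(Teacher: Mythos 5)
Your proof is correct. Both directions are handled properly; in particular the membership check $\min G\leqslant\Lambda_2$ (which follows from $\Lambda_2>\max G\geqslant\min G$) is exactly the point where the ``minimum'' hypothesis replaces the disjoint-order hypothesis of Lemma~\ref{pempty}, and the rest is the same $p$-triangle-inequality bookkeeping.

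That said, your route differs from the paper's in a way worth noting. You transplant Lemma~\ref{pempty} verbatim: split $\Lambda_m(x)$ at $\max G$, so that $\Lambda_2$ is entirely disjoint from and to the right of $G$, and use the case split $m\leqslant 2\max G$ versus $m>2\max G$. The paper instead exploits the \emph{weaker} admissibility condition to split at $s=\min G$: it sets $\Lambda_1=\Lambda_m(x)_{<s}$, $\Lambda=\Lambda_m(x)_{\geqslant s}$, $y=x-S_{s-1}(x)$, for which $\min G\leqslant\Lambda$ holds trivially (no disjointness from $G$ is needed), and uses the case split $m\leqslant\min\mathcal{F}$ versus $m>\min\mathcal{F}$. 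The price the paper pays is a small extra step: since $|\Lambda|$ may be smaller than $|G|$, one extends $\Lambda$ to a greedy set $M$ of $y$ of cardinality $m$ with $M\geqslant s$ before invoking \eqref{e303}, and then absorbs $P_{M\setminus\Lambda}(x)$ using $|M\setminus\Lambda|=|\Lambda_1|\leqslant s-1$. What this buys is a quasi-greedy constant that depends on $\min G$ and $|G|=\min\mathcal{F}$ rather than on $\max G$, which can be dramatically smaller when $G$ is a sparse set with small minimum. Your argument is simpler and perfectly adequate for the equivalence as stated (the constant is not part of the claim), but it does not make use of the extra slack afforded by the minimum condition, which is precisely what distinguishes this lemma from Lemma~\ref{pempty}.
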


\begin{proof}
That \eqref{e304} implies \eqref{e303} is obvious. We prove the converse by assuming \eqref{e303} and proving that the basis is quasi-greedy. Take $G\in \mathcal{F}$ with $|G| = \min \mathcal{F}$. Pick $x\in X$, $m\in \mathbb{N}$, and $\Lambda_m(x)$. If $m\leqslant \min \mathcal{F}$, then 
$$\|x-P_{\Lambda_m}(x)\|^p\ \leqslant\ \|x\|^p + \|P_{\Lambda_m(x)}(x)\|^p\ \leqslant\ (1+ c_2^{2p}\min \mathcal{F})\|x\|^p.$$
Suppose that $m > \min \mathcal{F} = |G|$. Let $s = \min G, \Lambda = \Lambda_m(x)_{\geqslant s}$, and $y:= x- S_{s-1}(x)$. Then $\Lambda$ is a greedy set of $y$. Choose a set $M$ such that $\Lambda\subset M$, $M\geqslant s$, and $M$ is a greedy set of $y$ with $|M| = m$. By \eqref{e303},
$$\|x-S_{s-1}(x)-P_{\Lambda}(x)-P_{M\backslash \Lambda}(x)\|\ =\ \|y-P_{M}(y)\|\ \leqslant\ \Delta\|y-P_G(y)\|,$$
which, together with $|M\backslash \Lambda| \leqslant s-1$, gives 
\begin{align*}
&\|x-P_{\Lambda_m(x)}(x)\|^p\\
\ \leqslant\ &\|x-P_{\Lambda_m(x)}(x)-P_{[1,s-1]\backslash \Lambda_m(x)}(x)-P_{M\backslash \Lambda}(x)\|^p + \|P_{[1,s-1]\backslash \Lambda_m(x)}(x)+P_{M\backslash \Lambda}(x)\|^p\\
\ =\ &\|x-S_{s-1}(x)-P_{\Lambda}(x)-P_{M\backslash \Lambda}(x)\|^p + 2(s-1)c_2^{2p}\|x\|^p\\
\ \leqslant\ &\Delta^p\|y-P_G(y)\|^p + 2(s-1)c_2^{2p}\|x\|^p\\
\ \leqslant\ &\Delta^p\|y\|^p + \Delta^p\|P_G(y)\|^p + 2(s-1)c_2^{2p}\|x\|^p\\
\ \leqslant\ &\Delta^p(1+c_2^{2p}\min\mathcal{F})\|y\|^p  + 2(s-1)c_2^{2p}\|x\|^p.
\end{align*}
Note that $\|y\|^p\leqslant(1+(s-1)c_2^{2p})\|x\|^p$.
Combining the above inequalities, we obtain
$$\|x-P_{\Lambda_m(x)}(x)\|\ \leqslant\ (\Delta^p(1+c_2^{2p}\min\mathcal{F})(1+(s-1)c_2^{2p}) + 2(s-1)c_2^{2p})^{1/p}\|x\|.$$
Therefore, the basis is quasi-greedy. 
\end{proof}


\ \\
\end{document}